\crefname{equation}{}{}
\crefname{lemma}{Lemma}{Lemmas}
\crefname{theorem}{Theorem}{Theorems}
\crefname{discr}{Discretization}{Discretizations}
\newcommand{\dual}[1]{\langle {#1} \rangle}
\newcommand{\Dual}[1]{\left\langle {#1} \right\rangle}
\newcommand{\nm}[1]{\lVert {#1} \rVert}
\newcommand{\Nm}[1]{\left\lVert {#1} \right\rVert}
\newcommand{\ssnm}[1]
{
  \left\vert\kern-0.25ex
  \left\vert\kern-0.25ex
  \left\vert
  {#1}
  \right\vert\kern-0.25ex
  \right\vert\kern-0.25ex
  \right\vert
}
\begin{document}

\ensubject{fdsfd}

\ArticleType{ARTICLES}
\Year{2022}
\Month{}%
\Vol{}
\No{1}
\BeginPage{1} %
\DOI{10.1007/s11425-000-0000-0}
\ReceiveDate{May 9, 2021}
\AcceptDate{August 30, 2022}

\title[]{Numerical analysis of a Neumann boundary control problem with a
stochastic parabolic equation}{Numerical analysis of a Neumann boundary control problem with a
stochastic parabolic equation}

\author[1,2]{Qin Zhou}{{zhouqinmath@stu.scu.edu.cn}}
\author[1,$\ast$]{Binjie Li}{{libinjie@scu.edu.cn}}




\address[1]{School of Mathematics, Sichuan University, Chengdu {\rm610064}, China}
\address[2]{School of Mathematics, China West Normal University, Nanchong {\rm 637002}, China}

\abstract{
  This paper analyzes the discretization of a Neumann boundary control problem
  with a stochastic parabolic equation, where an additive noise occurs in the
  Neumann boundary condition. The convergence is established for general
  filtration, and the convergence rate $ O(\tau^{1/4-\epsilon} +
  h^{1/2-\epsilon}) $ is derived for the natural filtration of the $ Q $-Wiener
  process.}

\keywords{Neumann boundary control, stochastic parabolic
equation, $ Q $-Wiener process, boundary noise, discretization, convergence}

\MSC{65C30, 65K10}

\maketitle


\section{Introduction}
Let $ (\Omega, \mathcal F, \mathbb P) $ be a given complete probability space
with a normal filtration $ \mathbb F = \{\mathcal F_t\}_{t \geqslant 0 } $ (i.e.,
$ \mathbb F $ is right-continuous and $ \mathcal F_0 $ contains all $ \mathbb P
$-null sets in $ \mathcal F $). Assume that $ \mathcal O \subset \mathbb R^d $
($d=2,3$) is a convex polygonal domain with the boundary $ \Gamma $. Let $ W $
be a $ Q $-Wiener process in $ L^2(\Gamma) $ of the form
\[
  W(t) = \sum_{n=0}^\infty \sqrt{\lambda_n} \beta_n(t) \phi_n,
  \quad t \geqslant 0,
\]
where the following conditions hold: $ \{\lambda_n\}_{n=0}^\infty $ is a
sequence of strictly positive numbers satisfying $ \sum_{n=0}^\infty \lambda_n <
\infty $; $ \{\phi_n\}_{n=0}^\infty $ is an orthonormal basis of $ L^2(\Gamma) $;
$ \{\beta_n\}_{n=0}^\infty $ is a sequence of real valued Brownian motions
defined on $ (\Omega, \mathcal F, \mathbb P) $, mutually independent and $
\mathbb F $-adapted. We consider the following model problem:
\begin{equation}
  \label{eq:model}
  \begin{aligned}
    \min_{u \in U_\text{ad}} \mathcal J(u,y) & :=
    \frac12 \nm{y-y_d}_{L^2(\Omega;L^2(0,T; L^2(\mathcal O)))}^2 +
    \frac{\nu}2 \nm{u}_{L^2(\Omega;L^2(0,T;L^2(\Gamma)))}^2,
  \end{aligned}
\end{equation}
subject to the state equation
\begin{equation}
  \label{eq:state}
  \begin{cases}
    \frac{\partial}{\partial t} y(t,x) - \Delta y(t,x) + y(t,x) = 0,
    & 0 \leqslant t \leqslant T, \, x \in \mathcal O, \\
    \partial_{\bm n}y(t,x) = u(t,x) +
    \sum_{n=0}^\infty \lambda_n^{1/2}
    (\sigma(t) \phi_n)(x) \frac{\mathrm{d}\beta_n(t)}{\mathrm{d}t},
    & 0 \leqslant t \leqslant T, \, x \in \Gamma, \\
    y(0,x) = 0, & x \in \mathcal O,
  \end{cases}
\end{equation}
where $ 0 < \nu,T < \infty $, $ y_d $ and $ \sigma $ are two given processes,
and $ \bm n $ is the outward unit normal vector to $ \Gamma $. The above
admissible space $ U_\text{ad} $ is defined by
\[
  U_\text{ad} := \big\{
    u \in L_{\mathbb F}^2(\Omega;L^2(0,T;L^2(\Gamma))) \mid
    u_* \leqslant u \leqslant u^*
  \big\},
\]
where $  u_* < u^* $ are two given constants.


In the past four decades, a considerable number of papers have been published in
the field of optimal control problems of stochastic partial differential
equations; see~\cite{Bensoussan1983,Hu_Peng_1991,Zhou1993,Debussche2007,
Guatteri2011, Fuhrman2012,Du2013,Fuhrman2013,Guatteri2013,Lu2014,Swiech2020,
LuZhang2021} and the references therein. By now it is still a
highly active research area. However, the numerical analysis of stochastic
parabolic optimal control problems is quite rare. The numerical analysis of
stochastic parabolic optimal control problems consists of two main
challenges. The first is to derive convergence rate for rough data. To keep the
co-state $ \mathbb F $-adapted, the adjoint equation has a correction term $ z(t)
\, \mathrm{d}W(t) $. The process $ z $ is of low temporal regularity for rough
data, and this makes the convergence rate difficult to derive. Here we
especially emphasize the case that the filtration is not the natural one of the
Wiener process. Since in this case the martingale representation theorem can not
be utilized, generally there is essential difficulty in analyzing the adjoint
equation of the stochastic optimal control problems; see~\cite{Lu2014}. The
second is to design efficient fully implementable algorithms. Generally, the
discretization of the adjoint equation will be used for the construction of an
efficient algorithm for a parabolic optimal control problem. The adjoint
equation of a stochastic optimal control problem is a backward stochastic
parabolic equation. To solve a backward parabolic equation requires computing a
conditional expectation at each time step. However, computing conditional
expectations is a challenging problem; see~\cite{Longstaff2001,Bouchard2004,
Gobet2005,Bender2007} and the references therein.

We briefly summarize the numerical analysis of stochastic parabolic optimal
control problems in the literature as follows. Dunst and Prohl~\cite{Dunst2016}
analyzed a spatial semi-discretization of a forward-backward stochastic heat
equation, and, using the least squares Monte-Carlo method to compute the
conditional expectations, they constructed two fully implementable
algorithms. Levajkovi\'c et al.~\cite{Levajkovic2017} presented an approximation
framework for computing the solution of the stochastic linear quadratic control
problem on Hilbert spaces; however, the time variable is not discretized in this
framework. Recently, Prohl and Wang~\cite{Wang2020a, Wang2020b} analyzed
numerically the distributed optimal control problems of stochastic heat equation
driven by additive and multiplicative noises, and Li and Zhou~\cite{LiZhou2020}
analyzed a distributed optimal control problem of a stochastic
parabolic equation driven by multiplicative noise. Li and Xie~\cite{Li-Xie-2022-space,Li-Xie-2022-time} analyzed the discretizations of
general stochastic linear quadratic control problems. We also refer the reader to
\cite{Gong2017} and the references therein for the numerical analysis of optimal
control problems governed by stochastic differential equations, and refer the
reader to \cite{Youngmi2018,Gunzburger2011} and the references therein for the
numerical analysis of optimal control problems governed by random partial
differential equations. The above works of stochastic parabolic optimal control
problems generally require that $ \mathbb F $ is the natural filtration of the Wiener process,
and, to our best knowledge, no numerical result is available for the stochastic
parabolic optimal control problems with boundary noises.





Since the noise is additive and the state equation is linear, we essentially use
a backward parabolic equation parameterized by the argument $ \omega \in \Omega
$ as the adjoint equation, instead of using the backward stochastic parabolic
equation. This enables us to establish the convergence of the discrete
stochastic optimal control problem for general filtration, which is the main
novelty of our theoretical results. Furthermore, under the condition that $
\mathbb F $ is the natural filtration of $ W(\cdot) $, we mainly use the mild
solution theory of the backward stochastic parabolic equations to derive the
convergence rate $ O(\tau^{1/4-\epsilon} + h^{1/2-\epsilon}) $ for the rough
data $ y_d \in L_\mathbb F^2(\Omega;L^2(0,T;L^2(\mathcal O))) $ and
\[
  \sigma \in L_\mathbb F^2(\Omega;L^2(0,T;\mathcal L_2^0))
  \cap L^\infty(0,T;L^2(\Omega;\mathcal L_2^0)),
\]
where $ \epsilon $ is a sufficiently small number.
Our numerical analysis can be easily extended to the distributed optimal control
problems governed by linear stochastic parabolic equations with additive noise
and general filtration. When the filtration is the natural one of the $ Q
$-Wiener process, our analysis can remove the restrictions on the data
imposed in \cite{Wang2020a}.

The rest of this paper is organized as follows. In \cref{sec:pre} we introduce
some notations and the first-order optimality condition of problem
\cref{eq:model}. In \cref{sec:discre} we present a discrete stochastic optimal
control problem, and in \cref{sec:proof} we prove the convergence of this
discrete stochastic optimal control problem.
Finally, in \cref{sec:conclusion} we conclude this paper.

\section{Preliminaries}
\label{sec:pre}
\medskip\noindent{\bf Conventions}. For any random variable $ v $ defined on $
(\Omega, \mathcal F, \mathbb P) $, $ \mathbb Ev $ denotes the expectation of $ v
$, and $ \mathbb E_t v $ denotes the conditional expectation of $ v $ with
respect to $ \mathcal F_t $ for any $ t > 0 $. We use $ \dual{\cdot,\cdot}
_{\mathcal O} $ and $ \dual{\cdot,\cdot}_{\Gamma} $ to denote the inner products
of $ L^2(\mathcal O) $ and $ L^2(\Gamma) $, respectively, and use $ [\cdot,
\cdot] $ and $ \dual{\cdot, \cdot} $ to denote the inner products of $
L^2(\Omega; L^2(\mathcal O)) $ and $ L^2(\Omega;L^2(\Gamma)) $,
respectively. The notation $ I $ denotes the identity mapping. For any Hilbert
space $ X $, we simply use $ \ssnm{\cdot}_X $ to denote the norm of the Hilbert
space $ L^2(\Omega; X) $. For any two separable Hilbert spaces $ X_1 $ and $ X_2
$, $ \mathcal L(X_1, X_2) $ denotes the set of all bounded linear operators from
$ X_1 $ to $ X_2 $, and $ \mathcal L_2(X_1, X_2) $ denotes the set of all
Hilbert-Schmidt operators from $ X_1 $ to $ X_2 $. Define
\[ 
  L_\lambda^2 := \Big\{
    \sum_{n=0}^\infty c_n \sqrt{\lambda_n} \phi_n \Big|\,
    \sum_{n=0}^\infty c_n^2 < \infty
  \Big\}
\]
and endow this space with the inner product
\[
  \Big(
    \sum_{n=0}^\infty c_n \sqrt{\lambda_n} \phi_n, \,\,
    \sum_{n=0}^\infty d_n \sqrt{\lambda_n} \phi_n
  \Big)_{L_\lambda^2} := \sum_{n=0}^\infty  c_n d_n
\]
for all $ \sum_{n=0}^\infty c_n \sqrt{\lambda_n} \phi_n, \sum_{n=0}^\infty d_n
\sqrt{\lambda_n} \phi_n \in L_\lambda^2 $.  In particular,
$ \mathcal L_2(L_\lambda^2, L^2(\Gamma)) $ is abbreviated to $ \mathcal
L_2^0 $. For any separable Hilbert space $ X $, define
\begin{align*} 
  L_\mathbb F^2(\Omega;L^2(0,T;X)) & := \Big\{
    \varphi: [0,T] \times \Omega \to X \mid
    \text{$ \varphi $ is $ \mathbb F $-progressively measurable and} \\
    & \qquad\qquad\qquad\qquad\qquad
    \ssnm{\varphi}_{L^2(0,T;X)} < \infty
  \Big\},
\end{align*}
and let $ \mathcal E_\mathbb F $ be the $ L^2\!(\Omega;\!L^2(0,T;\!X))
$-orthogonal projection onto $ L_\mathbb F^2\!(\Omega;\!L^2(0,T;\!X)) $. In
addition, let $ L_\mathbb F^2(\Omega; C([0,T];X)) $ be the space of all $ X
$-valued and $ \mathbb F $-adapted continuous processes. By saying that $
\mathbb F $ is the natural filtration of $ W(\cdot) $, we mean that $ \mathbb F
$ is generated by $ W(\cdot) $ and augmented by
\[
  \{ \mathcal N \in \mathcal F \mid \mathbb P(\mathcal N) = 0 \}.
\]




\medskip\noindent{\bf Definition of $ \dot H^\gamma $}. Let $ A $ be the
realization of the partial differential operator $ \Delta - I $ in $
L^2(\mathcal O) $ with homogeneous Neumann boundary condition. More precisely,
\begin{align*} 
  & \text{Domain}(A) := \{
    v \in H^2(\mathcal O) \mid
    \partial_{\bm n} v = 0 \text{ on } \Gamma
  \} \\
  & \text{ and } Av := \Delta v - v \quad
  \text{for all } v \in \text{Domain}(A).
\end{align*}
For any $ \gamma \geqslant 0 $, define
\[
  \dot H^\gamma := \{
    (-A)^{-\gamma/2} v \mid
    v \in L^2(\mathcal O)
  \}
\]
and endow this space with the norm
\[ 
  \nm{v}_{\dot H^\gamma} :=
  \nm{(-A)^{\gamma/2} v}_{L^2(\mathcal O)}
  \quad \forall v \in \dot H^\gamma.
\]
For each $ \gamma > 0 $, we use $ \dot H^{-\gamma} $ to denote the dual space of
$ \dot H^\gamma $, and use $ \dual{\cdot,\cdot}_{\dot H^\gamma} $ to denote the
duality pairing between $ \dot H^{-\gamma} $ and $ \dot H^\gamma $. In the
sequel, we will mainly use the notation $ \dot H^0 $ instead of $ L^2(\mathcal O)
$ for convenience.

\medskip\noindent{\bf Definitions of $ S_0 $ and $ S_1 $}. Extend $ A $ as a
bounded linear operator from $ \dot H^0 $ to $ \dot H^{-2} $ by
\[ 
  \dual{Av, \varphi}_{\dot H^{2}} :=
  \dual{v, A\varphi}_{\mathcal O}
\]
for all $ v \in \dot H^0 $ and $ \varphi \in \dot H^{2} $. The operator $ A $
then generates an analytic semigroup $ \{e^{tA}\}_{t \geqslant 0} $ in $ \dot
H^{-2} $ (cf.~\cite[Theorems 2.5 and 2.8]{Yagi2010}), possessing the following
property (cf.~\cite[Theorem~6.13, Chapter~2] {Pazy1983}): for any $ -2 \leqslant
\beta \leqslant \gamma \leqslant 2 $ and $ t > 0 $,
\begin{equation} 
  \label{eq:etA}
  \nm{e^{tA}}_{\mathcal L(\dot H^\beta, \dot H^\gamma)}
  \leqslant C t^{(\beta-\gamma)/2},
\end{equation}
where $ C $ is a positive constant depending only on $ \beta $ and $ \gamma $.
For any $ g \in L^2(0, T;\dot H^{-2}) $, the equation
\begin{equation} 
  \begin{cases}
    y'(t) - A y(t) = g(t) \quad \forall 0 \leqslant t \leqslant T, \\
    y(0) = 0
  \end{cases}
\end{equation}
admits a unique mild solution
\begin{equation}
  \label{eq:S0-def}
  (S_0g)(t) := \int_0^t e^{(t-s)A} g(s) \, \mathrm{d}s,
  \quad 0 \leqslant t \leqslant T.
\end{equation}
Symmetrically, for any $ g \in L^2(0,T;\dot H^{-2}) $, the equation
\begin{equation} 
  \begin{cases}
    -z'(t) - A z(t) = g(t) \quad \forall 0 \leqslant t \leqslant T, \\
    z(T) = 0
  \end{cases}
\end{equation}
admits a unique mild solution
\begin{equation}
  \label{eq:S1-def}
  (S_1g)(t) := \int_t^T e^{(s-t)A} g(s) \, \mathrm{d}s,
  \quad 0 \leqslant t \leqslant T.
\end{equation}
For any $ f,g \in L^2(0,T;\dot H^{-1}) $, a routine energy argument yields
\begin{align} 
  \nm{S_0f}_{L^2(0,T;\dot H^1)}
  & \leqslant \nm{f}_{L^2(0,T;\dot H^{-1})},
  \label{eq:S0R-C} \\
  \nm{S_1g}_{L^2(0,T;\dot H^1)}
  & \leqslant \nm{g}_{L^2(0,T;\dot H^{-1})},
  \label{eq:S1-C}
\end{align}
and it is easily verified that
\begin{equation} 
  \label{eq:S0-S1}
  \int_0^T \dual{g(t), (S_0f)(t)}_{\dot H^1} \, \mathrm{d}t =
  \int_0^T \dual{f(t), (S_1g)(t)}_{\dot H^1} \, \mathrm{d}t.
\end{equation}


\medskip\noindent{\bf Definition of $ G $.} Assume that $ \sigma \in L_{\mathbb
F}^2(\Omega;L^2(0,T;\mathcal L_2^0)) $. Let $ G $ be the mild solution of the
stochastic evolution equation
\begin{equation} 
  \begin{cases}
    \mathrm{d}G(t) = A G(t) \, \mathrm{d}t +
    \mathcal R \sigma(t) \, \mathrm{d}W(t),
    \quad \forall 0 \leqslant t \leqslant T, \\
    G(0) = 0,
  \end{cases}
\end{equation}
where $ \mathcal R \in \mathcal L(L^2(\Gamma), \dot H^{-1/2-\epsilon}) $, $
0 < \epsilon \leqslant 1/2 $, is defined by
\begin{equation} 
  \label{eq:calR-def}
  \dual{\mathcal Rv, \varphi}_{
    \dot H^{1/2+\epsilon}
  } := \dual{v,\varphi}_\Gamma  \quad
  \text{
    for all $ v \in L^2(\Gamma) $ and
    $ \varphi \in \dot H^{1/2+\epsilon} $
  }.
\end{equation}
For any $ 0 \leqslant t \leqslant T $, we have
(cf.~\cite[Chapter~3]{Gawarecki2011} and \cite[Chapter 5]{Prato2014})
\begin{align}
  G(t) &= \int_0^t e^{(t-s)A} \mathcal R \sigma(s) \, \mathrm{d}W(s)
  \quad \mathbb P \text{-a.s.,}
  \label{eq:G-def} \\
  G(t) &= \int_0^t A G(s) \, \mathrm{d}s +
  \int_0^t \mathcal R \sigma(s) \, \mathrm{d}W(s)
  \quad\text{$\mathbb P$-a.s.}
  \label{eq:G-int}
\end{align}
If $ \sigma \in L_\mathbb F^2(\Omega;L^2(0,T;\mathcal L_2^0)) \cap L^\infty(0,T;
L^2(\Omega;\mathcal L_2^0)) $, then, for any $ 0 < t \leqslant T $ and $ 0
\leqslant \gamma <(1/2-\epsilon)/2 $,
\begin{small}
\begin{align*} 
  \ssnm{G(t)}_{\dot H^{2\gamma}}^2 &=
  \int_0^t \ssnm{
    e^{(t-s)A} \mathcal R \sigma(s)
  }_{
    \mathcal L_2(L_\lambda^2,\dot H^{2\gamma})
  }^2 \, \mathrm{d}s \\
  & \leqslant \int_0^t
  \nm{e^{(t-s)A}}_{
    \mathcal L(\dot H^{-1/2-\epsilon}, \dot H^{2\gamma})
  }^2 \nm{\mathcal R}_{
    \mathcal L(L^2(\Gamma), \dot H^{-1/2-\epsilon})
  }^2
  \ssnm{\sigma(s)}_{\mathcal L_2^0}^2 \, \mathrm{d}s \\
  & \leqslant C
  \int_0^t (t-s)^{-2\gamma-1/2-\epsilon}
  \nm{\mathcal R}_{
    \mathcal L(L^2(\Gamma), \dot H^{-1/2-\epsilon})
  }^2 \ssnm{\sigma(s)}_{\mathcal L_2^0}^2 \, \mathrm{d}s
  \quad\text{(by \cref{eq:etA})} \\
  & \leqslant C
  t^{1/2-2\gamma-\epsilon} \nm{\mathcal R}_{
    \mathcal L(L^2(\Gamma), \dot H^{-1/2-\epsilon})
  }^2 \nm{\sigma}_{
    L^\infty(0,T;L^2(\Omega;\mathcal L_2^0))
  }^2,
\end{align*}
\end{small}
so that by the fact $ \mathcal R \in \mathcal L(L^2(\Gamma), \dot
H^{-1/2-\epsilon}) $ we obtain
\begin{equation} 
  \label{eq:G-regu}
  \ssnm{G(t)}_{ \dot H^{2\gamma} }
  \leqslant C t^{1/4-\gamma-\epsilon/2}
  \nm{\sigma}_{L^\infty(0,T;L^2(\Omega;\mathcal L_2^0))}.
\end{equation}
The above $ C $ denotes a positive constant depending only on $ \epsilon $, $
\gamma $ and $ \mathcal O $, and its value may differ in different places. For
more theoretical results, we refer the reader to \cite{Prato1993}.

\begin{remark}
  By \cite[Theorem 1.5.1.2]{Grisvard1985}, we have that $ \dot H^{1/2+\epsilon}
  $ is continuously embedded into $ L^2(\Gamma) $, and so the above operator $
  \mathcal R $ is well-defined.
\end{remark}


\medskip\noindent{\bf First-order optimality condition.} We call $ \bar u \in
U_\text{ad} $ a solution to problem \cref{eq:model} if $ \bar u $ minimizes the
cost functional
\begin{equation*} 
  \mathcal J(u) := \frac12
  \ssnm{
    S_0 \mathcal R u + G - y_d
  }_{L^2(0,T;L^2(\mathcal O))}^2 +
  \frac\nu2 \ssnm{u}_{L^2(0,T;L^2(\Gamma))}^2,
  \quad u \in U_\text{ad}.
\end{equation*}
It is standard that problem \cref{eq:model} admits a unique solution $ \bar u
$. Moreover, by \cref{eq:S0-S1,eq:calR-def} we have the first-order optimality
condition
\begin{equation}
  \label{eq:optim-cond}
  \int_0^T \dual{\bar p + \nu\bar u, u-\bar u} \, \mathrm{d}t
  \geqslant 0 \quad \text{ for all } u \in U_\text{ad},
\end{equation}
where
\begin{align}
  \bar y &:= S_0 \mathcal R\bar u + G, \label{eq:optim-y} \\
  \bar p &:= S_1\big( \bar y-y_d \big). \label{eq:optim-p}
\end{align}
It follows that
\begin{equation}
  \label{eq:barp-baru}
  \bar u = \mathcal P_{[u_*, u^*]} \Big(
    -\frac1\nu  \mathcal E_\mathbb F( \operatorname{tr} \bar p )
  \Big),
\end{equation}
where $ \operatorname{tr} $ is the trace operator from $ \dot H^1 $ to $
L^2(\Gamma) $ and
\begin{equation}
  \label{eq:Proj}
  \mathcal P_{[u_*,u^*]}(r) :=
  \begin{cases}
    u_* & \text{ if } r < u_*, \\
    r & \text{ if } u_* \leqslant r \leqslant u^*, \\
    u^* & \text{ if } r > u^*.
  \end{cases}
\end{equation}

\begin{remark}
  Assume that $ \mathbb F $ is the natural filtration of $ W(\cdot) $. The usual
  first-order optimality condition of problem \cref{eq:model} is that
  (cf.~\cite{Bensoussan1983,Debussche2007,Du2013,Fuhrman2013,Guatteri2011,Guatteri2013})
  \[
    \int_0^T \dual{\bar p + \nu \bar u, u - \bar u} \, \mathrm{d}t
    \geqslant 0 \quad \text{for all } u \in U_\text{ad}.
  \]
  The above $ \bar p $ is the first component of the solution $ (\bar p, \bar z)
  $ to the backward stochastic parabolic equation
  \[
    \begin{cases}
      \mathrm{d}\bar p(t) =
      -(A\bar p + S_0\mathcal R\bar u + G - y_d)(t) \, \mathrm{d}t +
      \bar z(t) \, \mathrm{d}\widetilde W(t),
      \quad 0 \leqslant t \leqslant T, \\
      \bar p(T) = 0,
    \end{cases}
  \]
  where $ \widetilde W $ is a cylindrical Wiener process in $ L^2(\mathcal O) $
  defined later by \cref{eq:wtW}. Moreover, following the proof of
  \cref{lem:I-Ptau-u}, we can obtain
  \[
    \bar u \in L_\mathbb F^2(\Omega;C([0,T];H^{1/2}(\Gamma))),
  \]
  where $ H^{1/2}(\Gamma) $ is a standard fractional order Sobolev space on $
  \Gamma $. 
\end{remark}


\section{Discrete stochastic optimal control problem}
\label{sec:discre}
Let $ J > 0 $ be an integer and define $ t_j := j\tau $ for each $ 0 \leqslant j
\leqslant J $, where $ \tau := T/J $. Let $ \mathcal K_h $ be a conventional
conforming, shape regular and quasi-uniform triangulation of $ \mathcal O $
consisting of $ d $-simplexes, and we use $ h $ to denote the maximum diameter
of the elements in $ \mathcal K_h $. Define
\begin{align*} 
  \mathcal V_h &:= \left\{
    v_h \in C(\overline{\mathcal O}) \mid\,
    v_h \text{ is linear on each }
    K \in \mathcal K_h
  \right\}, \\
  \mathcal X_{h,\tau} &:= \big\{
    V : [0,T] \times \Omega \to \mathcal V_h \mid\,
    V(t_j) \in L^2(\Omega;\mathcal V_h) \text{ and $ V $ is constant}  \\
    & \qquad\qquad\qquad\qquad\qquad\qquad
    \text{on }[t_j,t_{j+1}) \quad \text{for each } 0 \leqslant j < J
  \big\}.
\end{align*}
For any $ V \in \mathcal X_{h,\tau} $, by $ V_j $ we mean $ V(t_j) $ for each $
0 \leqslant j \leqslant J $.  Define $ Q_h: \dot H^{-1} \to \mathcal V_h $ by
\[ 
  \dual{Q_h v, v_h}_{\mathcal O} =
  \dual{v, v_h}_{\dot H^1}
  \quad \text{for all $ v \in \dot H^{-1} $ and $ v_h \in \mathcal V_h $},
\]
and define $ A_h: \mathcal V_h \to \mathcal V_h $ by
\[ 
  \dual{A_h v_h, w_h}_{\mathcal O} =
  -\int_{\mathcal O}\nabla v_h \cdot \nabla w_h + v_h w_h
  \quad\text{for all } v_h, w_h \in \mathcal V_h.
\]
For any $ g \in L^2(\Omega;L^2(0,T;\dot H^{-1})) $, define $ S_0^{h,\tau} g \in
\mathcal X_{h,\tau} $, an approximation of $ S_0g $, by
\begin{equation} 
  \label{eq:calS0}
  \begin{cases}
    (S_0^{h,\tau}g)_0 = 0, \\
    (S_0^{h,\tau}g)_{j+1} - (S_0^{h,\tau}g)_j =
    \tau A_h (S_0^{h,\tau}g)_{j+1} +
    \int_{t_j}^{t_{j+1}} Q_h g(t) \, \mathrm{d}t,
    \quad 0 \leqslant j < J.
  \end{cases}
\end{equation}
Define $ G_{h,\tau} \in \mathcal X_{h,\tau} $, an approximation of $ G $, by
\begin{equation} 
  \label{eq:calGhtau}
  \begin{cases}
    (G_{h,\tau})_0 = 0, \\
    (\!G_{h,\tau}\!)_{j+1} \!-\! (\!G_{h,\tau}\!)_j=
    \tau \! A_h (G_{h,\tau})_{j+1} \!+\! \int_{t_j}^{t_{j+1}}
    \!Q_h\!\mathcal R\, \sigma(t) \,\mathrm{d}W\!(t),
    \, 0 \leqslant j \!<\! J.
  \end{cases}
\end{equation}
\begin{remark}
  For any $ g \in L^2(\Omega;L^2(0,T;L^2(\Gamma))) $, a routine energy argument
  (cf.~\cite[Chapter~12]{Thomee2006} and \cite{Vexler2008I}) yields that
  \begin{equation} 
    \label{eq:S0-stab}
    \max_{0 \leqslant j \leqslant J}
    \ssnm{(S_0^{h,\tau}\mathcal Rg)_j}_{\dot H^0}
    \leqslant C \ssnm{g}_{L^2(0,T;L^2(\Gamma))},
  \end{equation}
  where $ C $ is a positive constant depending only on $ \mathcal O $.
\end{remark}

\begin{remark}
  Although $ \mathcal X_{h,\tau} \not\subset L_\mathbb F^2(\Omega;L^2(0,T;\dot
  H^0)) $, we have
  \[
    G_{h,\tau} \in L_\mathbb F^2(\Omega;L^2(0,T;\mathcal V_h)).
  \]
  Moreover, $ S_0^{h,\tau}g \in L_\mathbb F^2(\Omega;L^2(0,T;\mathcal V_h)) $
  for all $ g \in L_\mathbb F^2(\Omega;L^2(0,T;\dot H^{-1})) $.
\end{remark}

The discrete stochastic optimal control problem is to seek a discrete control $
\bar U \in U_\text{ad}^{h,\tau} $ such that
\begin{equation} 
  \label{eq:discretization}
  \mathcal J_{h,\tau}(\bar U) =
  \min_{U \in U_\text{ad}^{h,\tau}}
  \mathcal J_{h,\tau}(U),
\end{equation}
where the discrete admissible control space is defined by
\[ 
  U_\text{ad}^{h,\tau} := \left\{
    U \in U_\text{ad} \mid
    \text{$ U $ is constant on } [t_j,t_{j+1})
    \quad\forall 0 \leqslant j < J
  \right\}
\]
and the discrete cost functional is defined by
\begin{small}
\[ 
  \mathcal J_{h,\tau}(U) :=
  \frac12 \ssnm{
    S_0^{h,\tau} \mathcal R U + G_{h,\tau} -y_d
  }_{L^2(0,T;\dot H^0)}^2 +
  \frac\nu2 \ssnm{U}_{L^2(0,T;L^2(\Gamma))}^2
  \, \forall U \in U_\text{ad}^{h,\tau}.
\]
\end{small}

\begin{remark}
  Note that the discrete control is not discretized in space. This is just for
  the sake of numerical analysis, and there is no essential difficulty in
  extending the numerical analysis of this paper to the case that the discrete
  control is discretized by the standard continuous piecewise linear element
  method in space.
\end{remark}

Then, let us present the first-order optimality condition for the above discrete
problem. For any $ g \in L^2(\Omega;L^2(0,T;\dot H^{-1})) $, define $ S_1^{h,\tau}g
\in \mathcal X_{h,\tau} $ by
\begin{equation} 
  \label{eq:calS1}
  \begin{cases}
    (S_1^{h,\tau}g)_J = 0, \\
    (S_1^{h,\tau}g)_j - (S_1^{h,\tau}g)_{j+1} =
    \tau A_h (S_1^{h,\tau}g)_j +
    \int_{t_j}^{t_{j+1}} Q_h g(t) \, \mathrm{d}t,
    \quad 0 \leqslant j < J.
  \end{cases}
\end{equation}
In view of \cref{eq:calS0,eq:calS1}, a straightforward calculation yields that,
for any $ f \in L^2(\Omega;L^2(0,T; L^2(\Gamma))) $ and $ g \in L^2(\Omega;
L^2(0,T;\dot
H^0)) $,
\begin{equation} 
  \label{eq:S1htau-S2htau}
  \int_0^T [ S_0^{h,\tau} \mathcal R f, g ] \, \mathrm{d}t =
  \sum_{j=0}^{J-1} \int_{t_j}^{t_{j+1}}
  \dual{ f(t), (S_1^{h,\tau}g)_{j+1} } \, \mathrm{d}t.
\end{equation}
Using this equality, we readily conclude that
\begin{equation}
  \label{eq:discr-optim}
  \sum_{j=0}^{J-1} \int_{t_j}^{t_{j+1}}
  \dual{\bar P_{j+1} + \nu \bar U, U - \bar U} \, \mathrm{d}t
  \geqslant 0 \quad \text{for all $ U \in U_\text{ad}^{h,\tau} $,}
\end{equation}
where $ \bar U $ is the unique solution of problem \cref{eq:discretization} and
\begin{equation}
  \label{eq:barP-def}
  \bar P := S_1^{h,\tau}(S_0^{h,\tau}\mathcal R\bar U + G_{h,\tau} - y_d).
\end{equation}
It is evident from \cref{eq:discr-optim} that, for any $ 0 \leqslant j < J $,
\[
  \bar U_j = \mathcal P_{[u_*, u^*]} \Big(
    -\frac1\nu\mathbb E_{t_j} \operatorname{tr}\bar P_{j+1})
  \Big).
\]

\begin{remark}
  Let $ \bar U $ be the solution to problem \cref{eq:discretization}, and let
  \begin{align*}
    \bar Y &:= S_0^{h,\tau}\mathcal R \bar U + G_{h,\tau}, \\
    \bar P &:= S_1^{h,\tau}(\bar Y - y_d).
  \end{align*}
  Assume that there exist deterministic functions $ \mathscr U $ and $ \mathscr
  P $ such that
  \begin{equation}
    \label{eq:eq-in-remark}
    \bar U = \mathscr U(\bar Y) \quad\text{and} \quad
    \bar P = \mathscr P(\bar Y).
  \end{equation}
  Assume that $ \sigma(t) = 0 $ for all $ 0 \leqslant t \leqslant T $. From the
  fact that $ \bar U(t_0) $ is deterministic, it follows that $ \bar Y(t_1) $ is
  deterministic. Hence, according to the assumption, $ \bar U(t_1) $ and $ \bar
  P(t_1) $ are deterministic. Similarly, $ \bar Y(t_j) $, $ \bar U(t_j) $ and $
  \bar P(t_j) $ are deterministic for all $ 2 \leqslant j \leqslant J $. It
  follows that $ \bar Y $, $ \bar U $ and $ \bar P $ are deterministic.
  However, in general this is evidently incorrect, since $ y_d $ is a stochastic
  process. The above example illustrates that, for our model problem, generally
  we can not expect the existence of deterministic functions $ \mathscr U $ and
  $ \mathscr P $ such that \cref{eq:eq-in-remark} holds. Since
  \cref{eq:eq-in-remark} is crucial for the application of the algorithms in
  \cite{Dunst2016} to problem \cref{eq:discretization}, these algorithms are not
  applicable. 
\end{remark}

Finally, we present the convergence of the discrete stochastic optimal control
problem \cref{eq:discretization}. For any $ v \in L_\mathbb F^2(\Omega;L^2(0,T;
X)) $ with $ X $ being a separable Hilbert space, define $ \mathcal P_\tau v $ by
\begin{equation} 
  (\mathcal P_\tau v)(t) =
    \frac1\tau \mathbb E_{t_j} \int_{t_j}^{t_{j+1}} v(s) \, \mathrm{d}s
\end{equation}
for all $ t_j \leqslant t < t_{j+1} $ with $ 0 \leqslant j < J $.
\begin{theorem} 
  \label{thm:conv}
  Assume that $ y_d \in L_{\mathbb F}^2(\Omega;L^2(0,T;\dot H^0)) $ and
  \[
    \sigma \in L_{\mathbb F}^2(\Omega;L^2(0,T;\mathcal L_2^0))
    \cap L^\infty(0,T;L^2(\Omega;\mathcal L_2^0)).
  \]
  Let $ \bar u $ be the solution of problem \cref{eq:model}, let $ \bar p $ be
  defined by \cref{eq:optim-p}, and let $ \bar U $ be the solution of problem
  \cref{eq:discretization}. Then
  \begin{small}
  \begin{equation}
    \label{eq:conv-1}
    \begin{aligned}
      & \ssnm{\bar u - \bar U}_{L^2(0,T;L^2(\Gamma))} +
      \ssnm{
        S_0\mathcal R\bar u -
        S_0^{h,\tau}\mathcal R\bar U
      }_{L^2(0,T;\dot H^0)} \\
      \leqslant{} &
      C \Big(
        \tau^{1/4-\epsilon} + h^{1/2-\epsilon} +
        \ssnm{(I-\mathcal P_\tau)\bar u}_{L^2(0,T;L^2(\Gamma))} +
        \ssnm{(I-\mathcal P_\tau)\mathcal E_\mathbb F\bar p}_{
          L^2(0,T;L^2(\Gamma))
        }
      \Big).
    \end{aligned}
  \end{equation}
  \end{small}
  Moreover, if $ \mathbb F $ is the natural filtration of $ W(\cdot) $, then
  \begin{equation} 
    \label{eq:conv-2}
    \begin{aligned}
      & \ssnm{\bar u - \bar U}_{
        L^2(0,T;L^2(\Gamma))
      } + \ssnm{
        S_0\mathcal R\bar u -
        S_0^{h,\tau}\mathcal R\bar U
      }_{ L^2(0,T;\dot H^0) } \\
      \leqslant{} &
      C\big(
        \tau^{1/4-\epsilon} + h^{1/2-\epsilon}
      \big).
    \end{aligned}
  \end{equation}
  The above $ \epsilon $ is a sufficiently small positive number and the above $
  C $ is a positive constant depending only on $ u_* $, $ u^* $, $ y_d $, $
  \epsilon $, $ \sigma $, $ \nu $, $ \mathcal O $, $ T $ and the regularity
  parameters of $ \mathcal K_h $.
\end{theorem}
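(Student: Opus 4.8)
\emph{Proof plan.} The idea is to play the continuous and discrete first-order optimality conditions \cref{eq:optim-cond} and \cref{eq:discr-optim} off against each other, reducing the control error to a fully discrete error estimate for the adjoint equation at the fixed control $\bar u$, and then to deduce \cref{eq:conv-2} from \cref{eq:conv-1} by bounding the two projection-error terms under the extra regularity available for the natural filtration. First I would record that $\mathcal P_\tau$ is the $L^2(\Omega;L^2(0,T;L^2(\Gamma)))$-orthogonal projection onto the subspace of $\mathbb F$-adapted processes that are constant on each $[t_j,t_{j+1})$; this follows by a direct computation from the self-adjointness of conditional expectation and the fact that $\mathbb E_{t_j}$ commutes with integration in $t$ over $[t_j,t_{j+1})$. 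In particular $\mathcal P_\tau$ is self-adjoint and idempotent, and both $\bar U$ and $\mathcal P_\tau\bar u$ lie in its range. Taking $u=\bar U\in U_\text{ad}^{h,\tau}\subset U_\text{ad}$ in \cref{eq:optim-cond}, taking $U=\mathcal P_\tau\bar u\in U_\text{ad}^{h,\tau}$ in \cref{eq:discr-optim} (admissible because $\mathcal P_\tau$ preserves the bounds $u_*\leqslant\,\cdot\,\leqslant u^*$), and adding the two inequalities, the zeroth-order terms collapse --- after using the above properties of $\mathcal P_\tau$ --- to $-\nu\ssnm{\bar u-\bar U}_{L^2(0,T;L^2(\Gamma))}^2$, leaving
\[
  \nu\ssnm{\bar u-\bar U}_{L^2(0,T;L^2(\Gamma))}^2
  \leqslant\int_0^T\dual{\operatorname{tr}\bar p,\,\bar U-\bar u}\,\mathrm{d}t
  +\sum_{j=0}^{J-1}\int_{t_j}^{t_{j+1}}\dual{\operatorname{tr}\bar P_{j+1},\,\mathcal P_\tau\bar u-\bar U}\,\mathrm{d}t .
\]

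Next I would peel off the dependence of $\bar P$ on $\bar U$ by introducing the auxiliary fully discrete adjoint $\bar P(\bar u):=S_1^{h,\tau}(S_0^{h,\tau}\mathcal R\bar u+G_{h,\tau}-y_d)$, so that $\bar P=\bar P(\bar u)+S_1^{h,\tau}S_0^{h,\tau}\mathcal R(\bar U-\bar u)$. By the discrete adjointness identity \cref{eq:S1htau-S2htau} and the stability bound \cref{eq:S0-stab}, the contribution of $S_1^{h,\tau}S_0^{h,\tau}\mathcal R(\bar U-\bar u)$ to the sum above equals $-\ssnm{S_0^{h,\tau}\mathcal R(\bar u-\bar U)}_{L^2(0,T;\dot H^0)}^2$ plus a term bounded by $C\,\ssnm{(I-\mathcal P_\tau)\bar u}_{L^2(0,T;L^2(\Gamma))}\,\ssnm{S_0^{h,\tau}\mathcal R(\bar u-\bar U)}_{L^2(0,T;\dot H^0)}$. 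For the rest, using that $\bar U-\bar u$ is $\mathbb F$-adapted, that $\mathcal P_\tau\bar u-\bar U$ is $\mathcal F_{t_j}$-measurable and constant on $[t_j,t_{j+1})$, and the projection properties of $\mathcal P_\tau$, I would rewrite
\[
  \int_0^T\dual{\operatorname{tr}\bar p,\,\bar U-\bar u}\,\mathrm{d}t
  +\sum_{j=0}^{J-1}\int_{t_j}^{t_{j+1}}\dual{\operatorname{tr}\bar P(\bar u)_{j+1},\,\mathcal P_\tau\bar u-\bar U}\,\mathrm{d}t
\]
as
\[
  \int_0^T\dual{\mathcal P_\tau\mathcal E_\mathbb F\operatorname{tr}\bar p-\widehat P(\bar u),\,\bar U-\mathcal P_\tau\bar u}\,\mathrm{d}t
  -\int_0^T\dual{(I-\mathcal P_\tau)\mathcal E_\mathbb F\operatorname{tr}\bar p,\,(I-\mathcal P_\tau)\bar u}\,\mathrm{d}t ,
\]
where $\widehat P(\bar u)$ is the process equal to $\mathbb E_{t_j}\operatorname{tr}\bar P(\bar u)_{j+1}$ on each $[t_j,t_{j+1})$. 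The Cauchy--Schwarz inequality bounds the second integral by $\ssnm{(I-\mathcal P_\tau)\mathcal E_\mathbb F\bar p}_{L^2(0,T;L^2(\Gamma))}\,\ssnm{(I-\mathcal P_\tau)\bar u}_{L^2(0,T;L^2(\Gamma))}$ --- precisely the projection terms on the right of \cref{eq:conv-1} --- and the first by $\ssnm{\mathcal P_\tau\mathcal E_\mathbb F\operatorname{tr}\bar p-\widehat P(\bar u)}_{L^2(0,T;L^2(\Gamma))}\big(\ssnm{\bar u-\bar U}_{L^2(0,T;L^2(\Gamma))}+\ssnm{(I-\mathcal P_\tau)\bar u}_{L^2(0,T;L^2(\Gamma))}\big)$.

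The crux --- which I expect to be the main obstacle --- is then a fully discrete error estimate for the backward scheme \cref{eq:calS1} at the fixed control $\bar u$, namely
\[
  \ssnm{\mathcal P_\tau\mathcal E_\mathbb F\operatorname{tr}\bar p-\widehat P(\bar u)}_{L^2(0,T;L^2(\Gamma))}
  \leqslant C\big(\tau^{1/4-\epsilon}+h^{1/2-\epsilon}
  +\ssnm{(I-\mathcal P_\tau)\mathcal E_\mathbb F\bar p}_{L^2(0,T;L^2(\Gamma))}\big).
\]
With $\bar y=S_0\mathcal R\bar u+G$ one has the splitting
\[
  \bar p-\bar P(\bar u)=(S_1-S_1^{h,\tau})(\bar y-y_d)
  +S_1^{h,\tau}(S_0-S_0^{h,\tau})\mathcal R\bar u
  +S_1^{h,\tau}(G-G_{h,\tau}),
\]
and, carefully tracking the trace operator, the time-averaging and the conditional expectations, the estimate reduces to error bounds for $S_0-S_0^{h,\tau}$ and $S_1-S_1^{h,\tau}$ on data with the regularity of $\bar u$ and of $\bar y-y_d$, together with the error $G-G_{h,\tau}$ of the discretized stochastic convolution; I would establish these as separate lemmas via a standard backward-Euler/continuous piecewise-linear finite element analysis, using \cref{eq:etA}, discrete maximal regularity, and It\^o's isometry for the stochastic term. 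The exponents $1/4-\epsilon$ and $1/2-\epsilon$ are dictated by the low regularity of $G$: by \cref{eq:G-regu} one controls only $\ssnm{G(t)}_{\dot H^{2\gamma}}$ for $\gamma<(1/2-\epsilon)/2$, since the boundary lifting $\mathcal R$ is of order $-1/2-\epsilon$, and the analysis of $G-G_{h,\tau}$ and its trace under this limited regularity is the hard part.

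Finally, substituting the bounds of the two previous paragraphs into the inequality at the end of the first paragraph and applying Young's inequality to absorb every term carrying a factor $\ssnm{\bar u-\bar U}_{L^2(0,T;L^2(\Gamma))}$ or $\ssnm{S_0^{h,\tau}\mathcal R(\bar u-\bar U)}_{L^2(0,T;\dot H^0)}$ into the left-hand side --- while retaining part of the negative term $-\ssnm{S_0^{h,\tau}\mathcal R(\bar u-\bar U)}_{L^2(0,T;\dot H^0)}^2$ --- yields the asserted bound for $\ssnm{\bar u-\bar U}_{L^2(0,T;L^2(\Gamma))}+\ssnm{S_0^{h,\tau}\mathcal R(\bar u-\bar U)}_{L^2(0,T;\dot H^0)}$. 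Combining this with $S_0\mathcal R\bar u-S_0^{h,\tau}\mathcal R\bar U=(S_0-S_0^{h,\tau})\mathcal R\bar u+S_0^{h,\tau}\mathcal R(\bar u-\bar U)$ and the consistency estimate from the third paragraph gives \cref{eq:conv-1}. For \cref{eq:conv-2}, when $\mathbb F$ is the natural filtration of $W(\cdot)$, the mild solution theory of the backward stochastic parabolic adjoint equation (cf.\ the remark following \cref{eq:Proj} and the proof of \cref{lem:I-Ptau-u}) supplies enough temporal H\"older regularity of $\bar u$ and of $\mathcal E_\mathbb F\bar p$ that $\ssnm{(I-\mathcal P_\tau)\bar u}_{L^2(0,T;L^2(\Gamma))}+\ssnm{(I-\mathcal P_\tau)\mathcal E_\mathbb F\bar p}_{L^2(0,T;L^2(\Gamma))}\leqslant C\tau^{1/4-\epsilon}$, which completes the proof.
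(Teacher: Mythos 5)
Your plan is correct and follows essentially the same route as the paper's proof: combine the two variational inequalities with the test functions $\bar U$ and $\mathcal P_\tau\bar u$, convert the adjoint terms via the discrete duality identity \cref{eq:S1htau-S2htau} so as to retain a negative square of the state error, reduce everything to consistency estimates for $S_0-S_0^{h,\tau}$, $S_1-S_1^{h,\tau}$ and $G-G_{h,\tau}$ plus the two projection terms, and then bound those projection terms through the regularity of the backward stochastic adjoint when $\mathbb F$ is the natural filtration of $W(\cdot)$. The only (immaterial) deviation is your auxiliary discrete adjoint $S_1^{h,\tau}(S_0^{h,\tau}\mathcal R\bar u+G_{h,\tau}-y_d)$ in place of the paper's $S_1^{h,\tau}(S_0\mathcal R\bar u+G-y_d)$, so the retained square is $\ssnm{S_0^{h,\tau}\mathcal R(\bar u-\bar U)}_{L^2(0,T;\dot H^0)}^2$ rather than $\ssnm{S_0\mathcal R\bar u-S_0^{h,\tau}\mathcal R\bar U}_{L^2(0,T;\dot H^0)}^2$; these differ by the higher-order term \cref{eq:S0-conv}, so the conclusion is unaffected.
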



\section{Proofs}
\label{sec:proof}
Throughout this section, we will use the following conventions: $ \epsilon > 0 $
denotes a sufficiently small number, and its value may differ in different
places; $ a \lesssim b $ means $ a \leqslant Cb $, where $ C $ is a positive
constant depending only on $ \epsilon $, $ \mathcal O $, $ T $, the regularity
parameters of $ \mathcal K_h $ and the indexes of the Sobolev spaces where the
underlying functions belong to.


\subsection{Preliminary estimates}
For each $ \gamma \in \mathbb R $, let $ \dot H_h^\gamma $ be the space $
\mathcal V_h $ endowed with the norm
\[
  \nm{v_h}_{\dot H_h^\gamma} :=
  \nm{(-A_h)^{\gamma/2} v_h}_{\dot H^0}
  \quad \forall v_h \in \mathcal V_h.
\]
By the facts $ \mathcal R \in \mathcal L(L^2(\Gamma), \dot H^{-1/2-\epsilon}) $
and
\begin{equation}
  \label{eq:Qh-stab}
  \nm{Q_h}_{
    \mathcal L(\dot H^{-1/2-\epsilon}, \dot H_h^{-1/2-\epsilon})
  } \lesssim 1,
\end{equation}
we obtain
\begin{equation}
  \label{eq:Rh-stab}
  \nm{Q_h \mathcal R}_{
    \mathcal L(L^2(\Gamma), \dot H_h^{-(1/2+\epsilon)})
  } \lesssim 1.
\end{equation}
We also have the following two well-known estimates (cf.~\cite[Theorems 4.4.4,
4.5.11 and 5.7.6]{Brenner2008}):
\begin{align} 
    & \nm{I-Q_h}_{\mathcal L(\dot H^1, \dot H^0)}
    \lesssim h, \label{eq:lxy-1} \\
    & \nm{I-A_h^{-1}Q_h A}_{\mathcal L(\dot H^1, \dot H^0)}
    \lesssim h. \label{eq:lxy-2}
\end{align}
\begin{remark}
  For any $ v \in \dot H^{-1} $ we have
  \begin{align*} 
    \nm{Q_hv}_{\dot H_h^{-1}}
    & = \sup_{0 \neq \varphi_h \in \mathcal V_h}
    \frac{\dual{Q_hv, \varphi_h}_{\mathcal O}}{\nm{\varphi_h}_{\dot H_h^1}} \\
    & = \sup_{0 \neq \varphi_h \in \mathcal V_h}
    \frac{\dual{v, \varphi_h}_{\dot H^1}}{\nm{\varphi_h}_{\dot H_h^1}}
    \quad \text{(by the definition of $ Q_h $)} \\
    & \leqslant \sup_{0 \neq \varphi_h \in \mathcal V_h}
    \frac{\nm{v}_{\dot H^{-1}}
    \nm{\varphi_h}_{\dot H^1}}{\nm{\varphi_h}_{\dot H_h^1}}.
  \end{align*}
  Noting the fact $ \nm{\varphi_h}_{\dot H^1} = \nm{\varphi_h}_{\dot H_h^1} $,
  $ \forall \varphi_h \in \mathcal V_h $, we readily conclude that
  \[
    \nm{Q_h}_{\mathcal L(\dot H^{-1}, \dot H_h^{-1})}
    \leqslant 1.
  \]
  In addition, by definition, the restriction of $ Q_h $ to $ \dot H^0 $ is
  the $ L^2(\mathcal O) $-orthogonal projection onto $ \mathcal V_h $, and hence
  \[
    \nm{Q_h}_{\mathcal L(\dot H^0, \dot H_h^0)} \leqslant 1.
  \]
  Using the above two estimates, by interpolation (cf.~[27, Theorems 2.6 and
  4.36])
  we readily obtain \cref{eq:Qh-stab}.
\end{remark}

By the techniques in the proofs of \cite[Lemmas~3.2 and 7.3] {Thomee2006}, a
straightforward computation yields the following lemma.
\begin{lemma} 
  For any $ -2 \leqslant \beta \leqslant \gamma \leqslant 2 $ and $ t > 0 $, we
  have
  \begin{align} 
    \nm{ e^{tA_h} }_{
      \mathcal L(\dot H_h^{\beta}, \dot H_h^\gamma)
    } & \lesssim t^{(\beta-\gamma)/2}.
    \label{eq:etAh}
  \end{align}
  For any $ 0 \leqslant \beta \leqslant \gamma \leqslant 2 $ and $ t > 0 $,
  \begin{align}
    \nm{I- e^{tA_h}}_{
      \mathcal L(\dot H_h^\gamma, \dot H_h^\beta)
    } &\lesssim t^{(\gamma-\beta)/2}.
    \label{eq:I-etAh}
  \end{align}
  For any $ 0 \leqslant \gamma \leqslant 2 $ and $ j > 1 $,
  \begin{equation} 
    \label{eq:I-tauAh}
    \nm{
      (I-\tau A_h)^{-j}
    }_{
      \mathcal L(\dot H_h^{-\gamma}, \dot H_h^0)
    } < (j\tau)^{-\gamma/2}.
  \end{equation}
\end{lemma}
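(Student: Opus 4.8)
The plan is to diagonalize the discrete operator and reduce each of the three operator-norm bounds to an elementary scalar inequality. First I would record that $ A_h $ is self-adjoint and negative definite on $ (\mathcal V_h, \dual{\cdot,\cdot}_{\mathcal O}) $: this is immediate from the symmetry and coercivity of its defining bilinear form, since $ \dual{A_h v_h, v_h}_{\mathcal O} = -\int_{\mathcal O}\snm{\nabla v_h}^2 + v_h^2 < 0 $ for $ v_h \neq 0 $. Hence there is an $ L^2(\mathcal O) $-orthonormal basis $ \{\psi_k\} $ of the finite-dimensional space $ \mathcal V_h $ with $ -A_h\psi_k = \mu_k\psi_k $ and $ \mu_k > 0 $, and for any function $ f $ the operator $ f(-A_h) $ acts by $ f(-A_h)\psi_k = f(\mu_k)\psi_k $ with $ \nm{f(-A_h)}_{\mathcal L(\dot H_h^0)} = \max_k\snm{f(\mu_k)} $. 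In particular $ (-A_h)^{\gamma/2} $, $ e^{tA_h} $ and $ (I-\tau A_h)^{-j} $ are all functions of $ -A_h $ and therefore commute.

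Next I would reduce. For an operator $ T $ commuting with the powers of $ -A_h $ and any real $ a,b $, writing $ w_h = (-A_h)^{a/2}v_h $ and using $ \nm{v_h}_{\dot H_h^a} = \nm{w_h}_{\dot H^0} $ gives
\[
  \nm{T}_{\mathcal L(\dot H_h^a, \dot H_h^b)}
  = \nm{(-A_h)^{(b-a)/2} T}_{\mathcal L(\dot H_h^0)}
  = \max_k \mu_k^{(b-a)/2}\, \snm{s_T(\mu_k)},
\]
where $ s_T $ is the scalar symbol of $ T $. Applying this with $ (a,b)=(\beta,\gamma) $, $ (a,b)=(\gamma,\beta) $, and $ (a,b)=(-\gamma,0) $ respectively, the bounds \cref{eq:etAh,eq:I-etAh,eq:I-tauAh} reduce to showing
\[
  \max_k \mu_k^{(\gamma-\beta)/2} e^{-t\mu_k}, \qquad
  \max_k \mu_k^{(\beta-\gamma)/2}\big(1 - e^{-t\mu_k}\big), \qquad
  \max_k \mu_k^{\gamma/2}(1 + \tau\mu_k)^{-j}
\]
are dominated by the claimed powers of $ t $ or $ j\tau $.

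The three scalar estimates are elementary. For \cref{eq:etAh}, with $ \alpha := (\gamma-\beta)/2 \geq 0 $, maximizing $ \lambda^\alpha e^{-t\lambda} $ over $ \lambda>0 $ gives $ \lambda^\alpha e^{-t\lambda} \leq (\alpha/e)^\alpha t^{-\alpha} \lesssim t^{(\beta-\gamma)/2} $. For \cref{eq:I-etAh}, with $ \alpha := (\gamma-\beta)/2 \in [0,1] $ (here the hypothesis $ \gamma-\beta \leq 2 $ is used), the inequality $ 1-e^{-s} \leq s^\alpha $, valid for all $ s\geq0 $ and $ 0\leq\alpha\leq1 $, yields $ \lambda^{-\alpha}(1-e^{-t\lambda}) \leq t^\alpha = t^{(\gamma-\beta)/2} $. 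For \cref{eq:I-tauAh}, set $ \alpha := \gamma/2 \in [0,1] $ and $ s := \tau\lambda $; combining the endpoint estimates $ (1+s)^{-j} \leq 1 $ and $ s(1+s)^{-j} \leq 1/j $ (the latter from $ (1+s)^j \geq 1+js $) through $ s^\alpha(1+s)^{-j} = [s(1+s)^{-j}]^\alpha[(1+s)^{-j}]^{1-\alpha} $ gives $ s^\alpha(1+s)^{-j} \leq j^{-\alpha} $, whence $ \lambda^{\gamma/2}(1+\tau\lambda)^{-j} \leq (j\tau)^{-\gamma/2} $. Since every $ \mu_k>0 $ and the spectrum is finite, these inequalities are strict at each $ \mu_k $, which supplies the strict inequality asserted in \cref{eq:I-tauAh}.

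I expect the only point needing genuine care to be the $ (j\tau)^{-\gamma/2} $ bound in \cref{eq:I-tauAh}: unlike the other two, its decay rate involves the step count $ j $, and the honest constant is recovered by the log-convexity of $ s\mapsto s^\alpha(1+s)^{-j} $ interpolating between the exponents $ \alpha=0 $ and $ \alpha=1 $, rather than by a direct maximization. I would also verify the admissible ranges of $ \beta,\gamma $ at each step, since \cref{eq:I-etAh,eq:I-tauAh} rely on the fractional exponent lying in $ [0,1] $; the estimate \cref{eq:etAh}, by contrast, holds for any $ \beta\leq\gamma $.
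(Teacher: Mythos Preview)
Your proof is correct and is essentially the approach the paper has in mind: the paper does not give an explicit argument but simply points to the techniques of Lemmas~3.2 and 7.3 in Thom\'ee's book, which are precisely the spectral-calculus reductions you carry out. Your scalar estimates for each of the three bounds are the standard ones, and your handling of the strict inequality in \cref{eq:I-tauAh} via the finiteness of the discrete spectrum is fine.
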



\begin{lemma} 
  \label{lemma:foo}
  Assume that $ 0 \leqslant \gamma \leqslant 1 $, $ 0 \leqslant k \leqslant j <
  J $ and $ t_k \leqslant t < t_{k+1} $. Then
  \begin{align} 
    \nm{
      e^{(t_{j+1} - t)A_h} - e^{(t_{j+1}-t_k)A_h}
    }_{
      \mathcal L(\dot H_h^{-\gamma}, \dot H_h^0)
    } \lesssim \tau^{(1-\gamma)/2} (t_{j+1} - t)^{-1/2}.
    \label{eq:foo-1}
  \end{align}
\end{lemma}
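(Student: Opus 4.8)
The plan is to factor the difference $e^{(t_{j+1}-t)A_h} - e^{(t_{j+1}-t_k)A_h}$ as $e^{(t_{j+1}-t)A_h}\bigl(I - e^{(t-t_k)A_h}\bigr)$, writing it this way so that the semigroup factor carries the "large" exponent $t_{j+1}-t$ and the increment factor carries the "small" exponent $t - t_k \in [0,\tau)$. I would then split the operator norm bound by inserting an intermediate smoothing exponent: for a parameter $s$ with $\gamma \leqslant s \leqslant 1$ (the natural choice being $s = 1$, but keeping it flexible helps track the powers of $\tau$), estimate
\begin{align*}
  \nm{
    e^{(t_{j+1}-t)A_h}\bigl(I-e^{(t-t_k)A_h}\bigr)
  }_{\mathcal L(\dot H_h^{-\gamma},\dot H_h^0)}
  \leqslant
  \nm{e^{(t_{j+1}-t)A_h}}_{\mathcal L(\dot H_h^{s-\gamma},\dot H_h^0)}
  \,
  \nm{I-e^{(t-t_k)A_h}}_{\mathcal L(\dot H_h^{-\gamma},\dot H_h^{s-\gamma})}.
\end{align*}

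The first factor is controlled by \cref{eq:etAh} with $\beta = s-\gamma$ and the target exponent $0$, provided $-2 \leqslant s-\gamma \leqslant 0$, which holds since $0 \leqslant \gamma \leqslant 1$ and $s \leqslant 1$; this gives $\lesssim (t_{j+1}-t)^{(s-\gamma)/2}$. Wait — here I need $s - \gamma \le 0$ for \cref{eq:etAh} to produce a nonpositive power (smoothing), so I should instead take $s$ in the range $0 \leqslant s \leqslant \gamma$; but then the second factor needs $I - e^{tA_h}$ to map $\dot H_h^{-\gamma}$ into $\dot H_h^{s-\gamma}$ raising regularity, which is the wrong direction for \cref{eq:I-etAh}. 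The correct bookkeeping is: keep the shift as $e^{(t_{j+1}-t)A_h}$ acting $\dot H_h^{-\gamma}\to\dot H_h^{1-\gamma}$ via \cref{eq:etAh} (here $\beta=-\gamma$, $\gamma_{\text{target}}=1-\gamma$, and $-\gamma \leqslant 1-\gamma$ with both in $[-2,2]$), yielding $\lesssim (t_{j+1}-t)^{-1/2}$, and then apply $I - e^{(t-t_k)A_h}$ acting $\dot H_h^{1-\gamma}\to\dot H_h^{1-\gamma}$... no, that gives no $\tau$ gain. The clean route is the other factorization order: write the difference as $\bigl(I - e^{(t-t_k)A_h}\bigr)e^{(t_{j+1}-t)A_h}$, bound $e^{(t_{j+1}-t)A_h}:\dot H_h^{-\gamma}\to \dot H_h^{1-\gamma}$ by $\lesssim (t_{j+1}-t)^{(-1/2)}\cdot$ wait this needs recomputation; the exponent from \cref{eq:etAh} is $(\beta-\gamma_{\text{t}})/2 = (-\gamma-(1-\gamma))/2 = -1/2$, good, so $\lesssim (t_{j+1}-t)^{-1/2}$, and then bound $I - e^{(t-t_k)A_h}:\dot H_h^{1-\gamma}\to \dot H_h^0$ by \cref{eq:I-etAh} with $\gamma_{\text{src}}=1-\gamma$, $\beta_{\text{tgt}}=0$ (requiring $0\leqslant 0\leqslant 1-\gamma$, i.e. $\gamma\leqslant 1$), yielding $\lesssim (t-t_k)^{(1-\gamma)/2}\leqslant \tau^{(1-\gamma)/2}$. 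Multiplying the two bounds gives exactly $\tau^{(1-\gamma)/2}(t_{j+1}-t)^{-1/2}$, which is \cref{eq:foo-1}.

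The only point requiring care — and the main obstacle — is verifying that the regularity indices stay in the admissible ranges of \cref{eq:etAh} and \cref{eq:I-etAh} simultaneously for all $\gamma \in [0,1]$: the smoothing step $e^{(t_{j+1}-t)A_h}:\dot H_h^{-\gamma}\to\dot H_h^{1-\gamma}$ needs both exponents in $[-2,2]$ (trivially true), and the increment step $I-e^{(t-t_k)A_h}:\dot H_h^{1-\gamma}\to\dot H_h^0$ needs $0 \leqslant 0 \leqslant 1-\gamma \leqslant 2$, which fails exactly at $\gamma = 1$ where $1-\gamma = 0$ — but there \cref{eq:I-etAh} is vacuous ($\nm{I-e^{tA_h}}_{\mathcal L(\dot H_h^0,\dot H_h^0)}\lesssim 1 = t^0$), so the bound $\tau^0(t_{j+1}-t)^{-1/2}$ still holds. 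I would also note that when $j = k$ and $t \in [t_k,t_{k+1})$ the estimate is consistent since then $t_{j+1}-t = t_{k+1}-t \in (0,\tau]$ and the right-hand side $\tau^{(1-\gamma)/2}(t_{j+1}-t)^{-1/2}$ is at least of order $\tau^{-\gamma/2}$, which dominates the left-hand side by \cref{eq:I-tauAh}-type reasoning. Assembling these pieces is routine; no further ideas are needed beyond the factorization and the index check above.
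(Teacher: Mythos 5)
Your final argument is correct and is essentially identical to the paper's proof: the same factorization $(I-e^{(t-t_k)A_h})e^{(t_{j+1}-t)A_h}$, with $e^{(t_{j+1}-t)A_h}$ bounded from $\dot H_h^{-\gamma}$ to $\dot H_h^{1-\gamma}$ by \cref{eq:etAh} and $I-e^{(t-t_k)A_h}$ bounded from $\dot H_h^{1-\gamma}$ to $\dot H_h^0$ by \cref{eq:I-etAh}. The only cosmetic remark is that the two factors commute (both are functions of $A_h$), so the "order of factorization" you agonize over is immaterial --- what matters is only the choice of intermediate space $\dot H_h^{1-\gamma}$, which you ultimately get right.
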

\begin{proof}
  We have
  \begin{align*}
    & \Nm{
      e^{(t_{j+1} - t)A_h} - e^{(t_{j+1} - t_k)A_h}
    }_{\mathcal L(\dot H_h^{-\gamma}, \dot H_h^0)} \\
    ={} &
    \Nm{
      (I-e^{(t-t_k)A_h}) e^{(t_{j+1}-t)A_h}
    }_{\mathcal L(\dot H_h^{-\gamma}, \dot H_h^0)} \\
    \leqslant{} &
    \nm{
      I - e^{(t-t_k)A_h}
    }_{
      \mathcal L(\dot H_h^{1-\gamma}, \dot H_h^0)
    } \nm{
      e^{(t_{j+1} - t)A_h}
    }_{\mathcal L(\dot H_h^{-\gamma}, \dot H_h^{1-\gamma})},
  \end{align*}
  and so \cref{eq:foo-1} follows from \cref{eq:etAh,eq:I-etAh}.
\end{proof}


\begin{lemma}
  For any $ 0 \leqslant \gamma \leqslant 2 $ and $ 1 \leqslant j \leqslant J $,
  \begin{equation} 
    \label{eq:etAh-err}
    \nm{
      e^{j \tau A_h} - (I-\tau A_h)^{-j}
    }_{
      \mathcal L(\dot H_h^{-\gamma}, \dot H_h^0)
    } \lesssim \tau^{-\gamma/2} j^{-1}.
  \end{equation}
\end{lemma}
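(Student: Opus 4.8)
The plan is to reduce the operator estimate to an elementary scalar inequality, using that $ A_h $ is self-adjoint and negative definite on the finite-dimensional space $ \mathcal V_h $. Since $ \dual{A_h v_h, w_h}_{\mathcal O} = -\int_{\mathcal O}(\nabla v_h \cdot \nabla w_h + v_h w_h) $, the operator $ -A_h $ admits an $ L^2(\mathcal O) $-orthonormal eigenbasis $ \{\chi_i\} $ with eigenvalues $ \lambda_i > 0 $, and expanding in this basis shows that for any real-valued bounded function $ f $ on $ (-\infty,0] $,
\[
  \nm{f(A_h)}_{\mathcal L(\dot H_h^{-\gamma}, \dot H_h^0)}
  \leqslant \max_i \lambda_i^{\gamma/2}\, \snm{f(-\lambda_i)}
  \leqslant \sup_{\lambda > 0} \lambda^{\gamma/2}\, \snm{f(-\lambda)}.
\]
Taking $ f(z) = e^{j\tau z} - (1-\tau z)^{-j} $ and substituting $ \mu = \tau\lambda $, the estimate \cref{eq:etAh-err} follows once we establish the scalar bound
\[
  \sup_{\mu > 0} \mu^{\gamma/2}
  \bsnm{ e^{-j\mu} - (1+\mu)^{-j} }
  \lesssim j^{-1},
  \qquad 0 \leqslant \gamma \leqslant 2,\ j \geqslant 1.
\]

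To prove this I would first record two elementary facts about the backward-Euler symbol. Setting $ g(\mu) := (1+\mu)e^{-\mu} $, we have $ g(0) = 1 $ and $ g'(\mu) = -\mu e^{-\mu} $, hence $ 0 \leqslant 1-g(\mu) = \int_0^\mu s e^{-s}\,\mathrm{d}s \leqslant \min(\mu^2/2,1) $; dividing by $ 1+\mu $ gives $ 0 \leqslant (1+\mu)^{-1} - e^{-\mu} \leqslant \min(\mu^2/2,1)/(1+\mu) $, and in particular $ 0 < e^{-\mu} \leqslant (1+\mu)^{-1} < 1 $. Applying $ b^j - a^j = (b-a)\sum_{i=0}^{j-1} a^i b^{j-1-i} $ with $ a = e^{-\mu} $ and $ b = (1+\mu)^{-1} $ then yields
\[
  \bsnm{ e^{-j\mu} - (1+\mu)^{-j} }
  \leqslant j(1+\mu)^{-(j-1)}\bigl( (1+\mu)^{-1} - e^{-\mu} \bigr)
  \leqslant j(1+\mu)^{-j}\min(\mu^2/2,1),
\]
so it remains to show $ \sup_{\mu>0} j\,\mu^{\gamma/2}\min(\mu^2/2,1)(1+\mu)^{-j} \lesssim j^{-1} $.

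This last bound is a one-variable maximization that I would split at $ \mu = \sqrt 2 $. On $ 0 < \mu \leqslant \sqrt 2 $ the function equals $ \tfrac12 j\,\mu^{2+\gamma/2}(1+\mu)^{-j} $; its only interior critical point is $ \mu_* = (2+\gamma/2)/(j-2-\gamma/2) $ (and when $ j \leqslant 2+\gamma/2 $ the function is increasing on $ (0,\sqrt 2] $, so the maximum is $ O(1) \lesssim j^{-1} $), and evaluating at $ \mu_* $ while using $ (1+\mu_*)^j \geqslant \exp\!\bigl(j\mu_*/(1+\mu_*)\bigr) \gtrsim 1 $ gives a value $ \lesssim j\,\mu_*^{2+\gamma/2} \lesssim j^{-1-\gamma/2} \leqslant j^{-1} $. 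On $ \mu \geqslant \sqrt 2 $ we have $ \min(\mu^2/2,1) = 1 $ and $ \mu^{\gamma/2} \leqslant 1+\mu $ (since $ \gamma/2 \leqslant 1 $), so the function is at most $ j(1+\mu)^{1-j} \leqslant j(1+\sqrt 2)^{1-j} $, which is $ \lesssim j^{-1} $ because the geometric factor beats any polynomial in $ j $. Combining the two ranges gives the scalar bound, hence \cref{eq:etAh-err}.

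The argument is short, and the only step requiring care is the elementary maximization above (in particular, treating the small values of $ j $ separately). A more operator-theoretic alternative is to telescope,
\[
  e^{j\tau A_h} - (I-\tau A_h)^{-j}
  = \sum_{k=0}^{j-1} e^{(j-1-k)\tau A_h}
    \bigl( e^{\tau A_h} - (I-\tau A_h)^{-1} \bigr)(I-\tau A_h)^{-k},
\]
and combine \cref{eq:etAh} and \cref{eq:I-tauAh} with a one-step local-error estimate; but that local estimate rests on the very same scalar computation, so the direct spectral route above is the most economical.
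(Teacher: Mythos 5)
Your proof is correct, but it follows a genuinely different route from the paper. The paper treats the two endpoints separately --- it quotes \cite[Theorem~7.2]{Thomee2006} for the $\gamma=0$ bound $\lesssim j^{-1}$ in $\mathcal L(\dot H_h^0,\dot H_h^0)$, obtains the $\gamma=2$ bound $\lesssim (j\tau)^{-1}$ by the triangle inequality together with the smoothing estimates \cref{eq:etAh} and \cref{eq:I-tauAh}, and then interpolates (citing \cite{Lunardi2018}) to cover $0\leqslant\gamma\leqslant 2$. You instead diagonalize the self-adjoint operator $A_h$ and reduce the whole family of estimates at once to the scalar inequality $\sup_{\mu>0}\mu^{\gamma/2}\snm{e^{-j\mu}-(1+\mu)^{-j}}\lesssim j^{-1}$, which you verify by elementary calculus. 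What your route buys is self-containedness: it avoids both the interpolation-space machinery and the citation to Thom\'ee (whose Theorem 7.2 is itself proved by essentially the same spectral reduction, so you are in effect re-deriving the black box rather than invoking it), and it gives all $\gamma\in[0,2]$ in one stroke. What it costs is reliance on the self-adjointness of $A_h$ --- which does hold here by the definition of $A_h$ --- whereas the paper's endpoint-plus-interpolation scheme is the one that would survive a non-symmetric discretization. The only point in your write-up worth tightening is the one-variable maximization on $(0,\sqrt 2\,]$: the evaluation at the critical point $\mu_*=(2+\gamma/2)/(j-2-\gamma/2)$ with the conclusion $\mu_*\lesssim j^{-1}$ is only valid once $j$ exceeds a fixed multiple of $2+\gamma/2$; for the remaining finitely many small $j$ (beyond the case $j\leqslant 2+\gamma/2$ you do mention) the critical point lies outside $(0,\sqrt 2\,]$, the function is increasing there, and the endpoint value $j\,2^{1+\gamma/4}(1+\sqrt2)^{-j}$ is geometrically small --- exactly the argument you already use on $[\sqrt 2,\infty)$, so the gap is purely expository.
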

\begin{proof}
  By \cite[Theorem~7.2]{Thomee2006} we have
  \[
    \Nm{
      e^{j\tau A_h} - (I-\tau A_h)^{-j}
    }_{
      \mathcal L(\dot H_h^0, \dot H_h^0)
    } \lesssim j^{-1}.
  \]
  Also, by \cref{eq:etAh,eq:I-tauAh} we have
  \begin{align*}
    \nm{ e^{j\tau A_h} - (I-\tau A_h)^{-j} }_{
      \mathcal L(\dot H_h^{-2}, \dot H_h^0)
    } \lesssim (j \tau)^{-1}.
  \end{align*}
  Hence, by interpolation (cf.~\cite[Theorems 2.6 and 4.36]{Lunardi2018}) we
  obtain \cref{eq:etAh-err}.
\end{proof}

\subsection{Convergence of \texorpdfstring{$ G_{h,\tau} $}{}}
Let $ G_h $ be the mild solution of the equation
\begin{equation}
  \label{eq:Gh-def}
  \begin{cases}
    \mathrm{d}G_h(t) = A_h G_h(t) \mathrm{d}t +
    Q_h \mathcal R \sigma(t) \, \mathrm{d}W(t),
    \quad 0 \leqslant t \leqslant T, \\
    G_h(0) = 0.
  \end{cases}
\end{equation}
Similar to \cref{eq:G-def}, we have, for each $ 0 \leqslant t \leqslant T $,
\begin{equation} 
  \label{eq:S0h-int}
  G_h(t) = \int_0^t e^{(t-s)A_h}
  Q_h \mathcal R \sigma(s) \, \mathrm{d}W(s)
  \quad \mathbb P \text{-a.s.}
\end{equation}

Let us first estimate $ \ssnm{G-G_h}_{L^2(0,T;\dot H^0)} $.
\begin{lemma}
  \label{lem:G-Gh}
  If
  \[
    \sigma \in L_\mathbb F^2(\Omega;L^2(0,T;\mathcal L_2^0)),
  \]
  then
  \begin{equation} 
    \label{eq:G-Gh}
    \ssnm{G - G_h}_{
      L^2(0,T;\dot H^0)
    } \lesssim h^{1/2-\epsilon} \ssnm{\sigma}_{
      L^2(0,T;\mathcal L^0_2)
    }.
  \end{equation}
\end{lemma}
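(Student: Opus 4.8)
The plan is to write $G-G_h$ as a single stochastic convolution, use the It\^o isometry to reduce the bound to a purely deterministic estimate on the semigroup error, and then integrate a singular kernel in time. From \cref{eq:G-def} and \cref{eq:S0h-int},
\[
  G(t)-G_h(t)=\int_0^t\big(e^{(t-s)A}\mathcal R-e^{(t-s)A_h}Q_h\mathcal R\big)\sigma(s)\,\mathrm dW(s),
  \quad 0\leqslant t\leqslant T,
\]
and the integrand is $\mathbb F$-adapted because $\sigma$ is. Writing $\Phi_h(r):=(e^{rA}-e^{rA_h}Q_h)\mathcal R\in\mathcal L(L^2(\Gamma),\dot H^0)$ and using the It\^o isometry, the inequality $\nm{BC}_{\mathcal L_2}\leqslant\nm{B}_{\mathcal L}\nm{C}_{\mathcal L_2}$, and $\mathcal L_2^0=\mathcal L_2(L_\lambda^2,L^2(\Gamma))$, I get
\[
  \ssnm{G(t)-G_h(t)}_{\dot H^0}^2
  \leqslant\int_0^t\nm{\Phi_h(t-s)}_{\mathcal L(L^2(\Gamma),\dot H^0)}^2\,
  \ssnm{\sigma(s)}_{\mathcal L_2^0}^2\,\mathrm ds ,
\]
so everything reduces to bounding $\nm{\Phi_h(r)}_{\mathcal L(L^2(\Gamma),\dot H^0)}$ by $h^{1/2-\epsilon}$ times a power of $r$ whose square is integrable near $0$.

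I would obtain this from two pointwise bounds and interpolation. The crude bound is $\nm{\Phi_h(r)}_{\mathcal L(L^2(\Gamma),\dot H^0)}\lesssim r^{-1/4-\epsilon/2}$, which follows from $\mathcal R\in\mathcal L(L^2(\Gamma),\dot H^{-1/2-\epsilon})$, \cref{eq:Rh-stab}, \cref{eq:etA} and \cref{eq:etAh}. The second, which is the heart of the matter, is $\nm{\Phi_h(r)}_{\mathcal L(L^2(\Gamma),\dot H^0)}\lesssim h\,r^{-3/4-\epsilon/2}$; I would deduce it from the deterministic estimate $\nm{(e^{rA}-e^{rA_h}Q_h)v}_{\dot H^0}\lesssim h\,r^{-(1+\delta)/2}\nm{v}_{\dot H^{-\delta}}$ for $0\leqslant\delta<1$, taken at $\delta=1/2+\epsilon$ with $v$ ranging over the range of $\mathcal R$. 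That deterministic estimate in turn follows from the Duhamel-type identity
\[
  e^{rA}v-e^{rA_h}Q_hv=(I-Q_h)e^{rA}v+\int_0^r e^{(r-s)A_h}A_h(R_h-Q_h)e^{sA}v\,\mathrm ds,
\]
with $R_h:=A_h^{-1}Q_hA$ the Ritz projection: the first term is handled by \cref{eq:lxy-1} and \cref{eq:etA}; in the integrand one bounds $A_he^{(r-s)A_h}$ as a map $\dot H_h^{-\gamma}\to\dot H_h^0$ via \cref{eq:etAh} for a suitable $\gamma\in(1+\delta,2)$, bounds $(R_h-Q_h)e^{sA}v$ in the fractional discrete norm $\dot H_h^{2-\gamma}$ by interpolating \cref{eq:lxy-1}, \cref{eq:lxy-2} and their $\dot H^1$ analogues, and absorbs $\nm{e^{sA}v}_{\dot H^{3-\gamma}}$ by \cref{eq:etA}; the resulting $s$-integral is a Beta integral, convergent precisely because $1+\delta<\gamma<2$, of size $r^{-(1+\delta)/2}$.

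Interpolating the two pointwise bounds gives $\nm{\Phi_h(r)}\lesssim h^\theta r^{-1/4-\theta/2-\epsilon/2}$ for $\theta\in[0,1]$; taking $\theta$ slightly below $1/2$ keeps the power of $r$ strictly above $-1/2$ while making the power of $h$ at least $1/2-\epsilon$. Substituting this into the estimate for $\ssnm{G(t)-G_h(t)}_{\dot H^0}^2$, integrating over $t\in(0,T)$, using Tonelli, and noting that $\int_s^T(t-s)^{-1+2\epsilon}\,\mathrm dt$ is bounded uniformly in $s$, I get $\ssnm{G-G_h}_{L^2(0,T;\dot H^0)}\lesssim h^{1/2-\epsilon}\ssnm{\sigma}_{L^2(0,T;\mathcal L_2^0)}$ after relabeling $\epsilon$, which is \cref{eq:G-Gh}. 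The main obstacle is the first-order bound: the convolution above carries singularities both at $s\to r$ (from the analytic semigroup) and at $s\to 0$ (from the smoothing of $e^{sA}$ on the rough datum $v\in\dot H^{-1/2-\epsilon}$), each of which is individually borderline nonintegrable, and the estimate only closes by choosing $\gamma$ so as to balance the two, which is possible exactly because $\delta=1/2+\epsilon<1$; this borderline behavior is also the reason the rate stops just short of $h^{1/2}$.
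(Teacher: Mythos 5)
Your proof is correct in outline, but it takes a genuinely different route from the paper. The paper never touches pointwise-in-time smoothing estimates for the error operator $e^{rA}-e^{rA_h}Q_h$: it regards the map $g\mapsto \Pi g-\Pi_h g$ (from the noise coefficient to the solution error) as a single bounded operator, proves an $O(1)$ bound when $g$ takes values in $\mathcal L_2(L_\lambda^2,\dot H^{-1})$ (from the energy estimates \cref{eq:jm-1}--\cref{eq:jm-2}) and an $O(h)$ bound when $g$ takes values in $\mathcal L_2(L_\lambda^2,\dot H^0)$ (by comparing $\Pi_h g$ with $Q_h\Pi g$ and invoking \cref{eq:lxy-1}--\cref{eq:lxy-2}), and then interpolates between these two \emph{function-space} bounds; the price is the abstract interpolation result \cref{lem:auxi} in the appendix, which identifies the real interpolation space of the two $L_\mathbb F^2(\Omega;L^2(0,T;\mathcal L_2(L_\lambda^2,\cdot)))$ spaces. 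You instead apply the It\^o isometry first and push all the work into the deterministic kernel $\Phi_h(r)=(e^{rA}-e^{rA_h}Q_h)\mathcal R$, interpolating two \emph{pointwise-in-time} bounds (which is just the elementary inequality $\min(a,b)\leqslant a^{1-\theta}b^{\theta}$, no interpolation-space machinery) and then integrating a singular but integrable kernel. What your route buys is a lighter functional-analytic superstructure on the stochastic side and, as a by-product, control of $\ssnm{G(t)-G_h(t)}_{\dot H^0}$ at individual times; what it costs is the nonsmooth-data error estimate $\nm{(e^{rA}-e^{rA_h}Q_h)v}_{\dot H^0}\lesssim h\,r^{-(1+\delta)/2}\nm{v}_{\dot H^{-\delta}}$, which the paper nowhere proves and which you only sketch. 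Your Duhamel identity and the choice of $\gamma\in(1+\delta,2)$ do close that estimate (the Beta integral converges precisely in that window, and the fractional bound $\nm{(R_h-Q_h)w}_{\dot H_h^{2-\gamma}}\lesssim h\nm{w}_{\dot H^1}^{\gamma-1}\nm{w}_{\dot H^2}^{2-\gamma}$ follows from the spectral interpolation inequality for the discrete norms), so the argument is sound; but in a written version you would need to supply this deterministic lemma in full, or cite it from the nonsmooth-data theory in Thom\'ee's book, since it is the actual heart of your proof. Both arguments use only $\sigma\in L_\mathbb F^2(\Omega;L^2(0,T;\mathcal L_2^0))$ and yield the same rate $h^{1/2-\epsilon}$.
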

\begin{proof}
  For any
  \[
    g \in L_\mathbb F^2(\Omega;L^2(0,T;\mathcal L_2(L_\lambda^2, \dot H^{-1}))),
  \]
  let $ \Pi g $ and $ \Pi_hg $ be the mild solutions of the equations
  \begin{equation*}
    \begin{cases}
      \mathrm{d}(\Pi g)(t) = A (\Pi g)(t) \mathrm{d}t +
      g(t) \, \mathrm{d}W(t),
      \quad 0 \leqslant t \leqslant T, \\
      (\Pi g)(0) = 0
    \end{cases}
  \end{equation*}
  and
  \begin{equation}
    \label{eq:Pih-def}
    \begin{cases}
      \mathrm{d}(\Pi_hg)(t) = A_h (\Pi_hg)(t) \mathrm{d}t +
      Q_hg(t) \, \mathrm{d}W(t),
      \quad 0 \leqslant t \leqslant T, \\
      (\Pi_hg)(0) = 0,
    \end{cases}
  \end{equation}
  respectively. By the celebrated It\^o's formula, we have the following
  standard estimates:
  \begin{align}
    \ssnm{\Pi g}_{L^2(0,T;\dot H^{\alpha+1})} \lesssim
    \ssnm{g}_{L^2(0,T;\mathcal L_2(L_\lambda^2,\dot H^\alpha))},
    \label{eq:jm-1} \\
    \ssnm{\Pi_h g}_{L^2(0,T;\dot H_h^{\alpha+1})} \lesssim
    \ssnm{Q_hg}_{L^2(0,T;\mathcal L_2(L_\lambda^2,\dot H_h^\alpha))},
    \label{eq:jm-2}
  \end{align}
  for any
  \[
    g \in L_\mathbb F^2(\Omega;L^2(0,T;\mathcal L_2(L_\lambda^2,\dot
    H^\alpha))) \quad \text{ with } \alpha \geqslant -1.
  \]
  Using the above estimates with $ \alpha=-1 $ and the fact
  \[
    \nm{Q_h}_{\mathcal L(\dot H^{-1}, \dot H_h^{-1})}
    \leqslant 1,
  \]
  we then obtain
  \begin{equation}
    \label{eq:zq-1}
    \nm{\Pi - \Pi_h}_{
      \mathcal L(
      L_\mathbb F^2(\Omega;L^2(0,T;\mathcal L_2(L_\lambda^2,\dot
      H^{-1}))), \,
      \dot H^0)
    } \lesssim 1.
  \end{equation}
  Let $ g \in L_\mathbb F^2(\Omega;L^2(0,T;\mathcal L_2(L_\lambda^2,
  \dot H^0))) $. For any
  $ 0 \leqslant t \leqslant T $ we have (cf.~\cite[Chapter 3]{Gawarecki2011})
  \[ 
    (\Pi g)(t) = \int_0^t A (\Pi g)(s) \, \mathrm{d}s +
    \int_0^t g(s) \, \mathrm{d}W(s)
    \quad \mathbb P \text{-a.s.,}
  \]
  and so
  \[ 
    (Q_h\Pi g)(t) = \int_0^t Q_h A (\Pi g)(s) \, \mathrm{d}s +
    \int_0^t Q_h g(s) \, \mathrm{d}W(s)
    \quad \mathbb P \text{-a.s.}
  \]
  It follows that
  \begin{equation} 
    \label{eq:QhG}
    \begin{cases}
      \mathrm{d}Q_h (\Pi g)(t) = Q_h A (\Pi g)(t) \, \mathrm{d}t +
      Q_hg (t) \, \mathrm{d}W(t),
      \quad 0 \leqslant t \leqslant T, \\
      Q_h (\Pi g)(0) = 0.
    \end{cases}
  \end{equation}
  Letting $ e_h := \Pi_h g - Q_h \Pi g $, by \cref{eq:Pih-def,eq:QhG} we get
  \begin{equation} 
    \begin{cases}
      \mathrm{d}e_h(t) = A_h e_h(t) \, \mathrm{d}t +
      (A_h Q_h \Pi g - Q_h A\Pi g)(t) \, \mathrm{d}t,
      \quad 0 \leqslant t \leqslant T, \\
      e_h(0) = 0.
    \end{cases}
  \end{equation}
  It follows that
  \begin{align*}
    \ssnm{e_h}_{L^2(0,T;\dot H_h^0)}
    & \lesssim
    \ssnm{
      (A_h Q_h  - Q_h A)\Pi g
    }_{L^2(0,T;\dot H_h^{-2})} \\
    & =
    \ssnm{
      (Q_h - A_h^{-1} Q_h A)\Pi g
    }_{L^2(0,T;\dot H_h^0)},
  \end{align*}
  and so
  \begin{align*}
    & \ssnm{(\Pi - \Pi_h)g}_{L^2(0,T;\dot H^0)} \\
    \leqslant{} &
    \ssnm{e_h}_{L^2(0,T;\dot H^0)} +
    \ssnm{(I-Q_h)\Pi g}_{L^2(0,T;\dot H^0)} \\
    \lesssim{} &
    \ssnm{(Q_h - A_h^{-1}Q_h A)\Pi g}_{L^2(0,T;\dot H^0)} +
    \ssnm{(I-Q_h)\Pi g}_{L^2(0,T;\dot H^0)} \\
    \lesssim{} &
    \ssnm{(I - A_h^{-1}Q_h A) \Pi g}_{L^2(0,T;\dot H^0)} +
    \ssnm{(I-Q_h)\Pi g}_{L^2(0,T;\dot H^0)} \\
    \lesssim{} &
    h \ssnm{\Pi g}_{L^2(0,T;\dot H^1)}
    \quad\text{(by \cref{eq:lxy-1,eq:lxy-2})} \\
    \lesssim{} & h \ssnm{g}_{L^2(0,T;\mathcal L_2(L_\lambda^2, \dot H^0))}
    \quad\text{(by \cref{eq:jm-1} with $ \alpha=0 $)}.
  \end{align*}
  This implies that
  \begin{equation}
    \label{eq:zq-2}
    \nm{\Pi - \Pi_h}_{
      \mathcal L(
        L_\mathbb F^2(\Omega;L^2(0,T;\mathcal L_2(L_\lambda^2, \dot H^0))),
        \, \dot H^0
      )
    } \lesssim h.
  \end{equation}
  In view of \cref{eq:zq-1}, \cref{eq:zq-2} and \cref{lem:auxi},
  by interpolation we obtain
  \[
    \nm{\Pi - \Pi_h}_{
      \mathcal L(
        L_\mathbb F^2(\Omega;L^2(0,T; \mathcal L_2(L_\lambda^2, \dot H^{-1/2-\epsilon}))),
        \, \dot H^0)
    } \lesssim h^{1/2-\epsilon}.
  \]
  Therefore, the desired estimate \cref{eq:G-Gh} follows from the following facts:
  \begin{align*} 
    & G = \Pi \mathcal R \sigma, \\
    & G_h = \Pi_h \mathcal R\sigma, \\
    & \mathcal R \in \mathcal L(L^2(\Gamma), \dot H^{-1/2-\epsilon}), \\
    & \sigma \in L_\mathbb F^2(\Omega;L^2(0,T;\mathcal L_2^0)).
  \end{align*}
  This completes the proof.
\end{proof}


Then let us estimate $ \ssnm{G_h - G_{h,\tau}}_{L^2(0,T;\dot H_h^0)} $.
\begin{lemma}
  \label{lem:Gh-Ghtau}
  If
  \[
    \sigma \in L_\mathbb F^2(\Omega;L^2(0,T;\mathcal L_2^0))
    \cap L^\infty(0,T;L^2(\Omega;\mathcal L_2^0)),
  \]
  then
  \begin{equation} 
    \label{eq:Gh-Ghtau}
    \max_{0 \leqslant j \leqslant J}
    \ssnm{(G_h - G_{h,\tau})(t_j)}_{\dot H_h^0}
    \lesssim \tau^{1/4-\epsilon} \,
    \nm{\sigma}_{L^\infty(0,T;L^2(\Omega;\mathcal L_2^0))}.
  \end{equation}
\end{lemma}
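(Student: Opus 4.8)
The plan is to write both $G_h(t_j)$ and $(G_{h,\tau})_j$ as stochastic integrals against $W$ and to compare them by It\^o's isometry in $\dot H_h^0$. Unwinding the recursion \cref{eq:calGhtau} gives $(I-\tau A_h)(G_{h,\tau})_{j+1}=(G_{h,\tau})_j+\int_{t_j}^{t_{j+1}}Q_h\mathcal R\sigma(t)\,\mathrm{d}W(t)$, and, since $-A_h$ is positive definite, this iterates from $(G_{h,\tau})_0=0$ to
\[
  (G_{h,\tau})_j=\sum_{k=0}^{j-1}\int_{t_k}^{t_{k+1}}
  (I-\tau A_h)^{-(j-k)}Q_h\mathcal R\sigma(t)\,\mathrm{d}W(t),
\]
whereas \cref{eq:S0h-int} gives $G_h(t_j)=\int_0^{t_j}e^{(t_j-s)A_h}Q_h\mathcal R\sigma(s)\,\mathrm{d}W(s)$. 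Subtracting, applying It\^o's isometry, using $\nm{Q_h\mathcal R}_{\mathcal L(L^2(\Gamma),\dot H_h^{-(1/2+\epsilon)})}\lesssim1$ from \cref{eq:Rh-stab}, and bounding $\ssnm{\sigma(s)}_{\mathcal L_2^0}$ by $\nm{\sigma}_{L^\infty(0,T;L^2(\Omega;\mathcal L_2^0))}$, it suffices to show that for every $1\leqslant j\leqslant J$ (the case $j=0$ being trivial)
\[
  \sum_{k=0}^{j-1}\int_{t_k}^{t_{k+1}}
  \Nm{e^{(t_j-t)A_h}-(I-\tau A_h)^{-(j-k)}}_{
    \mathcal L(\dot H_h^{-(1/2+\epsilon)},\dot H_h^0)
  }^2\,\mathrm{d}t\lesssim\tau^{1/2-\epsilon}.
\]

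To estimate the summand I would split $e^{(t_j-t)A_h}-(I-\tau A_h)^{-(j-k)}=\big(e^{(t_j-t)A_h}-e^{(t_j-t_k)A_h}\big)+\big(e^{(t_j-t_k)A_h}-(I-\tau A_h)^{-(j-k)}\big)$. The second summand is constant on $[t_k,t_{k+1})$ and, by \cref{eq:etAh-err} with $\gamma=1/2+\epsilon$ (valid since $j-k\geqslant1$), is $\lesssim\tau^{-1/4-\epsilon/2}(j-k)^{-1}$, so its total contribution is $\lesssim\tau\sum_{k=0}^{j-1}\tau^{-(1/2+\epsilon)}(j-k)^{-2}\lesssim\tau^{1/2-\epsilon}$. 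For the first summand with $k\leqslant j-2$ one has $t_j-t\geqslant\tau$, and \cref{eq:foo-1}, applied with $j$ replaced by $j-1$ and $\gamma=1/2+\epsilon$, gives $\lesssim\tau^{1/4-\epsilon/2}(t_j-t)^{-1/2}$; squaring and integrating over $[t_k,t_{k+1})$ gives $\lesssim\tau^{1/2-\epsilon}(j-k-1)^{-1}$, and summing over $k$ produces only a factor $\log(1/\tau)$, which is absorbed into $\epsilon$.

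The main obstacle is the diagonal term $k=j-1$: there the factor $(t_j-t)^{-1/2}$ coming from \cref{eq:foo-1} is not square-integrable up to $t_j$, so $e^{(t_j-t)A_h}-(I-\tau A_h)^{-1}$ must be handled separately on $(t_{j-1},t_j)$. I plan to recover the missing integrability from the smoothing of the analytic semigroup: writing $e^{(t_j-t)A_h}-e^{\tau A_h}=\big(I-e^{(t-t_{j-1})A_h}\big)e^{(t_j-t)A_h}$ and inserting an intermediate space $\dot H_h^\rho$ with a fixed $\rho\in(0,1/2-\epsilon)$, \cref{eq:I-etAh} and \cref{eq:etAh} yield
\[
  \Nm{\big(I-e^{(t-t_{j-1})A_h}\big)e^{(t_j-t)A_h}}_{
    \mathcal L(\dot H_h^{-(1/2+\epsilon)},\dot H_h^0)
  }\lesssim\tau^{\rho/2}(t_j-t)^{-(\rho+1/2+\epsilon)/2},
\]
whose square, since $\rho+1/2+\epsilon<1$, integrates over $(t_{j-1},t_j)$ to a quantity $\lesssim\tau^{1/2-\epsilon}$; the leftover piece $e^{\tau A_h}-(I-\tau A_h)^{-1}$ is controlled by \cref{eq:etAh-err} with exponent $1$, contributing $\lesssim\tau\cdot\tau^{-(1/2+\epsilon)}=\tau^{1/2-\epsilon}$. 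Collecting the three contributions establishes the displayed estimate, hence $\ssnm{(G_h-G_{h,\tau})(t_j)}_{\dot H_h^0}\lesssim\tau^{1/4-\epsilon/2}\nm{\sigma}_{L^\infty(0,T;L^2(\Omega;\mathcal L_2^0))}$ uniformly in $j$; taking the maximum over $j$ and renaming $\epsilon$ gives \cref{eq:Gh-Ghtau}.
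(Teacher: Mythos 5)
Your proposal is correct and follows essentially the same route as the paper: the same stochastic-integral representation of $(G_h-G_{h,\tau})(t_j)$, the same reduction via It\^o's isometry and \cref{eq:Rh-stab} to a deterministic operator-norm sum, and the same off-diagonal splitting into $e^{(t_j-t)A_h}-e^{(t_j-t_k)A_h}$ and $e^{(t_j-t_k)A_h}-(I-\tau A_h)^{-(j-k)}$ handled by \cref{eq:foo-1} and \cref{eq:etAh-err}. The only deviation is the diagonal term, which the paper dispatches more directly by bounding $e^{(t_{j+1}-t)A_h}$ and $(I-\tau A_h)^{-1}$ separately via \cref{eq:etAh} and \cref{eq:I-tauAh}, whereas your factorization through an intermediate space $\dot H_h^{\rho}$ is a slightly more elaborate but equally valid alternative.
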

\begin{proof}
  Let $ 0 \leqslant j < J $ be arbitrary but fixed. By
  \cref{eq:calGhtau,eq:S0h-int} we obtain $ \mathbb P $-a.s.
  \[
    (G_h - G_{h,\tau})(t_{j+1}) =
    \sum_{k=0}^j \int_{t_k}^{t_{k+1}} \Big(
      e^{(t_{j+1} - t)A_h} -
      (I-\tau A_h)^{-(j-k+1)}
    \Big) Q_h \mathcal R\sigma(t) \, \mathrm{d}W(t),
  \]
  so that
  \begin{small}
  \begin{align*} 
    & \ssnm{
      (G_h - G_{h,\tau})(t_{j+1})
    }_{\dot H_h^0}^2 \\
    ={} &
    \sum_{k=0}^j \ssnm{
      \int_{t_k}^{t_{k+1}}
      \big(
        e^{(t_{j+1}-t)A_h} - (I-\tau A_h)^{-(j-k+1)}
      \big) Q_h \mathcal R\sigma(t)
      \, \mathrm{d}W(t)
    }_{\dot H_h^0}^2 \\
    ={} &
    \sum_{k=0}^j \int_{t_k}^{t_{k+1}}
    \ssnm{
      \big(
        e^{(t_{j+1} - t)A_h} - (I-\tau A_h)^{-(j-k+1)}
      \big) Q_h \mathcal R \sigma(t)
    }_{\mathcal L_2(L_\lambda^2,\dot H_h^0)}^2 \, \mathrm{d}t \\
    \leqslant{} &
    \sum_{k=0}^j \int_{t_k}^{t_{k+1}}
    \nm{
      e^{(t_{j+1}-t)A_h} - (I-\tau A_h)^{-(j-k+1)}
    }_{
      \mathcal L(\dot H_h^{-1/2-\epsilon}, \dot H_h^0)
    }^2 \\
    & \quad \times \nm{Q_h \mathcal R}_{
      \mathcal L(L^2(\Gamma), \dot H_h^{-1/2-\epsilon})
    }^2 \ssnm{\sigma(t)}_{\mathcal L_2^0}^2 \, \mathrm{d}t.
  \end{align*}
  \end{small}
  By \cref{eq:Rh-stab} we then conclude that
  \begin{small}
  \begin{equation}
    \label{eq:G-calG-1}
    \begin{aligned}
      & \ssnm{
        (G_h - G_{h,\tau})(t_{j+1})
      }_{ \dot H_h^0 }^2 \lesssim
      \nm{\sigma}_{L^\infty(0,T;L^2(\Omega;\mathcal L_2^0))}^2 \times {} \\
      & \qquad\qquad \sum_{k=0}^j \int_{t_k}^{t_{k+1}} \nm{
        e^{(t_{j+1} - t)A_h} - (I-\tau A_h)^{-(j-k+1)}
      }_{\mathcal L(\dot H_h^{-1/2-\epsilon}, \dot H_h^0)}^2
      \, \mathrm{d}t.
    \end{aligned}
  \end{equation}
  \end{small}
  By \cref{eq:etAh,eq:I-tauAh}, we have
  \[ 
    \int_{t_j}^{t_{j+1}} \nm{
      e^{(t_{j+1} - t)A_h} -
      (I-\tau A_h)^{-1}
    }_{\mathcal L(\dot H_h^{-1/2-\epsilon}, \dot H_h^0)}^2
    \, \mathrm{d}t
    \lesssim \tau^{1/2-\epsilon},
  \]
  and by \cref{eq:foo-1,eq:etAh-err} we have
  \begin{align*} 
    & \sum_{k=0}^{j-1} \int_{t_k}^{t_{k+1}}
    \nm{
      e^{(t_{j+1} - t)A_h} - (I-\tau A_h)^{-(j-k+1)}
    }_{
      \mathcal L(\dot H_h^{-1/2-\epsilon}, \dot H_h^0)
    }^2 \, \mathrm{d}t \\
    \lesssim{} &
    \sum_{k=0}^{j-1} \int_{t_k}^{t_{k+1}}
    \nm{
      e^{(t_{j+1} - t)A_h} - e^{(t_{j+1}-t_k)A_h}
    }_{
      \mathcal L(\dot H_h^{-1/2-\epsilon}, \dot H_h^0)
    }^2 \, \mathrm{d}t + {} \\
    & \qquad \sum_{k=0}^{j-1} \tau
    \nm{
      e^{(t_{j+1} - t_k)A_h} - (I-\tau A_h)^{-(j-k+1)}
    }_{
      \mathcal L(\dot H_h^{-1/2-\epsilon}, \dot H_h^0)
    }^2 \\
    \lesssim{} &
    \tau^{1/2-\epsilon} \sum_{k=0}^{j-1}
    \int_{t_k}^{t_{k+1}} (t_{j+1} - t)^{-1} \, \mathrm{d}t +
    \tau^{1/2-\epsilon} \sum_{k=0}^{j-1} (j-k+1)^{-1} \\
    \lesssim{} &
    \tau^{1/2-\epsilon} \big( \ln(1/\tau) + \ln(j+1) \big) \\
    \lesssim{} & \tau^{1/2-\epsilon}.
  \end{align*}
  Combining the above two estimates yields
  \[
    \sum_{k=0}^j \int_{t_k}^{t_{k+1}} \nm{
      e^{(t_{j+1} - t)A_h} - (I-\tau A_h)^{-(j-k+1)}
    }_{
      \mathcal L(\dot H_h^{-1/2-\epsilon}, \dot H_h^0)
    }^2 \lesssim \tau^{1/2-\epsilon}.
  \]
  Hence, by \cref{eq:G-calG-1} we get
  \[
    \ssnm{
      (G_h - G_{h,\tau})(t_{j+1})
    }_{ \dot H_h^0 } \lesssim \tau^{1/4-\epsilon}
    \nm{\sigma}_{L^\infty(0,T;L^2(\Omega;\mathcal L_2^0))}.
  \]
  Hence, as $ 0 \leqslant j < J $ is arbitrary, \cref{eq:Gh-Ghtau} follows from
  the fact
  \[
    (G_h - G_{h,\tau})(0) = 0.
  \]
  This completes the proof.
\end{proof}

\begin{lemma}
  \label{lem:Gh-Ghtau-l2}
  Under the condition of \cref{lem:Gh-Ghtau}, we have
  \begin{equation} 
    \label{eq:Gh-Ghtau-l2}
    \ssnm{G_h - G_{h,\tau}}_{L^2(0,T;\dot H_h^0)}
    \lesssim \tau^{1/4-\epsilon}
    \nm{\sigma}_{L^\infty(0,T;L^2(\Omega;\mathcal L_2^0))}.
  \end{equation}
\end{lemma}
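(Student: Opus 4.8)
The plan is to deduce the $ L^2 $-in-time estimate \cref{eq:Gh-Ghtau-l2} from the nodal estimate of \cref{lem:Gh-Ghtau} together with a uniform temporal regularity bound for $ G_h $. Since $ G_{h,\tau} $ is constant on each $ [t_j,t_{j+1}) $ with value $ (G_{h,\tau})_j $, I would write, for $ t \in [t_j,t_{j+1}) $,
\[
  G_h(t) - G_{h,\tau}(t) = \big( G_h(t) - G_h(t_j) \big) + (G_h - G_{h,\tau})(t_j),
\]
so that
\[
  \ssnm{G_h - G_{h,\tau}}_{L^2(0,T;\dot H_h^0)}^2
  \lesssim
  \sum_{j=0}^{J-1} \int_{t_j}^{t_{j+1}}
  \ssnm{G_h(t) - G_h(t_j)}_{\dot H_h^0}^2 \, \mathrm{d}t
  + \tau \sum_{j=0}^{J-1} \ssnm{(G_h - G_{h,\tau})(t_j)}_{\dot H_h^0}^2.
\]
The last sum is at most $ \tau J \max_{0 \leqslant j \leqslant J} \ssnm{(G_h - G_{h,\tau})(t_j)}_{\dot H_h^0}^2 = T \max_{0 \leqslant j \leqslant J} \ssnm{(G_h - G_{h,\tau})(t_j)}_{\dot H_h^0}^2 \lesssim \tau^{1/2-\epsilon} \nm{\sigma}_{L^\infty(0,T;L^2(\Omega;\mathcal L_2^0))}^2 $ by \cref{lem:Gh-Ghtau}, so it remains to treat the first sum. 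For this it suffices to prove that
\[
  \sup_{t_j \leqslant t \leqslant t_{j+1}}
  \ssnm{G_h(t) - G_h(t_j)}_{\dot H_h^0}^2
  \lesssim \tau^{1/2-\epsilon}
  \nm{\sigma}_{L^\infty(0,T;L^2(\Omega;\mathcal L_2^0))}^2
  \quad \text{for every } 0 \leqslant j < J,
\]
since multiplying by the interval length $ \tau $ and summing over $ j $ again produces $ \tau^{1/2-\epsilon}\nm{\sigma}_{L^\infty(0,T;L^2(\Omega;\mathcal L_2^0))}^2 $ (up to the factor $ T $).

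To establish this regularity bound I would use the mild formula \cref{eq:S0h-int} to obtain, for $ t \in [t_j,t_{j+1}] $,
\[
  G_h(t) - G_h(t_j)
  = \int_{t_j}^{t} e^{(t-s)A_h} Q_h \mathcal R\sigma(s) \, \mathrm{d}W(s)
  + \int_0^{t_j} \big( e^{(t-s)A_h} - e^{(t_j-s)A_h} \big) Q_h \mathcal R\sigma(s) \, \mathrm{d}W(s),
\]
apply the It\^o isometry to pass to $ \mathcal L_2(L_\lambda^2, \dot H_h^0) $-norms of the two integrands, and then use \cref{eq:Rh-stab} together with $ \ssnm{\sigma(s)}_{\mathcal L_2^0} \leqslant \nm{\sigma}_{L^\infty(0,T;L^2(\Omega;\mathcal L_2^0))} =: \nm{\sigma}_{L^\infty} $ for a.e.\ $ s $. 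The first stochastic integral contributes $ \int_{t_j}^{t} \nm{e^{(t-s)A_h}}_{\mathcal L(\dot H_h^{-1/2-\epsilon},\dot H_h^0)}^2 \, \mathrm{d}s \cdot \nm{\sigma}_{L^\infty}^2 \lesssim (t-t_j)^{1/2-\epsilon}\nm{\sigma}_{L^\infty}^2 \leqslant \tau^{1/2-\epsilon}\nm{\sigma}_{L^\infty}^2 $ by \cref{eq:etAh}. For the second, I would write $ e^{(t-s)A_h} - e^{(t_j-s)A_h} = \big( e^{(t-t_j)A_h} - I \big) e^{(t_j-s)A_h} $ and, fixing an exponent $ \gamma \in (0, 1/2-\epsilon) $, estimate via \cref{eq:I-etAh,eq:etAh}
\[
  \nm{ e^{(t-s)A_h} - e^{(t_j-s)A_h} }_{\mathcal L(\dot H_h^{-1/2-\epsilon},\dot H_h^0)}
  \leqslant \nm{I - e^{(t-t_j)A_h}}_{\mathcal L(\dot H_h^\gamma, \dot H_h^0)}
  \, \nm{e^{(t_j-s)A_h}}_{\mathcal L(\dot H_h^{-1/2-\epsilon}, \dot H_h^\gamma)}
  \lesssim (t-t_j)^{\gamma/2} (t_j-s)^{-(1/2+\epsilon+\gamma)/2}.
\]
Since $ 1/2 + \epsilon + \gamma < 1 $, the resulting kernel is integrable near $ s=t_j $ and $ \int_0^{t_j} (t_j-s)^{-(1/2+\epsilon+\gamma)} \, \mathrm{d}s \leqslant T^{1/2-\epsilon-\gamma}/(1/2-\epsilon-\gamma) \lesssim 1 $, so the second stochastic integral contributes $ \lesssim (t-t_j)^\gamma \nm{\sigma}_{L^\infty}^2 \leqslant \tau^\gamma \nm{\sigma}_{L^\infty}^2 $; taking $ \gamma = 1/2-2\epsilon $ and then relabeling $ \epsilon $ gives the claimed $ \tau^{1/2-\epsilon} $ bound.

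Combining the two sums yields $ \ssnm{G_h - G_{h,\tau}}_{L^2(0,T;\dot H_h^0)}^2 \lesssim \tau^{1/2-\epsilon} \nm{\sigma}_{L^\infty(0,T;L^2(\Omega;\mathcal L_2^0))}^2 $, and taking square roots (absorbing constants into $ \epsilon $ once more) gives \cref{eq:Gh-Ghtau-l2}. The only delicate point is the second stochastic integral above: one must choose the interpolation exponent $ \gamma $ strictly below the integrability threshold $ 1/2-\epsilon $ of the singular kernel $ (t_j-s)^{-(1/2+\epsilon+\gamma)} $, yet close enough to $ 1/2-\epsilon $ that the temporal gain $ \tau^\gamma $ remains of order $ \tau^{1/2-\epsilon} $ — the same trade-off already present in the proof of \cref{lem:Gh-Ghtau}. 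Everything else is a routine application of the smoothing estimates \cref{eq:etAh,eq:I-etAh} and the It\^o isometry.
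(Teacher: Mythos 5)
Your proposal is correct and follows essentially the same route as the paper: reduce to the nodal estimate of \cref{lem:Gh-Ghtau} plus a bound on the increments $ G_h(t) - G_h(t_j) $, split the increment into the new-noise integral over $ [t_j,t] $ and the term $ \int_0^{t_j}\big(e^{(t-s)A_h}-e^{(t_j-s)A_h}\big)Q_h\mathcal R\sigma\,\mathrm{d}W $, which is exactly $ (e^{(t-t_j)A_h}-I)G_h(t_j) $, and control both via the It\^o isometry, \cref{eq:Rh-stab} and the smoothing estimates \cref{eq:etAh,eq:I-etAh}. The only cosmetic difference is that the paper first records the spatial regularity $ \ssnm{G_h(t_j)}_{\dot H_h^{1/2-\epsilon}}\lesssim\nm{\sigma}_{L^\infty(0,T;L^2(\Omega;\mathcal L_2^0))} $ and then applies $ I-e^{(t-t_j)A_h} $, whereas you fold the two steps into a single kernel estimate with the exponent trade-off $ \gamma<1/2-\epsilon $; the computations are equivalent.
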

\begin{proof}
  By \cref{eq:Gh-Ghtau}, we only need to prove
  \begin{equation} 
    \label{eq:Gh-Ghj}
    \sum_{j=0}^{J-1} \ssnm{
      G_h - G_h(t_j)
    }_{
      L^2(t_j,t_{j+1};\dot H_h^0)
    }^2 \lesssim \tau^{1/2-\epsilon}
    \nm{\sigma}_{L^\infty(0,T;L^2(\Omega;\mathcal L_2^0))}^2.
  \end{equation}
  To this end, we proceed as follows. For any $ t_j \leqslant t \leqslant t_{j+1}
  $ with $ 0 \leqslant j < J $, we have
  \[ 
    G_h(t) - e^{(t-t_j)A_h} G_h(t_j) =
    \int_{t_j}^t e^{(t-s)A_h} Q_h \mathcal R \sigma(s) \, \mathrm{d}W(s)
    \quad \mathbb P \text{-a.s.,}
  \]
  and so
  \begin{align*} 
    & \ssnm{
      G_h(t) - e^{(t-t_j)A_h} G_h(t_j)
    }_{\dot H_h^0}^2 \\
    ={} &
    \int_{t_j}^t \ssnm{
      e^{(t-s)A_h}Q_h \mathcal R\sigma(s)
    }_{
      \mathcal L_2(L_\lambda^2, \dot H_h^0)
    }^2 \, \mathrm{d}s \\
    \leqslant{} &
    \int_{t_j}^t \nm{e^{(t-s)A_h}}_{
      \mathcal L(\dot H_h^{-1/2-\epsilon}, \dot H_h^0)
    }^2 \nm{Q_h \mathcal R}_{
      \mathcal L(L^2(\Gamma), \dot H_h^{-1/2-\epsilon})
    }^2 \ssnm{\sigma(s)}_{\mathcal L_2^0}^2 \, \mathrm{d}s \\
    \lesssim{} &
    \int_{t_j}^t (t-s)^{-1/2-\epsilon} \, \mathrm{d}s
    \nm{\sigma}_{L^\infty(0,T;L^2(\Omega;\mathcal L_2^0))}^2
    \quad\text{(by \cref{eq:Rh-stab,eq:etAh})} \\
    \lesssim{} &
    (t-t_j)^{1/2-\epsilon} \nm{\sigma}_{
      L^\infty(0,T;L^2(\Omega;\mathcal L_2^0))
    }^2.
  \end{align*}
  It follows that
  \begin{equation}
    \label{eq:Gh-Ghj-3}
    \int_{t_j}^{t_{j+1}}
    \ssnm{
      G_h(t) - e^{(t-t_j)A_h} G_h(t_j)
    }_{\dot H_h^0}^2 \, \mathrm{d}t
    \lesssim \tau^{3/2-\epsilon} \nm{\sigma}_{
      L^\infty(0,T;L^2(\Omega;\mathcal L_2^0))
    }^2.
  \end{equation}
  Following the proof of \cref{eq:G-regu}, we obtain
  \[
    \ssnm{G_h(t_j)}_{\dot H_h^{1/2-\epsilon}}
    \lesssim \nm{\sigma}_{L^\infty(0,T;L^2(\Omega;\mathcal L_2^0))},
  \]
  so that
  \begin{align*} 
    & \int_{t_j}^{t_{j+1}} \ssnm{
      (I-e^{(t-t_j)A_h}) G_h(t_j)
    }_{\dot H_h^0}^2 \, \mathrm{d}t \\
    \lesssim{} &
    \int_{t_j}^{t_{j+1}} (t-t_j)^{1/2-\epsilon}
    \ssnm{G_h(t_j)}_{\dot H_h^{1/2-\epsilon}}^2 \, \mathrm{d}t
    \quad\text{(by \cref{eq:I-etAh})} \\
    \lesssim{} &
    \tau^{3/2-\epsilon} \nm{G_h(t_j)}_{
      \dot H_h^{1/2-\epsilon}
    }^2 \\
    \lesssim{} &
    \tau^{3/2-\epsilon} \nm{ \sigma }_{
      L^\infty(0,T;L^2(\Omega;\mathcal L_2^0))
    }^2.
  \end{align*}
  Combining the above estimate with \cref{eq:Gh-Ghj-3} yields
  \[ 
    \ssnm{G_h - G_h(t_j)}_{
      L^2(t_j,t_{j+1};\dot H_h^0)
    }^2 \lesssim \tau^{3/2-\epsilon}
    \nm{\sigma}_{L^\infty(0,T;L^2(\Omega;\mathcal L_2^0))}^2,
  \]
  and hence
  \begin{align*} 
    \sum_{j=0}^{J-1} \ssnm{
      G_h - G_h(t_j)
    }_{L^2(t_j,t_{j+1};\dot H_h^0)}^2
    &\lesssim \sum_{j=0}^{J-1} \tau^{3/2-\epsilon} \nm{\sigma}_{
      L^\infty(0,T;L^2(\Omega;\mathcal L_2^0))
    }^2 \\
    & \lesssim \tau^{1/2-\epsilon} \nm{\sigma}_{
      L^\infty(0,T;L^2(\Omega;\mathcal L_2^0))
    }^2.
  \end{align*}
  This proves \cref{eq:Gh-Ghj} and thus concludes the proof.
\end{proof}

Finally, combining \cref{lem:G-Gh,lem:Gh-Ghtau-l2}, we readily conclude the
following error estimate.
\begin{lemma} 
  \label{lem:G-Ghtau}
  If
  \[
    \sigma \in L_\mathbb F^2(\Omega;L^2(0,T;\mathcal L_2^0))
    \cap L^\infty(0,T;L^2(\Omega;\mathcal L_2^0)),
  \]
  then
  \begin{equation}
    \label{eq:G-Ghtau}
    \ssnm{G - G_{h,\tau}}_{
      L^2(0,T;\dot H^0)
    } \lesssim (\tau^{1/4-\epsilon} + h^{1/2-\epsilon})
    \nm{\sigma}_{L^\infty(0,T;L^2(\Omega;\mathcal L_2^0))}.
  \end{equation}
\end{lemma}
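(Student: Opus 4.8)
The plan is to obtain \cref{eq:G-Ghtau} from a single triangle inequality through the spatially semidiscrete process $ G_h $ of \cref{eq:Gh-def}, for which the two pieces of the error have already been controlled in \cref{lem:G-Gh} and \cref{lem:Gh-Ghtau-l2}. Concretely, I would start from
\[
  \ssnm{G - G_{h,\tau}}_{L^2(0,T;\dot H^0)}
  \leqslant \ssnm{G - G_h}_{L^2(0,T;\dot H^0)}
  + \ssnm{G_h - G_{h,\tau}}_{L^2(0,T;\dot H^0)},
\]
and then estimate the two summands separately.

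For the first summand, \cref{lem:G-Gh} gives $ \ssnm{G-G_h}_{L^2(0,T;\dot H^0)} \lesssim h^{1/2-\epsilon}\ssnm{\sigma}_{L^2(0,T;\mathcal L_2^0)} $. Since the time interval is finite, Fubini's theorem yields
\[
  \ssnm{\sigma}_{L^2(0,T;\mathcal L_2^0)}^2
  = \int_0^T \ssnm{\sigma(t)}_{\mathcal L_2^0}^2 \, \mathrm{d}t
  \leqslant T\,\nm{\sigma}_{L^\infty(0,T;L^2(\Omega;\mathcal L_2^0))}^2,
\]
so, after absorbing $ T $ into the implied constant, the first summand is $ \lesssim h^{1/2-\epsilon}\nm{\sigma}_{L^\infty(0,T;L^2(\Omega;\mathcal L_2^0))} $. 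For the second summand, I would observe that $ G_h $ and $ G_{h,\tau} $ are both $ \mathcal V_h $-valued and that the norms of $ \dot H^0 $ and $ \dot H_h^0 $ agree on $ \mathcal V_h $, so \cref{lem:Gh-Ghtau-l2} applies verbatim and gives $ \ssnm{G_h-G_{h,\tau}}_{L^2(0,T;\dot H^0)} \lesssim \tau^{1/4-\epsilon}\nm{\sigma}_{L^\infty(0,T;L^2(\Omega;\mathcal L_2^0))} $. Adding the two bounds gives \cref{eq:G-Ghtau}.

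There is no real obstacle here: the argument is a triangle inequality together with the already-proved \cref{lem:G-Gh} and \cref{lem:Gh-Ghtau-l2}; the only point requiring a line of care is the elementary passage from the $ L^2 $-in-time norm of $ \sigma $ appearing in \cref{lem:G-Gh} to the $ L^\infty $-in-time norm in the statement, which is immediate on the bounded interval $ (0,T) $, with the length $ T $ folded into the constant $ C $ of the conclusion.
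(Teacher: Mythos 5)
Your proposal is correct and is exactly the argument the paper intends: \cref{lem:G-Ghtau} is stated as an immediate consequence of combining \cref{lem:G-Gh,lem:Gh-Ghtau-l2} via the triangle inequality through $ G_h $. Your two additional remarks --- bounding $ \ssnm{\sigma}_{L^2(0,T;\mathcal L_2^0)} $ by $ T^{1/2}\nm{\sigma}_{L^\infty(0,T;L^2(\Omega;\mathcal L_2^0))} $ and noting that $ \nm{\cdot}_{\dot H_h^0} $ coincides with $ \nm{\cdot}_{\dot H^0} $ on $ \mathcal V_h $ --- are precisely the details the paper leaves implicit.
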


\subsection{Convergence of \texorpdfstring{$ S_0^{h,\tau} $ and $ S_1^{h,\tau} $}{}}
\begin{lemma} 
  For any $ g_h \in L^2(0,T;\mathcal V_h) $, define $ \{Y_j\}_{j=0} ^J \subset
  \mathcal V_h $ by
  \begin{equation} 
    \begin{cases}
      Y_0 = 0, \\
      Y_{j+1} - Y_j = \tau A_h Y_{j+1} + \int_{t_j}^{t_{j+1}} g_h(t) \,
      \mathrm{d}t, \quad 0 \leqslant j < J.
    \end{cases}
  \end{equation}
  Then, for any $ 0 \leqslant \gamma \leqslant 1 $,
  \begin{equation} 
    \label{eq:Yj-jmp}
    \Big(
      \sum_{j=0}^{J-1} \nm{Y_{j+1} - Y_j}_{\dot H_h^0}^2
    \Big)^{1/2} \lesssim \tau^{(1-\gamma)/2}
    \nm{g_h}_{L^2(0,T;\dot H_h^{-\gamma})}.
  \end{equation}
\end{lemma}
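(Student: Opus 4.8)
The plan is to establish \cref{eq:Yj-jmp} at the two endpoints $\gamma=0$ and $\gamma=1$ by energy arguments, and then to obtain the whole range $0\le\gamma\le1$ by interpolation in $\gamma$. Throughout I would write $\delta_j:=Y_{j+1}-Y_j$ and $G_j:=\int_{t_j}^{t_{j+1}}g_h(t)\,\mathrm{d}t\in\mathcal V_h$, so that the recursion reads $\delta_j=\tau A_hY_{j+1}+G_j$, and recall that the symmetric bilinear form $\dual{-A_hv,w}_{\mathcal O}=\int_{\mathcal O}\nabla v\cdot\nabla w+vw$ satisfies $\dual{-A_hv,v}_{\mathcal O}=\nm{v}_{\dot H_h^1}^2$, together with the algebraic identity $2\dual{-A_hx,x-y}_{\mathcal O}=\nm{x}_{\dot H_h^1}^2-\nm{y}_{\dot H_h^1}^2+\nm{x-y}_{\dot H_h^1}^2$ and its $\dot H_h^0$-analogue.

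For $\gamma=0$ I would test the recursion against $\delta_j$ in $\dot H_h^0$: using $\tau\dual{A_hY_{j+1},\delta_j}_{\mathcal O}=-\tfrac\tau2\big(\nm{Y_{j+1}}_{\dot H_h^1}^2-\nm{Y_j}_{\dot H_h^1}^2+\nm{\delta_j}_{\dot H_h^1}^2\big)\le-\tfrac\tau2\big(\nm{Y_{j+1}}_{\dot H_h^1}^2-\nm{Y_j}_{\dot H_h^1}^2\big)$ together with $|\dual{G_j,\delta_j}_{\mathcal O}|\le\tfrac12\nm{G_j}_{\dot H_h^0}^2+\tfrac12\nm{\delta_j}_{\dot H_h^0}^2$ gives $\tfrac12\nm{\delta_j}_{\dot H_h^0}^2+\tfrac\tau2\big(\nm{Y_{j+1}}_{\dot H_h^1}^2-\nm{Y_j}_{\dot H_h^1}^2\big)\le\tfrac12\nm{G_j}_{\dot H_h^0}^2$. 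Summing over $0\le j<J$, using $Y_0=0$, discarding $\tfrac\tau2\nm{Y_J}_{\dot H_h^1}^2\ge0$, and bounding $\nm{G_j}_{\dot H_h^0}^2\le\tau\int_{t_j}^{t_{j+1}}\nm{g_h(t)}_{\dot H_h^0}^2\,\mathrm{d}t$ by the Cauchy--Schwarz inequality, one arrives at $\sum_{j=0}^{J-1}\nm{\delta_j}_{\dot H_h^0}^2\le\tau\nm{g_h}_{L^2(0,T;\dot H_h^0)}^2$, i.e.\ \cref{eq:Yj-jmp} for $\gamma=0$.

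For $\gamma=1$ I would instead test the recursion against $Y_{j+1}$. Then $\dual{\delta_j,Y_{j+1}}_{\mathcal O}=\tfrac12\big(\nm{Y_{j+1}}_{\dot H_h^0}^2-\nm{Y_j}_{\dot H_h^0}^2+\nm{\delta_j}_{\dot H_h^0}^2\big)$, $\tau\dual{A_hY_{j+1},Y_{j+1}}_{\mathcal O}=-\tau\nm{Y_{j+1}}_{\dot H_h^1}^2$, and by the $\dot H_h^{-1}$--$\dot H_h^1$ duality together with Young's inequality $|\dual{G_j,Y_{j+1}}_{\mathcal O}|\le\tfrac1{2\tau}\nm{G_j}_{\dot H_h^{-1}}^2+\tfrac\tau2\nm{Y_{j+1}}_{\dot H_h^1}^2$; absorbing the gradient term and using $\tfrac1\tau\nm{G_j}_{\dot H_h^{-1}}^2\le\int_{t_j}^{t_{j+1}}\nm{g_h(t)}_{\dot H_h^{-1}}^2\,\mathrm{d}t$, then summing over $j$ (the $\nm{Y_j}_{\dot H_h^0}^2$ terms telescope and $Y_0=0$) yields $\sum_{j=0}^{J-1}\nm{\delta_j}_{\dot H_h^0}^2\le\nm{g_h}_{L^2(0,T;\dot H_h^{-1})}^2$, i.e.\ \cref{eq:Yj-jmp} for $\gamma=1$.

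Finally I would interpolate. The solution map $T\colon g_h\mapsto(\delta_0,\dots,\delta_{J-1})$ is linear, and by the two cases above it maps $L^2(0,T;\dot H_h^0)$ into the Hilbert space of finite sequences $(v_j)_{j=0}^{J-1}$ in $\dot H_h^0$ normed by $\big(\sum_j\nm{v_j}_{\dot H_h^0}^2\big)^{1/2}$ with operator norm $\lesssim\tau^{1/2}$, and $L^2(0,T;\dot H_h^{-1})$ into the same space with norm $\lesssim1$. Since $[\dot H_h^0,\dot H_h^{-1}]_\gamma=\dot H_h^{-\gamma}$ by the spectral calculus for $-A_h$, and complex interpolation commutes with $L^2(0,T;\cdot)$, one has $[L^2(0,T;\dot H_h^0),L^2(0,T;\dot H_h^{-1})]_\gamma=L^2(0,T;\dot H_h^{-\gamma})$ (cf.\ \cite[Theorems 2.6 and 4.36]{Lunardi2018}); interpolating $T$ then gives the operator norm bound $\lesssim(\tau^{1/2})^{1-\gamma}=\tau^{(1-\gamma)/2}$, which is precisely \cref{eq:Yj-jmp}. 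I expect the only genuinely delicate step to be the $\gamma=1$ estimate, where the Young weights must be chosen so that the full $\tau\nm{Y_{j+1}}_{\dot H_h^1}^2$ term is absorbed and the data is measured in $\dot H_h^{-1}$ rather than $\dot H_h^0$; everything else is routine.
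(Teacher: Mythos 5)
Your proof is correct, and it follows the same overall strategy as the paper (two endpoint estimates plus interpolation in $\gamma$), but with a genuinely different choice of the second endpoint and with the endpoint estimates worked out from scratch rather than cited. The paper takes the $\gamma=0$ jump estimate $\big(\sum_j\nm{Y_{j+1}-Y_j}_{\dot H_h^0}^2\big)^{1/2}\lesssim\tau^{1/2}\nm{g_h}_{L^2(0,T;\dot H_h^0)}$ and the a priori bound $\big(\sum_j\tau\nm{Y_j}_{\dot H_h^0}^2\big)^{1/2}\lesssim\nm{g_h}_{L^2(0,T;\dot H_h^{-2})}$ from the energy arguments of Meidner--Vexler, converts the latter into the crude jump bound $\big(\sum_j\nm{Y_{j+1}-Y_j}_{\dot H_h^0}^2\big)^{1/2}\lesssim\tau^{-1/2}\nm{g_h}_{L^2(0,T;\dot H_h^{-2})}$ via the triangle inequality, and interpolates between $\gamma=0$ and $\gamma=2$; this reuses known deterministic estimates and in fact yields \cref{eq:Yj-jmp} for the full range $0\leqslant\gamma\leqslant2$. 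You instead prove the $\gamma=1$ endpoint directly by testing the scheme against $Y_{j+1}$, which is the natural endpoint for the stated range $0\leqslant\gamma\leqslant1$, and your two energy identities (testing against $\delta_j$ and against $Y_{j+1}$, with the correct Young weights to absorb the full $\tau\nm{Y_{j+1}}_{\dot H_h^1}^2$ term) are carried out correctly, as is the final interpolation of the solution map into the fixed target $\ell^2(\dot H_h^0)$ with the identification $[\dot H_h^0,\dot H_h^{-1}]_\gamma=\dot H_h^{-\gamma}$ by the spectral calculus for $-A_h$ (with constants independent of $h$ and $\tau$). The net effect is a self-contained proof that avoids the external citation, at the cost of covering only $\gamma\leqslant1$ rather than $\gamma\leqslant2$ --- which is all the lemma claims and all the paper uses.
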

\begin{proof}
  Following the proof of \cite[Theorem 4.6]{Vexler2008I}, we obtain
  \begin{align} 
    \Big(
      \sum_{j=0}^{J-1} \nm{
        Y_{j+1} - Y_j
      }_{\dot H_h^0}^2
    \Big)^{1/2}
    \lesssim \tau^{1/2} \nm{g_h}_{L^2(0,T;\dot H_h^0)},
    \label{eq:Yj-jmp1} \\
    \big(
      \sum_{j=0}^{J-1} \tau \nm{ Y_j }_{\dot H_h^0}^2
    \big)^{1/2}
    \lesssim \nm{g_h}_{L^2(0,T;\dot H_h^{-2})}.
    \label{eq:YjH2}
  \end{align}
  From \cref{eq:YjH2} it follows that
  \begin{equation} 
    \label{eq:Yj-jmp2}
    \Big(
      \sum_{j=0}^{J-1} \nm{
        Y_{j+1} - Y_j
      }_{\dot H_h^0}^2
    \Big)^{1/2}
    \lesssim \tau^{-1/2} \nm{g_h}_{L^2(0,T;\dot H_h^{-2})}.
  \end{equation}
  In view of \cref{eq:Yj-jmp1,eq:Yj-jmp2}, by interpolation we obtain
  \cref{eq:Yj-jmp}. This completes the proof.
\end{proof}

\begin{lemma}
  \label{lem:S0-conv}
  For any $ g \in L_\mathbb F^2(\Omega;L^2(0,T;L^2(\Gamma))) $, we have
  \begin{equation} 
    \label{eq:S0-conv}
    \ssnm{(S_0 - S_0^{h,\tau})\mathcal Rg}_{L^2(0,T;\dot H^0)}
    \lesssim (\tau^{3/4-\epsilon} + h^{3/2-\epsilon})
    \ssnm{g}_{L^2(0,T;L^2(\Gamma))}.
  \end{equation}
\end{lemma}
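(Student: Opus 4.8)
The plan is to factor the error through the spatially semidiscrete solution operator $S_0^h$, defined for $g \in L^2(0,T;\dot H^{-1})$ by $(S_0^h g)(t) := \int_0^t e^{(t-s)A_h} Q_h g(s) \, \mathrm{d}s$; this $S_0^h$ is the finite element semidiscretization of $S_0$ in \cref{eq:S0-def}, and $S_0^{h,\tau}$ from \cref{eq:calS0} is its backward-Euler full discretization. One then splits
\[
  (S_0 - S_0^{h,\tau})\mathcal R g = (S_0 - S_0^h)\mathcal R g + (S_0^h - S_0^{h,\tau})\mathcal R g,
\]
so it suffices to bound the spatial error by $h^{3/2-\epsilon}$ and the temporal error by $\tau^{3/4-\epsilon}$, each times $\ssnm{g}_{L^2(0,T;L^2(\Gamma))}$. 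Since neither operator involves a stochastic integral, all of these estimates can be established $\mathbb P$-a.s.\ as deterministic Bochner-space bounds for a fixed realization and then squared and integrated over $\Omega$; in particular the $\mathbb F$-measurability of $g$ plays no role in this lemma.

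For the spatial error I would derive two endpoint estimates and interpolate. A routine energy argument analogous to the one behind \cref{eq:S0R-C} (and the convexity of $\mathcal O$) gives $S_0 \in \mathcal L(L^2(0,T;\dot H^0), L^2(0,T;\dot H^2))$, and combining this with the classical $L^2(\mathcal O)$ finite element error estimate for the Neumann heat semigroup (cf.~\cite[Chapters 1--3]{Thomee2006}, \cite[Chapters 4--5]{Brenner2008}) yields $\nm{(S_0 - S_0^h)f}_{L^2(0,T;\dot H^0)} \lesssim h^2 \nm{f}_{L^2(0,T;\dot H^0)}$; an Aubin--Nitsche duality argument together with \cref{eq:S0R-C} yields $\nm{(S_0 - S_0^h)f}_{L^2(0,T;\dot H^0)} \lesssim h \nm{f}_{L^2(0,T;\dot H^{-1})}$. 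Interpolating with $\theta = 1/2-\epsilon$, so that $[\dot H^{-1}, \dot H^0]_\theta = \dot H^{-1/2-\epsilon}$ (cf.~\cite[Theorems 2.6 and 4.36]{Lunardi2018}), gives $\nm{(S_0 - S_0^h)f}_{L^2(0,T;\dot H^0)} \lesssim h^{3/2-\epsilon}\nm{f}_{L^2(0,T;\dot H^{-1/2-\epsilon})}$, and taking $f = \mathcal R g$ and using $\mathcal R \in \mathcal L(L^2(\Gamma),\dot H^{-1/2-\epsilon})$ (see \cref{eq:calR-def}) finishes the spatial bound.

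For the temporal error I would fix a realization, set $f_h := Q_h \mathcal R g$ (so that $\nm{f_h}_{L^2(0,T;\dot H_h^{-1/2-\epsilon})} \lesssim \nm{g}_{L^2(0,T;L^2(\Gamma))}$ by \cref{eq:Rh-stab}), and follow the arguments of \cref{lem:Gh-Ghtau,lem:Gh-Ghtau-l2} almost verbatim, with the It\^o isometry replaced by ordinary Bochner norms. Iterating \cref{eq:calS0} gives, for $1 \leqslant j \leqslant J$,
\[
  (S_0^h f_h - S_0^{h,\tau}f_h)(t_j) = \sum_{k=0}^{j-1} \int_{t_k}^{t_{k+1}} \big( e^{(t_j - s)A_h} - (I-\tau A_h)^{-(j-k)} \big) f_h(s) \, \mathrm{d}s,
\]
and on each subinterval one compares the piecewise-constant reconstruction of $S_0^{h,\tau}f_h$ with $S_0^h f_h$; using the smoothing bounds \cref{eq:etAh,eq:I-etAh,eq:I-tauAh,eq:foo-1,eq:etAh-err} this produces the two estimates
\[
  \nm{(S_0^h - S_0^{h,\tau})f_h}_{L^2(0,T;\dot H_h^0)} \lesssim \tau \, \nm{f_h}_{L^2(0,T;\dot H_h^0)}
\]
and
\[
  \nm{(S_0^h - S_0^{h,\tau})f_h}_{L^2(0,T;\dot H_h^0)} \lesssim \tau^{1/2-\epsilon} \, \nm{f_h}_{L^2(0,T;\dot H_h^{-1})},
\]
where the $\epsilon$ in the second absorbs a harmless $\ln(1/\tau)$ coming from the singular kernel near $s \to t_j$. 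Interpolating once more at $\theta = 1/2-\epsilon$ gives $\nm{(S_0^h - S_0^{h,\tau})f_h}_{L^2(0,T;\dot H_h^0)} \lesssim \tau^{3/4-\epsilon}\nm{f_h}_{L^2(0,T;\dot H_h^{-1/2-\epsilon})}$, hence the temporal bound; alternatively, one could cite the backward-Euler nonsmooth-data theory of \cite[Chapter 7]{Thomee2006} and \cite[Theorem 4.6]{Vexler2008I} directly. Adding the spatial and temporal bounds and integrating over $\Omega$ completes the argument.

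The step I expect to be the main obstacle is the $\dot H_h^{-1}$-to-$\dot H_h^0$ temporal estimate: the kernel $e^{(t_j - s)A_h} - (I-\tau A_h)^{-(j-k)}$ is only of size $\tau^{1/2}(t_j - s)^{-1/2}$ near the diagonal, so its $L^2$-in-time summation is merely logarithmically bounded, and one must in addition control the intra-step variation of the continuous-in-time semidiscrete solution $S_0^h f_h$ --- precisely the two-step device used in \cref{lem:Gh-Ghtau-l2}. This is structurally the same as the analysis of $G_{h,\tau}$, but lighter since no stochastic integral is involved.
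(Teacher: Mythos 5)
Your spatial-error argument (two endpoint estimates, $h^{2}$ for $\dot H^{0}$ data and $h$ for $\dot H^{-1}$ data by duality, then interpolation to $\dot H^{-1/2-\epsilon}$ combined with $\mathcal R\in\mathcal L(L^{2}(\Gamma),\dot H^{-1/2-\epsilon})$) is sound and mirrors the structure of \cref{lem:G-Gh}. The gap is in the temporal part. The two endpoint estimates you claim, $\tau\,\nm{f_h}_{L^2(0,T;\dot H_h^0)}$ and $\tau^{1/2-\epsilon}\nm{f_h}_{L^2(0,T;\dot H_h^{-1})}$, are true statements, but they do not follow from the route you describe, namely the kernel splitting of \cref{lem:Gh-Ghtau,lem:Gh-Ghtau-l2} ``with the It\^o isometry replaced by ordinary Bochner norms''. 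The It\^o isometry is an \emph{equality} that localizes the data norm to each subinterval $[t_k,t_{k+1}]$; for a deterministic Bochner convolution the replacement is Cauchy--Schwarz (or Young's convolution inequality), which charges the global norm $\nm{f_h}_{L^2(0,t_j)}$ at every output time $t_j$, and integrating over $t_j\in[0,T]$ then costs an extra factor of order $T^{1/2}$ instead of telescoping. Concretely, with the kernel bounds \cref{eq:foo-1,eq:etAh-err} one only gets $\int_0^{t_j}\nm{K(t_j,s)}^2_{\mathcal L(\dot H_h^0,\dot H_h^0)}\,\mathrm{d}s\lesssim\tau\ln(1/\tau)$, hence $\nm{(S_0^h-S_0^{h,\tau})f_h}_{L^2(0,T;\dot H_h^0)}\lesssim\tau^{1/2-\epsilon}\nm{f_h}_{L^2(0,T;\dot H_h^0)}$ --- half the claimed order --- and at the $\dot H_h^{-1}$ endpoint the same computation yields only $O(1)$ up to logarithms. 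The same loss of $\tau^{1/2}$ occurs in your intra-step variation term if, as in \cref{lem:Gh-Ghtau-l2}, you control $(I-e^{(t-t_j)A_h})(S_0^hf_h)(t_j)$ through the $L^\infty$-in-time analogue of \cref{eq:G-regu}: summing $\tau^{3/2-\epsilon}$ over $J\sim\tau^{-1}$ steps gives the rate $\tau^{1/4-\epsilon}$, which is exactly what suffices for $G_{h,\tau}$ but not the $\tau^{3/4-\epsilon}$ needed here. To reach $\tau^{3/4-\epsilon}$ one must exploit maximal-regularity-type information (e.g.\ $S_0^hf_h\in L^2(0,T;\dot H_h^{3/2-\epsilon})$, or an $\ell^2$-summed jump bound) rather than pointwise-in-time bounds; the difficulty you flag at the end is therefore not a harmless logarithm but a loss of half an order in $\tau$.

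Your fallback --- citing the nonsmooth-data backward Euler theory directly --- is the correct repair, and it is essentially what the paper does. The paper's proof cites \cite[Chapter 12]{Thomee2006} and \cite[Theorems 5.1 and 5.5]{Vexler2008I} for
\[
  \Big(\sum_{j=0}^{J-1}\ssnm{S_0\mathcal Rg-(S_0^{h,\tau}\mathcal Rg)_{j+1}}^2_{L^2(t_j,t_{j+1};\dot H^0)}\Big)^{1/2}
  \lesssim(\tau^{3/4-\epsilon}+h^{3/2-\epsilon})\ssnm{g}_{L^2(0,T;L^2(\Gamma))},
\]
and the only new ingredient is the passage from the right-endpoint values to the piecewise-constant (left-endpoint) reconstruction via the jump estimate \cref{eq:Yj-jmp} with $\gamma=1/2+\epsilon$ together with \cref{eq:Rh-stab}, which contributes $\tau^{1/2}\cdot\tau^{1/4-\epsilon}=\tau^{3/4-\epsilon}$. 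Note that \cref{eq:Yj-jmp} is precisely an $\ell^2$-summed (discrete maximal regularity) bound on the increments, i.e.\ the kind of estimate your pointwise kernel technique cannot reproduce at the required order.
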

\begin{proof}
  A routine energy argument (cf.~\cite[Chapter 12]{Thomee2006} and
  \cite[Theorems 5.1 and 5.5]{Vexler2008I}) gives that
  \begin{small}
  \[
    \Big(
      \sum_{j=0}^{J-1} \ssnm{
        S_0 \mathcal Rg - (S_0^{h,\tau}\mathcal Rg)_{j+1}
      }_{
        L^2(t_j,t_{j+1};\dot H^0)
      }^2
    \Big)^{1/2} \lesssim (\tau^{3/4-\epsilon} + h^{3/2-\epsilon})
    \ssnm{g}_{L^2(0,T;L^2(\Gamma))}.
  \]
  \end{small}
  We also have
  \begin{align*} 
    & \Big(
      \sum_{j=0}^{J-1} \ssnm{
        (S_0^{h,\tau}\mathcal Rg)_{j+1} -
        (S_0^{h,\tau}\mathcal Rg)_j
      }_{\dot H_h^0}^2
    \Big)^{1/2} \\
    \lesssim{} &
    \tau^{1/4-\epsilon}
    \ssnm{Q_h \mathcal Rg}_{L^2(0,T;\dot H_h^{-1/2-\epsilon})}
    \quad\text{(by \cref{eq:Yj-jmp})} \\
    \lesssim{} &
    \tau^{1/4-\epsilon} \ssnm{g}_{
      L^2(0,T;L^2(\Gamma))
    } \quad\text{(by \cref{eq:Rh-stab}).}
  \end{align*}
  Hence,
  \begin{small}
  \begin{align*}
    & \ssnm{(S_0 - S_0^{h,\tau})\mathcal Rg}_{L^2(0,T;\dot H^0)} \\
    ={} &
    \Big(
      \sum_{j=0}^{J-1} \ssnm{
        S_0\mathcal Rg - (S_0^{h,\tau}\mathcal Rg)_j
      }_{
        L^2(t_j,t_{j+1};\dot H^0)
      }^2
    \Big)^{1/2} \\
    \leqslant{} &
    \Big(
      \sum_{j=0}^{J-1} \ssnm{
        S_0\mathcal Rg - (S_0^{h,\tau}\mathcal Rg)_{j+1}
      }_{
        L^2(t_j,t_{j+1};\dot H^0)
      }^2
    \Big)^{1/2} + \Big(
      \sum_{j=0}^{J-1} \tau \ssnm{
        (S_0^{h,\tau}\mathcal Rg)_{j+1} -
        (S_0^{h,\tau}\mathcal Rg)_j
      }_{\dot H_h^0}^2
    \Big)^{1/2} \\
    \lesssim{} &
    (\tau^{3/4-\epsilon} + h^{3/2-\epsilon})
    \ssnm{g}_{L^2(0,T;L^2(\Gamma))}.
  \end{align*}
  \end{small}
  This proves \cref{eq:S0-conv} and thus concludes the proof.
\end{proof}

\begin{lemma}
  For any $ g \in L_\mathbb F^2(\Omega;L^2(0,T;\dot H^0)) $, we have
  \begin{small}
  \begin{equation}
    \label{eq:S1-calS1}
    \Big(
      \sum_{j=0}^{J-1} \ssnm{S_1g - (S_1^{h,\tau}g)_{j+1}}_{
        L^2(t_j,t_{j+1};L^2(\Gamma))
      }^2
    \Big)^{1/2} \lesssim (\tau^{3/4-\epsilon} + h^{3/2-\epsilon})
    \ssnm{g}_{L^2(0,T;\dot H^0)}.
  \end{equation}
  \end{small}
\end{lemma}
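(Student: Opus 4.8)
The plan is to deduce \cref{eq:S1-calS1} from the already established estimate \cref{eq:S0-conv} for $S_0^{h,\tau}$ by a duality argument, exploiting that — through the operator $\mathcal R$ — the pair $(S_1,S_1^{h,\tau})$ is the adjoint of $(S_0,S_0^{h,\tau})$; this is precisely the content of the identities \cref{eq:S0-S1,eq:S1htau-S2htau} together with the definition \cref{eq:calR-def} of $\mathcal R$. Fix a test function $f \in L^2(\Omega;L^2(0,T;L^2(\Gamma)))$ with $\ssnm{f}_{L^2(0,T;L^2(\Gamma))} \leqslant 1$. Since the left-hand side of \cref{eq:S1-calS1} equals the supremum over all such $f$ of
\[
  \sum_{j=0}^{J-1} \int_{t_j}^{t_{j+1}}
  \dual{ S_1g(t) - (S_1^{h,\tau}g)_{j+1}, \, f(t) } \, \mathrm{d}t ,
\]
it suffices to bound this quantity by $(\tau^{3/4-\epsilon} + h^{3/2-\epsilon})\,\ssnm{g}_{L^2(0,T;\dot H^0)}$ uniformly in $f$. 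I would emphasise at this point that $S_1g$ is in general not $\mathbb F$-adapted, so $f$ genuinely has to range over the whole of $L^2(\Omega;L^2(0,T;L^2(\Gamma)))$, not just its $\mathbb F$-adapted subspace.

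I would then rewrite the two parts of the displayed sum. For the continuous part, $g \in L^2(0,T;\dot H^0)$ gives $S_1g \in L^2(0,T;\dot H^1)$ by \cref{eq:S1-C}, hence $S_1g(t) \in \dot H^{1/2+\epsilon}$, so the continuous embedding $\dot H^{1/2+\epsilon} \hookrightarrow L^2(\Gamma)$ and \cref{eq:calR-def} (read pathwise in $\omega$, with test function $S_1g(t)$) give $\dual{S_1g(t),f(t)} = \dual{\mathcal Rf(t),S_1g(t)}_{\dot H^1}$; integrating in $t$ and $\omega$ and invoking \cref{eq:S0-S1} with $f$ replaced by $\mathcal Rf$ — which is legitimate since $\mathcal Rf \in L^2(0,T;\dot H^{-1})$ for $0 < \epsilon \leqslant 1/2$ — yields $\int_0^T \dual{S_1g,f}\,\mathrm{d}t = \int_0^T [S_0\mathcal Rf,g]\,\mathrm{d}t$. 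For the discrete part, \cref{eq:S1htau-S2htau} gives at once $\sum_{j=0}^{J-1}\int_{t_j}^{t_{j+1}} \dual{(S_1^{h,\tau}g)_{j+1},f(t)}\,\mathrm{d}t = \int_0^T [S_0^{h,\tau}\mathcal Rf,g]\,\mathrm{d}t$. Subtracting, the displayed sum equals $\int_0^T [(S_0-S_0^{h,\tau})\mathcal Rf,g]\,\mathrm{d}t$.

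To conclude, I would bound this by $\ssnm{(S_0-S_0^{h,\tau})\mathcal Rf}_{L^2(0,T;\dot H^0)}\,\ssnm{g}_{L^2(0,T;\dot H^0)}$ via Cauchy--Schwarz, use \cref{eq:S0-conv} to bound the first factor by $(\tau^{3/4-\epsilon}+h^{3/2-\epsilon})\,\ssnm{f}_{L^2(0,T;L^2(\Gamma))} \leqslant \tau^{3/4-\epsilon}+h^{3/2-\epsilon}$, and take the supremum over $f$. The step I expect to need the most care — the main, though minor, obstacle — is checking that \cref{eq:S0-conv} does apply to the non-$\mathbb F$-adapted function $f$: this is harmless, because the scheme \cref{eq:calS0} involves $f$ only through the pathwise Bochner integrals $\int_{t_j}^{t_{j+1}} Q_h\mathcal Rf(t)\,\mathrm{d}t$ (no It\^o integral appears), so the energy argument underlying \cref{eq:S0-conv} is deterministic in $\omega$ and adaptedness is irrelevant; one should also double-check the index bookkeeping in \cref{eq:S1htau-S2htau} so that it is exactly $(S_0-S_0^{h,\tau})\mathcal Rf$ — and not some shifted variant — that appears on the right. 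A duality-free alternative is to bound $\ssnm{S_1g-(S_1^{h,\tau}g)_{j+1}}_{L^2(\Gamma)}$ by $\ssnm{S_1g-(S_1^{h,\tau}g)_{j+1}}_{\dot H^{1/2+\epsilon}}$ via the trace embedding and rerun the energy/finite-element argument of \cref{lem:S0-conv} in the fractional norm $\dot H^{1/2+\epsilon}$, but this carries the extra nuisance of reconciling $\ssnm{\cdot}_{\dot H^{1/2+\epsilon}}$ with $\ssnm{\cdot}_{\dot H_h^{1/2+\epsilon}}$ on $\mathcal V_h$, which is why I would prefer the duality route.
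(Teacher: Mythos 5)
Your duality argument is correct, but it is not the route the paper takes: the paper omits the proof and indicates that it follows by repeating the energy/finite-element argument of \cref{lem:S0-conv} for the backward scheme \cref{eq:calS1}, measuring the error in a fractional norm and passing to $L^2(\Gamma)$ by the trace inequality --- i.e., essentially your ``duality-free alternative''. Your primary route instead observes that the error functional to be estimated is exactly the adjoint, via \cref{eq:S0-S1} and \cref{eq:S1htau-S2htau}, of the one already controlled in \cref{eq:S0-conv}, so that the shifted index $(S_1^{h,\tau}g)_{j+1}$ on $[t_j,t_{j+1})$ is precisely what the discrete duality identity produces and no separate jump-term estimate is needed. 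The two points you flag are indeed the only ones requiring care, and both check out: \cref{eq:S1htau-S2htau} does yield exactly $\int_0^T[(S_0-S_0^{h,\tau})\mathcal Rf,g]\,\mathrm{d}t$ with the paper's convention that $S_0^{h,\tau}\mathcal Rf$ takes the value $(S_0^{h,\tau}\mathcal Rf)_j$ on $[t_j,t_{j+1})$ (a telescoping computation using $(S_0^{h,\tau}\mathcal Rf)_0=(S_1^{h,\tau}g)_J=0$ and the self-adjointness of $A_h$ confirms the identity), and \cref{eq:S0-conv} rests on a pathwise deterministic energy argument, so its restriction to $\mathbb F$-adapted data is immaterial and it applies to the generally non-adapted maximizing $f$. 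What the duality route buys is that it reuses \cref{eq:S0-conv} verbatim and explains why the two rates coincide (data in $\dot H^{-1/2-\epsilon}$ measured in $\dot H^0$, versus data in $\dot H^0$ measured in $\dot H^{1/2+\epsilon}$, are adjoint situations); what the paper's direct route buys is independence from the adjoint identities and no need to invoke \cref{eq:S0-conv} outside its stated hypotheses.
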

\noindent Since this lemma can be proved by using similar techniques as that in
the proof of \cref{lem:S0-conv}, its proof is omitted here.

\subsection{Proof of \texorpdfstring{\cref{thm:conv}}{}}
Let $ \{\varphi_n \mid n \in \mathbb N \} \subset \dot H^2 $ be an orthonormal
basis of $ \dot H^0 $ such that (cf.~\cite[Theorem~3.2.1.3]{Grisvard1985} and
\cite[Theorem~4.A and Section 4.5]{Zeidler1995})
\[ 
  -A \varphi_n = r_n \varphi_n \quad\text{for each } n \in \mathbb N,
\]
where $ \{r_n \mid n \in \mathbb N \} $ is a nondecreasing sequence of strictly
positive numbers with limit $ +\infty $. Let $ \widetilde W(t) $, $ t \geqslant
0 $, be a cylindrical Wiener process in $ L^2(\mathcal O) $ defined by
\begin{equation}
  \label{eq:wtW}
  \widetilde W(t)(v) = \sum_{n=0}^\infty
  \dual{v,\varphi_n}_{\mathcal O}
  \, \beta_n(t) \quad \text{for all $ t \geqslant 0 $ and $ v \in \dot H^0$}.
\end{equation}
Under the condition that $ \mathbb F $ is the natural filtration of $ W(\cdot) $,
for any $ g \in L_\mathbb F^2(\Omega; L^2(0,T;\dot H^0)) $, the backward
stochastic parabolic equation
\begin{equation} 
  \label{eq:p-z-def}
  \begin{cases}
    \mathrm{d}p(t) = -(Ap + g)(t) \, \mathrm{d}t +
    z(t) \, \mathrm{d}\widetilde W(t)
    \quad \forall 0 \leqslant t \leqslant T, \\
    p(T) = 0
  \end{cases}
\end{equation}
admits a unique strong solution $ (p,z) $, and
\begin{equation} 
  \label{eq:p-z-regu}
  \ssnm{p}_{L^2(0,T;\dot H^2)} +
  \ssnm{p}_{C([0,T];\dot H^1)} +
  \ssnm{z}_{L^2(0,T;\mathcal L_2(\dot H^0, \dot H^1))}
  \leqslant C \ssnm{g}_{L^2(0,T;\dot H^0)},
\end{equation}
where $ C $ is a positive constant independent of $ g $ and $ T $. Moreover,
for any $ 0 \leqslant s < t \leqslant T $,
\begin{equation} 
  \label{eq:p-z-int}
  p(s) - e^{(t-s)A} p(t) = \int_s^t
  e^{(r-s)A} g(r) \, \mathrm{d}r -
  \int_s^t e^{(r-s)A} z(r) \, \mathrm{d}\widetilde W(r)
  \quad \mathbb P \text{-a.s.}
\end{equation}
Using the fact $ p(T) = 0 $, by \cref{eq:p-z-int} we obtain, for any $ 0
\leqslant s \leqslant T $,
\begin{equation} 
  \label{eq:p-z-int2}
  p(s) = \int_s^T e^{(r-s)A} g(r) \, \mathrm{d}r -
  \int_s^T e^{(r-s)A} z(r) \, \mathrm{d}\widetilde W(r)
  \quad \mathbb P \text{-a.s.}
\end{equation}

\begin{remark}
  For the above theoretical results of equation \cref{eq:p-z-def}, we refer the
  reader to \cite{Hu_Peng_1991} and \cite[Chapter 4]{LuZhang2021}.
\end{remark}

\begin{lemma}
  \label{lem:I-Ptau-S1g}
  Assume that $ \mathbb F $ is the natural filtration of $ W(\cdot) $. For any $
  g \in L_\mathbb F^2(\Omega;L^2(0,T;\dot H^0)) $, we have
  \begin{equation}
    \label{eq:I-Ptau-S1g}
    \ssnm{(I-\mathcal P_\tau) \mathcal E_\mathbb F S_1g}_{
      L^2(0,T;L^2(\Gamma))
    } \lesssim \tau^{1/2} \ssnm{g}_{L^2(0,T;\dot H^0)}.
  \end{equation}
\end{lemma}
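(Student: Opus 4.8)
The plan is to identify $\mathcal E_\mathbb F S_1 g$ with the first component $p$ of the solution $(p,z)$ of the backward stochastic parabolic equation \cref{eq:p-z-def}, and then to control the temporal increments of $\operatorname{tr} p$ by means of the regularity estimate \cref{eq:p-z-regu} and the identity \cref{eq:p-z-int}; here and below $\operatorname{tr}$ is the trace operator, which is bounded from $\dot H^1$ and from $\dot H^{1/2+\epsilon}$ into $L^2(\Gamma)$. First I would prove that $\mathcal E_\mathbb F S_1 g = p$. By \cref{eq:S1-def,eq:p-z-int2},
\[
  (S_1 g)(t) - p(t) = \int_t^T e^{(r-t)A} z(r) \, \mathrm{d}\widetilde W(r)
  \quad \mathbb P \text{-a.s.}, \ 0 \leqslant t \leqslant T,
\]
and since this It\^o integral has vanishing conditional expectation given $\mathcal F_t$ while $p(t)$ is $\mathcal F_t$-measurable, we get $\mathbb E_t[(S_1 g)(t)] = p(t)$ for a.e.\ $t$; consequently $S_1 g - p$ is $L^2(\Omega;L^2(0,T;\dot H^1))$-orthogonal to every $\mathbb F$-adapted process, and, as $p$ is itself $\mathbb F$-adapted, $\mathcal E_\mathbb F S_1 g = p$. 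Since $\operatorname{tr}$ commutes with conditional expectation and with time-integration, it follows that, in $L^2(0,T;L^2(\Gamma))$, $(I-\mathcal P_\tau)\mathcal E_\mathbb F S_1 g = (I-\mathcal P_\tau)\operatorname{tr} p$.

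Next, $\mathcal P_\tau$ is the orthogonal projection of $L^2(\Omega;L^2(0,T;L^2(\Gamma)))$ onto the $\mathbb F$-adapted processes that are constant on each $[t_j,t_{j+1})$, and $p \in L_\mathbb F^2(\Omega;C([0,T];\dot H^1))$ by \cref{eq:p-z-regu}, so the process equal to $\operatorname{tr} p(t_j)$ on $[t_j,t_{j+1})$ is an admissible competitor; comparing with it gives
\[
  \ssnm{(I-\mathcal P_\tau)\operatorname{tr} p}_{L^2(0,T;L^2(\Gamma))}^2
  \leqslant \sum_{j=0}^{J-1} \int_{t_j}^{t_{j+1}}
  \ssnm{\operatorname{tr}\big(p(t)-p(t_j)\big)}_{L^2(\Gamma)}^2 \, \mathrm{d}t,
\]
so it remains to bound the right-hand side by $\tau\,\ssnm{g}_{L^2(0,T;\dot H^0)}^2$.

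For this I would use \cref{eq:p-z-int} with $s=t_j$, which for $t_j \leqslant t < t_{j+1}$ gives
\[
  p(t) - p(t_j) = \big(I - e^{(t-t_j)A}\big)p(t)
  - \int_{t_j}^t e^{(r-t_j)A} g(r) \, \mathrm{d}r
  + \int_{t_j}^t e^{(r-t_j)A} z(r) \, \mathrm{d}\widetilde W(r),
\]
and estimate the trace of the three terms separately. For the first, the analytic-semigroup estimate $\ssnm{(I-e^{\delta A})v}_{\dot H^1}\lesssim \delta^{1/2}\ssnm{v}_{\dot H^2}$ (a standard consequence of \cref{eq:etA}) gives $\ssnm{(I-e^{(t-t_j)A})p(t)}_{\dot H^1}^2 \lesssim \tau\,\ssnm{p(t)}_{\dot H^2}^2$, whose contribution after summation is $\lesssim \tau\,\ssnm{p}_{L^2(0,T;\dot H^2)}^2$. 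For the third, It\^o's isometry together with the uniform boundedness of $e^{tA}$ on $\dot H^1$ (\cref{eq:etA}) gives $\ssnm{\int_{t_j}^t e^{(r-t_j)A}z(r)\,\mathrm{d}\widetilde W(r)}_{\dot H^1}^2 \lesssim \int_{t_j}^{t_{j+1}} \ssnm{z(r)}_{\mathcal L_2(\dot H^0,\dot H^1)}^2 \, \mathrm{d}r$, whose contribution is $\lesssim \tau\,\ssnm{z}_{L^2(0,T;\mathcal L_2(\dot H^0,\dot H^1))}^2$. For the second, \cref{eq:etA} yields $\nm{e^{(r-t_j)A}g(r)}_{\dot H^{1/2+\epsilon}} \lesssim (r-t_j)^{-1/4-\epsilon/2}\,\nm{g(r)}_{\dot H^0}$, so the Cauchy--Schwarz inequality gives $\ssnm{\int_{t_j}^t e^{(r-t_j)A}g(r)\,\mathrm{d}r}_{\dot H^{1/2+\epsilon}}^2 \lesssim \tau^{1/2-\epsilon} \int_{t_j}^{t_{j+1}} \ssnm{g(r)}_{\dot H^0}^2 \, \mathrm{d}r$, whose contribution is $\lesssim \tau^{3/2-\epsilon}\,\ssnm{g}_{L^2(0,T;\dot H^0)}^2$. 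Summing the three contributions, using the boundedness of $\operatorname{tr}$ into $L^2(\Gamma)$ and the a priori bound \cref{eq:p-z-regu}, and taking a square root, one arrives at \cref{eq:I-Ptau-S1g}.

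I expect the delicate point to be the middle, deterministic term: because $g$ is merely $\dot H^0$-valued, $\int_{t_j}^t e^{(r-t_j)A}g(r)\,\mathrm{d}r$ cannot be estimated in $\dot H^1$, so one must use the parabolic smoothing to land in $\dot H^{1/2+\epsilon}$ and rely on the fact that the trace operator requires only a Sobolev index strictly above $1/2$. Some care is also needed to check that the competitor $\operatorname{tr} p(t_j)$ in the second step is genuinely $\mathcal F_{t_j}$-measurable, which is where the $C([0,T];\dot H^1)$-path regularity of $p$ from \cref{eq:p-z-regu} enters, and that $\operatorname{tr}$ indeed commutes with $\mathcal E_\mathbb F$ and $\mathcal P_\tau$.
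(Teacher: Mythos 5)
Your proposal is correct and follows essentially the same route as the paper's proof: identify $\mathcal E_\mathbb F S_1 g$ with the first component $p$ of the solution of \cref{eq:p-z-def} via the vanishing conditional expectation of the It\^o integral in \cref{eq:p-z-int2}, reduce $(I-\mathcal P_\tau)p$ to the increments $p-p(t_j)$, split these via \cref{eq:p-z-int} into the same three terms, and finish with \cref{eq:p-z-regu} and the trace inequality at a Sobolev index strictly above $1/2$. The only (cosmetic) difference is that the paper measures all three terms in a single space $\dot H^\gamma$ with $1/2<\gamma<1$, whereas you use $\dot H^1$ for the semigroup and stochastic terms and $\dot H^{1/2+\epsilon}$ for the deterministic $g$-term; both choices exploit the same observation that the $g$-term forbids the index $\gamma=1$ while the trace forbids $\gamma\leqslant 1/2$.
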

\begin{proof}
  Let $ (p,z) $ be the strong solution of equation \cref{eq:p-z-def}. Since
  \cref{eq:p-z-int2} implies, for each $ 0 \leqslant t \leqslant T $,
  \[ 
    p(t) = \mathbb E_t \Big(
      \int_t^T e^{(s-t)A} g(s) \, \mathrm{d}s
    \Big) \quad \mathbb P \text{-a.s.,}
  \]
  by \cref{eq:S1-def} we then obtain
  \begin{equation} 
    p = \mathcal E_\mathbb F S_1g.
  \end{equation}
  It remains, therefore, to prove
  \begin{equation}
    \label{eq:I-Ptau-S1g2}
    \ssnm{(I-\mathcal P_\tau)p}_{L^2(0,T;L^2(\Gamma))}
    \lesssim \tau^{1/2} \ssnm{g}_{L^2(0,T;\dot H^0)}.
  \end{equation}

  To this end, we proceed as follows. Let $ 1/2 < \gamma < 1 $ be arbitrary but
  fixed. For any $ t_j \leqslant t \leqslant t_{j+1} $ with $ 0 \leqslant j < J
  $, by \cref{eq:p-z-int} we have
  \[ 
    p(t_j) - e^{(t-t_j)A}p(t) =
    \int_{t_j}^t e^{(s-t_j)A} g(s) \, \mathrm{d}s -
    \int_{t_j}^t e^{(s-t_j)A} z(s) \, \mathrm{d}\widetilde W(s)
    \quad \mathbb P \text{-a.s.,}
  \]
  and so
  \begin{small}
  \begin{align*} 
    & \ssnm{
      p(t_j) - e^{(t-t_j)A} p(t)
    }_{\dot H^\gamma}^2 \\
    \leqslant{} &
    2 \ssnm{
      \int_{t_j}^t e^{(s-t_j)A} g(s) \, \mathrm{d}s
    }_{\dot H^\gamma}^2 + 2 \ssnm{
      \int_{t_j}^t e^{(s-t_j)A} z(s) \, \mathrm{d}\widetilde W(s)
    }_{\dot H^\gamma}^2 \\
    ={} &
    2 \ssnm{
      \int_{t_j}^t e^{(s-t_j)A} g(s) \, \mathrm{d}s
    }_{\dot H^\gamma}^2 + 2
    \int_{t_j}^t \ssnm{e^{(s-t_j)A} z(s)}_{
      \mathcal L_2(\dot H^0, \dot H^\gamma)
    }^2 \, \mathrm{d}s \\
    \leqslant{} &
    2 \Big(
      \int_{t_j}^t \ssnm{ e^{(s-t_j)A} g(s) }_{\dot H^\gamma} \, \mathrm{d}s
    \Big)^2 + 2 \int_{t_j}^t
    \ssnm{ e^{(s-t_j)A} z(s) }_{\mathcal L_2(\dot H^0, \dot H^\gamma)}^2 \, \mathrm{d}s \\
    \lesssim{} &
    \Big(
      \int_{t_j}^t (s-t_j)^{-\gamma/2} \ssnm{g(s)}_{\dot H^0}
      \, \mathrm{d}s
    \Big)^2 + \int_{t_j}^t \ssnm{z(s)}_{
      \mathcal L_2(\dot H^0, \dot H^\gamma)
    }^2 \, \mathrm{d}s \quad\text{(by \cref{eq:etA})} \\
    \lesssim{} &
    (t-t_j)^{1-\gamma} \ssnm{g}_{L^2(t_j,t;\dot H^0)}^2 +
    \ssnm{z}_{L^2(t_j,t; \mathcal L_2(\dot H^0, \dot H^\gamma))}^2.
  \end{align*}
  \end{small}
  It follows that, for any $ 0 \leqslant j < J $,
  \begin{small}
  \begin{align*}
    & \int_{t_j}^{t_{j+1}}
    \ssnm{p(t_j) - e^{(t-t_j)A}p(t)}_{\dot H^\gamma}^2
    \, \mathrm{d}t \\
    \lesssim{} &
    \tau^{2-\gamma} \ssnm{g}_{L^2(t_j,t_{j+1};\dot H^0)}^2 +
    \tau \ssnm{z}_{L^2(t_j,t_{j+1};\mathcal L_2(\dot H^0, \dot H^\gamma))}^2.
  \end{align*}
  \end{small}
  Hence,
  \begin{align*}
    {} &
    \sum_{j=0}^{J-1} \int_{t_j}^{t_{j+1}}
    \ssnm{p(t_j) - e^{(t-t_j)A} p(t)}_{\dot H^\gamma}^2 \, \mathrm{d}t \\
    \lesssim{} &
    \tau^{2-\gamma} \ssnm{g}_{L^2(0,T;\dot H^0)}^2 +
    \tau \ssnm{z}_{L^2(0,T;\mathcal L_2(\dot H^0, \dot H^\gamma))}^2.
  \end{align*}
  We also have
  \begin{align*} 
    & \sum_{j=0}^{J-1} \int_{t_j}^{t_{j+1}}
    \ssnm{(I-e^{(t-t_j)A}) p(t)}_{\dot H^\gamma}^2 \, \mathrm{d}t \\
    \lesssim{} &
    \sum_{j=0}^{J-1} \int_{t_j}^{t_{j+1}}
    (t-t_j)^{2-\gamma} \ssnm{p(t)}_{\dot H^2}^2 \, \mathrm{d}t \\
    \lesssim{} &
    \tau^{2-\gamma} \ssnm{p}_{L^2(0,T;\dot H^2)}^2,
  \end{align*}
  by the evident estimate (cf.~\cref{eq:I-etAh})
  \[
    \nm{I-e^{tA}}_{\mathcal L(\dot H^2, \dot H^\gamma)}
    \lesssim t^{1-\gamma/2}.
  \]
  Consequently,
  \begin{small} 
  \begin{align}
    & \sum_{j=0}^{J-1} \ssnm{p - p(t_j)}_{
      L^2(t_j,t_{j+1};\dot H^\gamma)
    }^2 \notag \\
    \lesssim{} &
    \sum_{j=0}^{J-1} \int_{t_j}^{t_{j+1}}
    \ssnm{p(t_j) - e^{(t-t_j)A}p(t)}_{\dot H^\gamma}^2 \, \mathrm{d}t +
    \sum_{j=0}^{J-1} \int_{t_j}^{t_{j+1}}
    \ssnm{(I-e^{(t-t_j)A})p(t)}_{\dot H^\gamma}^2 \, \mathrm{d}t \notag \\
    \lesssim{} & \tau^{2-\gamma} \ssnm{g}_{L^2(0,T;\dot H^0)}^2 +
    \tau \ssnm{z}_{L^2(0,T;\mathcal L_2(\dot H^0, \dot H^\gamma))}^2 +
    \tau^{2-\gamma} \ssnm{p}_{L^2(0,T;\dot H^2)}^2 \notag \\
    \lesssim{} &
    \tau \ssnm{g}_{L^2(0,T;\dot H^0)}^2,
    \label{eq:p-pj2}
  \end{align}
  \end{small}
  by \cref{eq:p-z-regu}. Therefore, from the inequality
  \[
    \ssnm{(I-\mathcal P_\tau)p}_{
      L^2(t_j,t_{j+1};\dot H^\gamma)
    } \leqslant \ssnm{p - p(t_j)}_{L^2(t_j,t_{j+1};\dot H^\gamma)}
    \quad \forall 0 \leqslant j < J,
  \]
  we conclude that
  \begin{equation} 
    \label{eq:p-pj}
    \ssnm{(I-\mathcal P_\tau)p}_{
      L^2(0,T;\dot H^\gamma)
    } \lesssim \tau^{1/2} \ssnm{g}_{L^2(0,T;\dot H^0)}.
  \end{equation}
  The desired estimate \cref{eq:I-Ptau-S1g2} then follows from the trace
  inequality (cf.~\cite[Theorem~1.5.1.2]{Grisvard1985})
  \[
    \nm{v}_{L^2(\Gamma)} \lesssim \nm{v}_{\dot H^{\gamma}}
    \quad \forall v \in \dot H^{\gamma}.
  \]
  This completes the proof.
\end{proof}

\begin{lemma}
  For any $ v \in L^2(\Omega;L^2(0,T;\dot H^1)) $, we have
  \begin{equation}
    \label{eq:trE}
    \mathcal E_\mathbb F \operatorname{tr} v =
    \operatorname{tr} \mathcal E_\mathbb Fv.
  \end{equation}
\end{lemma}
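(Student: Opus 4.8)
The plan is to use that $\mathcal E_\mathbb F$ acts only on the probabilistic and temporal variables, whereas $\operatorname{tr}$ is a fixed bounded linear operator acting on the spatial variable, so that the two commute. First I would introduce a ``dual trace'': since $\operatorname{tr}\in\mathcal L(\dot H^1,L^2(\Gamma))$, for each $\eta\in L^2(\Gamma)$ the map $\phi\mapsto\dual{\operatorname{tr}\phi,\eta}_\Gamma$ is a bounded linear functional on the Hilbert space $\dot H^1$, so the Riesz representation theorem yields a unique $B\eta\in\dot H^1$ with
\[
  (\phi,B\eta)_{\dot H^1}=\dual{\operatorname{tr}\phi,\eta}_\Gamma
  \qquad\text{for all }\phi\in\dot H^1,
\]
and $\eta\mapsto B\eta$ defines an operator $B\in\mathcal L(L^2(\Gamma),\dot H^1)$ (equivalently $B=(-A)^{-1}\operatorname{tr}^{*}$).

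Next, since $\mathcal E_\mathbb F v\in L_\mathbb F^2(\Omega;L^2(0,T;\dot H^1))$ and $\operatorname{tr}$ is continuous, the composition $(t,\omega)\mapsto(\operatorname{tr}\mathcal E_\mathbb F v)(t,\omega)$ is again $\mathbb F$-progressively measurable, and it is square integrable because $\operatorname{tr}$ is bounded; hence $\operatorname{tr}\mathcal E_\mathbb F v\in L_\mathbb F^2(\Omega;L^2(0,T;L^2(\Gamma)))$. For the same reason, $Bw\in L_\mathbb F^2(\Omega;L^2(0,T;\dot H^1))$ whenever $w\in L_\mathbb F^2(\Omega;L^2(0,T;L^2(\Gamma)))$. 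Then, for an arbitrary such $w$, I would use the pointwise identity above together with the orthogonality property of the projection $\mathcal E_\mathbb F$ in $L^2(\Omega;L^2(0,T;\dot H^1))$, tested against the admissible process $Bw$, to obtain
\[
  \mathbb E\int_0^T\dual{(\operatorname{tr} v-\operatorname{tr}\mathcal E_\mathbb F v)(t),w(t)}_\Gamma\,\mathrm{d}t
  =\mathbb E\int_0^T\big((v-\mathcal E_\mathbb F v)(t),(Bw)(t)\big)_{\dot H^1}\,\mathrm{d}t=0 .
\]
Together with $\operatorname{tr}\mathcal E_\mathbb F v\in L_\mathbb F^2(\Omega;L^2(0,T;L^2(\Gamma)))$, this says precisely that $\operatorname{tr}\mathcal E_\mathbb F v$ is the $L^2(\Omega;L^2(0,T;L^2(\Gamma)))$-orthogonal projection of $\operatorname{tr} v$ onto $L_\mathbb F^2(\Omega;L^2(0,T;L^2(\Gamma)))$, which is \cref{eq:trE}.

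I do not expect any real obstacle; the only points needing care are the bookkeeping facts that (i) composing an $\mathbb F$-progressively measurable process with a continuous deterministic map keeps it $\mathbb F$-progressively measurable, so that both $\operatorname{tr}\mathcal E_\mathbb F v$ and $Bw$ belong to the required spaces, and (ii) the Riesz identity $(\phi,B\eta)_{\dot H^1}=\dual{\operatorname{tr}\phi,\eta}_\Gamma$, which transfers the $L^2(\Gamma)$ inner product to the $\dot H^1$ inner product in which the defining orthogonality of $\mathcal E_\mathbb F$ is available. An even shorter alternative is to observe that $L_\mathbb F^2(\Omega;L^2(0,T;X))$ is exactly the set of square-integrable, $X$-valued functions on $\Omega\times[0,T]$ that are measurable with respect to the progressive $\sigma$-algebra $\Sigma$, so that $\mathcal E_\mathbb F$ coincides with the vector-valued conditional expectation $\mathbb E[\,\cdot\mid\Sigma\,]$ with respect to $\mathbb P\otimes\mathrm{d}t$; since such a conditional expectation commutes with every bounded linear operator, applying it with $\operatorname{tr}$ gives \cref{eq:trE} immediately.
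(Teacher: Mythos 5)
Your argument is correct and is essentially the paper's own proof: the paper also introduces the adjoint $\operatorname{tr}^{*}\colon L^2(\Gamma)\to\dot H^{-1}$, tests against an arbitrary adapted $\varphi$, and uses the defining orthogonality of $\mathcal E_{\mathbb F}$ in $L^2(\Omega;L^2(0,T;\dot H^1))$ to identify $\operatorname{tr}\mathcal E_{\mathbb F}v$ as the projection of $\operatorname{tr}v$; your operator $B=(-A)^{-1}\operatorname{tr}^{*}$ is just the Riesz representative of the same functional. Your version is slightly more careful in checking that $\operatorname{tr}\mathcal E_{\mathbb F}v$ and $Bw$ remain progressively measurable, which the paper leaves implicit.
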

\begin{proof}
  Let $ \operatorname{tr}^*: L^2(\Gamma) \to \dot H^{-1} $ be the adjoint of $
  \operatorname{tr} $. By definition, $ \mathcal E_\mathbb F \operatorname{tr} v $ is the
  $ L^2(\Omega;L^2(0,T;L^2(\Gamma))) $-orthogonal projection of $
  \operatorname{tr} v $ onto $ L_\mathbb F^2(\Omega;L^2(0,T;L^2(\Gamma))) $, and
  $ \mathcal E_\mathbb F v $ is the $ L^2(\Omega;L^2(0,T;\dot H^1)) $-orthogonal
  projection of $ v $ onto $ L_\mathbb F^2(\Omega;L^2(0,T;\dot H^1)) $. For any
  $ \varphi \in L_\mathbb F^2(\Omega;L^2(0,T; L^2(\Gamma))) $, we have
  \begin{align*}
    \int_0^T \dual{
      \mathcal E_\mathbb F \operatorname{tr} v, \varphi
    } \, \mathrm{d}t & =
    \int_0^T \dual{
      \operatorname{tr}v, \varphi
    } \, \mathrm{d}t =
    \int_0^T \mathbb E \dual{\operatorname{tr}^* \varphi, v}_{\dot H^1}
    \, \mathrm{d}t \\
    &= \int_0^T \mathbb E \dual{
      \operatorname{tr}^*\varphi,
      \mathcal E_\mathbb Fv
    }_{\dot H^1} \, \mathrm{d}t =
    \int_0^T \dual{
      \operatorname{tr}\mathcal E_\mathbb Fv, \varphi
    } \, \mathrm{d}t,
  \end{align*}
  which implies \cref{eq:trE}.
\end{proof}

\begin{lemma}
  \label{lem:I-Ptau-u}
  Assume that $ \mathbb F $ is the natural filtration of $ W(\cdot) $.  For any
  $ g \in L_\mathbb F^2(\Omega;L^2(0,T;\dot H^0)) $, we have
  \begin{equation}
    \label{eq:I-Ptau-u}
    \ssnm{(I-\mathcal P_\tau)u}_{
      L^2(0,T;L^2(\Gamma))
    } \lesssim \tau^{1/2} \nu^{-1} \ssnm{g}_{L^2(0,T;\dot H^0)},
  \end{equation}
where
\[
  u := \mathcal P_{[u_*,u^*]}
  \big(-\nu^{-1} \mathcal E_\mathbb F(\operatorname{tr}S_1g) \big).
\]
\end{lemma}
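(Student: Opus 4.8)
The plan is to reduce the estimate \cref{eq:I-Ptau-u} to \cref{lem:I-Ptau-S1g} by exploiting the nonexpansiveness of the pointwise projection $\mathcal P_{[u_*,u^*]}$ together with the commutation identity \cref{eq:trE}. First I would observe that, by \cref{eq:trE} applied to $S_1g \in L_\mathbb F^2(\Omega;L^2(0,T;\dot H^1))$ (which is legitimate since \cref{eq:S1-C} gives $S_1 g \in L^2(0,T;\dot H^1)$), we may rewrite
\[
  u = \mathcal P_{[u_*,u^*]}\big(-\nu^{-1}\operatorname{tr}\mathcal E_\mathbb F S_1 g\big).
\]
Set $w := -\nu^{-1}\operatorname{tr}\mathcal E_\mathbb F S_1 g \in L_\mathbb F^2(\Omega;L^2(0,T;L^2(\Gamma)))$, so that $u = \mathcal P_{[u_*,u^*]}(w)$.

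Next I would use the elementary fact that the truncation map $\mathcal P_{[u_*,u^*]}$ is $1$-Lipschitz on $\mathbb R$, hence on $L^2$-type spaces, and moreover commutes with conditional expectation in the sense relevant here: since $\mathcal P_\tau$ acts on each subinterval $[t_j,t_{j+1})$ as the composition of a time-average and the conditional expectation $\mathbb E_{t_j}$, and both averaging and $\mathbb E_{t_j}$ are contractions that do not increase distance to the constant-on-$[t_j,t_{j+1})$, $\mathcal F_{t_j}$-measurable best approximant, the key pointwise bound is
\[
  \ssnm{(I-\mathcal P_\tau)\mathcal P_{[u_*,u^*]}(w)}_{L^2(t_j,t_{j+1};L^2(\Gamma))}
  \leqslant \ssnm{(I-\mathcal P_\tau)w}_{L^2(t_j,t_{j+1};L^2(\Gamma))}.
\]
The cleanest way to see this is: $(\mathcal P_\tau v)|_{[t_j,t_{j+1})}$ is precisely the $L^2(\Omega;L^2(t_j,t_{j+1};L^2(\Gamma)))$-orthogonal projection of $v$ onto the (closed, convex) subspace of processes that are constant in $t$ on $[t_j,t_{j+1})$ and $\mathcal F_{t_j}$-measurable; since applying $\mathcal P_{[u_*,u^*]}$ to such a constant still lands in that subspace and $\mathcal P_{[u_*,u^*]}$ is nonexpansive, $\mathcal P_{[u_*,u^*]}(w)$ is at least as close to that subspace as $w$ is, i.e.\ the displayed inequality. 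Summing over $0 \leqslant j < J$ gives $\ssnm{(I-\mathcal P_\tau)u}_{L^2(0,T;L^2(\Gamma))} \leqslant \ssnm{(I-\mathcal P_\tau)w}_{L^2(0,T;L^2(\Gamma))}$.

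Finally I would conclude: $\ssnm{(I-\mathcal P_\tau)w}_{L^2(0,T;L^2(\Gamma))} = \nu^{-1}\ssnm{(I-\mathcal P_\tau)\operatorname{tr}\mathcal E_\mathbb F S_1 g}_{L^2(0,T;L^2(\Gamma))} = \nu^{-1}\ssnm{(I-\mathcal P_\tau)\mathcal E_\mathbb F\operatorname{tr} S_1 g}_{L^2(0,T;L^2(\Gamma))}$ by \cref{eq:trE} again (or just keep the $\operatorname{tr}\mathcal E_\mathbb F$ order throughout), and then \cref{lem:I-Ptau-S1g} — whose statement is exactly $\ssnm{(I-\mathcal P_\tau)\mathcal E_\mathbb F S_1 g}_{L^2(0,T;L^2(\Gamma))} \lesssim \tau^{1/2}\ssnm{g}_{L^2(0,T;\dot H^0)}$ — yields \cref{eq:I-Ptau-u}. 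The main obstacle, and the only point needing real care, is the nonexpansiveness step: one must argue cleanly that $I-\mathcal P_\tau$ does not amplify the truncation, which hinges on identifying $\mathcal P_\tau$ restricted to each $[t_j,t_{j+1})$ as an orthogonal projection onto a convex cone that is stable under $\mathcal P_{[u_*,u^*]}$, rather than on any smoothness of $S_1 g$; everything else is a direct citation of \cref{lem:I-Ptau-S1g} and \cref{eq:trE}.
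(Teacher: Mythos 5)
Your proof is correct. It rests on the same two ingredients as the paper's argument --- the $1$-Lipschitz property of the truncation $\mathcal P_{[u_*,u^*]}$ and the temporal regularity of $p := \mathcal E_\mathbb F S_1 g$ --- but the mechanism for the key step is genuinely different. You treat $\mathcal P_\tau$ on each $[t_j,t_{j+1})$ as the orthogonal projection onto the closed subspace $V_j$ of constant-in-time, $\mathcal F_{t_j}$-measurable processes, observe that $V_j$ is stable under the pointwise truncation, and compare $u=\mathcal P_{[u_*,u^*]}(w)$ with $\mathcal P_{[u_*,u^*]}(\mathcal P_\tau w)\in V_j$; this yields $\ssnm{(I-\mathcal P_\tau)u}_{L^2(0,T;L^2(\Gamma))}\leqslant\ssnm{(I-\mathcal P_\tau)w}_{L^2(0,T;L^2(\Gamma))}$ and lets you invoke \cref{lem:I-Ptau-S1g} as a black box. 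The paper instead compares $u(t)$ with the pointwise value $u(t_j)=\mathcal P_{[u_*,u^*]}\big(-\nu^{-1}\operatorname{tr}p(t_j)\big)$, which requires first noting $p\in L_\mathbb F^2(\Omega;C([0,T];\dot H^1))$ so that $u(t_j)$ is well defined and $\mathcal F_{t_j}$-measurable, and then uses the intermediate estimate \cref{eq:p-pj2} from inside the proof of \cref{lem:I-Ptau-S1g} (together with the trace inequality) rather than that lemma's final statement. Your route buys a cleaner modular structure and avoids any appeal to pathwise continuity of $p$; the paper's buys an explicit, elementary comparison element at the cost of re-opening the earlier proof. Two small remarks: the set you project onto is a closed linear subspace rather than merely a convex cone, and the only facts actually needed are that $\mathcal P_\tau w\in V_j$ and $\mathcal P_{[u_*,u^*]}(V_j)\subset V_j$, so stated this way the nonexpansiveness step is entirely routine; and the application of \cref{eq:trE} requires $S_1g\in L^2(\Omega;L^2(0,T;\dot H^1))$, which you correctly secure from \cref{eq:S1-C}.
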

\begin{proof}
  Let $ (p,z) $ be the solution of equation \cref{eq:p-z-def}. As stated in the
  proof of \cref{lem:I-Ptau-S1g}, we have
  \[
    p = \mathcal E_\mathbb F S_1 g,
  \]
  so that by \cref{eq:trE} we get
  \[ 
    u = \mathcal P_{[u_*,u^*]}
    \big( -\nu^{-1} \operatorname{tr}p \big).
  \]
  From the fact $ p \in L_\mathbb F^2(\Omega;C([0,T];\dot H^1)) $, we then
  conclude that
  \[
    u \in L_\mathbb F^2(\Omega;C([0,T];L^2(\Gamma))).
  \]
  Moreover,
  \begin{align*} 
    & \sum_{j=0}^{J-1} \ssnm{u - u(t_j)}_{
      L^2(t_j,t_{j+1};L^2(\Gamma))
    }^2 \\
    ={}&
    \sum_{j=0}^{J-1} \int_{t_j}^{t_{j+1}} \ssnm{
      \mathcal P_{[u_*,u^*]} (- \nu^{-1} \operatorname{tr}p(t)) -
      \mathcal P_{[u_*,u^*]} (- \nu^{-1} \operatorname{tr}p(t_j))
    }_{L^2(\Gamma)}^2 \, \mathrm{d}t \\
    \leqslant{} &
    \sum_{j=0}^{J-1} \int_{t_j}^{t_{j+1}} \ssnm{
      (p(t) - p(t_j)) / \nu
    }_{L^2(\Gamma)}^2 \, \mathrm{d}t \\
    \lesssim{} &
    \tau \nu^{-2} \ssnm{g}_{L^2(0,T;\dot H^0)}^2,
  \end{align*}
  by \cref{eq:p-pj2} and the trace inequality
  \[
    \nm{v}_{L^2(\Gamma)} \lesssim
    \nm{v}_{\dot H^\gamma} \quad \forall v \in \dot H^\gamma.
  \]
  Therefore, \cref{eq:I-Ptau-u} follows from the inequality
  \[ 
    \ssnm{(I-\mathcal P_\tau)u}_{
      L^2(0,T;L^2(\Gamma))
    } \leqslant \Big(
      \sum_{j=0}^{J-1} \ssnm{u - u(t_j)}_{
        L^2(t_j,t_{j+1};L^2(\Gamma))
      }^2
    \Big)^{1/2}.
  \]
  This completes the proof.
\end{proof}



Finally, we will use the argument in the numerical analysis of optimal control
problems with PDE constraints (cf.~\cite[Theorem 3.4]{Hinze2009}) to prove
\cref{thm:conv}.

\medskip\noindent{\bf Proof of \cref{thm:conv}}. In this proof, $ C $ is a
  positive constant depending only on $ u_* $, $ u^* $, $ y_d $, $ \epsilon $, $
  \sigma $, $ \nu $, $ \mathcal O $, $ T $ and the regularity parameters of $
  \mathcal K_h $, and its value may differ in different places. Let $ \bar P $
  be defined by \cref{eq:barP-def}, and define
  \begin{equation} 
    \label{eq:P-def}
    P := S_1^{h,\tau}(S_0\mathcal R\bar u + G - y_d).
  \end{equation}
  By \cref{eq:S0-conv} and the fact $ \bar u \in U_\text{ad} $, we obtain
  \begin{equation} 
    \label{eq:S0-calS0-u}
    \ssnm{(S_0 - S_0^{h,\tau})\mathcal R\bar u}_{
      L^2(0,T;\dot H^0)
    } \leqslant C ( \tau^{3/4-\epsilon} + h^{3/2-\epsilon} ).
  \end{equation}
  By \cref{eq:S0R-C,eq:G-regu} we have
  \[ 
    \ssnm{S_0 \mathcal R \bar u + G - y_d}_{L^2(0,T;\dot H^0)}
    \leqslant C,
  \]
  so that \cref{eq:S1-calS1} implies
  \begin{equation} 
    \label{eq:barp-P}
    \Big(
      \sum_{j=0}^{J-1} \ssnm{\bar p - P_{j+1}}_{
        L^2(t_j,t_{j+1};L^2(\Gamma))
      }^2
    \Big)^{1/2} \leqslant
    C( \tau^{3/4-\epsilon} + h^{3/2-\epsilon} ).
  \end{equation}
  By the definition of $ \mathcal P_\tau $, we have the following two evident
  equalities:
  \begin{small}
  \begin{align}
    & \int_0^T \dual{\bar U, \mathcal P_\tau \bar u} \, \mathrm{d}t =
    \int_0^T \dual{\bar U, \bar u} \, \mathrm{d}t,
    \label{eq:U-Ptauu-u} \\
    & \ssnm{\bar u - \bar U}_{L^2(0,T;L^2(\Gamma))}^2 =
    \ssnm{\bar u - \mathcal P_\tau u}_{L^2(0,T;L^2(\Gamma))}^2 +
    \ssnm{\bar U - \mathcal P_\tau u}_{L^2(0,T;L^2(\Gamma))}^2.
    \label{eq:u-U-ptauu}
  \end{align}
  \end{small}
  Since \cref{eq:conv-2} follows from
  \cref{eq:conv-1,lem:I-Ptau-S1g,lem:I-Ptau-u}, we only need to prove
  \cref{eq:conv-1}. The rest of the proof is
  divided into the following three steps.

  {\it Step 1.} Let us prove that
  \begin{small}
  \begin{equation}
    \label{eq:700}
    \nu \ssnm{\bar u - \bar U}_{
      L^2(0,T;L^2(\Gamma))
    }^2 + \frac12 \ssnm{
      S_0\mathcal R\bar u - S_0^{h,\tau}\mathcal R\bar U
    }_{
      L^2(0,T;\dot H^0)
    }^2 \leqslant I_0 + I_1 + I_2,
  \end{equation}
  \end{small}
  where
  \begin{align} 
    I_0 &:= \frac12 \ssnm{
      S_0\mathcal R\bar u - S_0^{h,\tau}\mathcal R\mathcal P_\tau \bar u
    }_{
      L^2(0,T;\dot H^0)
    }^2, \label{eq:I0} \\
    I_1 &:= \int_0^T [
      G - G_{h,\tau},
      S_0^{h,\tau} \mathcal R(\bar U - \mathcal P_\tau \bar u)
    ] \, \mathrm{d}t,  \label{eq:I1} \\
    I_2 & := \sum_{j=0}^{J-1}
    \int_{t_j}^{t_{j+1}} \dual{
      \bar p - \mathbb E_{t_j} P_{j+1}, \bar U - \bar u
    } \, \mathrm{d}t. \label{eq:I2}
  \end{align}
  Inserting $ u := \bar U $ into \eqref{eq:optim-cond} yields
  \begin{align*}
    \int_0^T \dual{
      \bar p + \nu \bar u,
      \bar U - \bar u
    } \, \mathrm{d}t \geqslant 0,
  \end{align*}
  which implies
  \begin{equation} 
    \label{eq:800}
    \nu \int_0^T \dual{\bar u, \bar u - \bar U} \, \mathrm{d}t
    \leqslant \int_0^T \dual{\bar p, \bar U - \bar u} \, \mathrm{d}t.
  \end{equation}
  Inserting $ U : = \mathcal P_\tau \bar u $ into \cref{eq:discr-optim} gives
  \begin{align*} 
    \sum_{j=0}^{J-1} \int_{t_j}^{t_{j+1}}
    \Dual{
      \bar P_{j+1} + \nu \bar U, \,
      \mathcal P_\tau \bar u - \bar U
    } \, \mathrm{d}t \geqslant 0,
  \end{align*}
  which, together with \cref{eq:U-Ptauu-u}, implies
  \begin{equation} 
    \label{eq:801}
    -\nu \int_0^T \dual{\bar U,\bar u - \bar U} \, \mathrm{d}t
    \leqslant \sum_{j=0}^{J-1} \int_{t_j}^{t_{j+1}}
    \dual{\bar P_{j+1}, \mathcal P_\tau \bar u - \bar U}
    \, \mathrm{d}t.
  \end{equation}
  From \cref{eq:800,eq:801} we conclude that
  \begin{small}
  \begin{align*} 
    & \nu \ssnm{\bar u - \bar U}_{
      L^2(0,T; L^2(\Gamma))
    }^2 \\
    \leqslant{} &
    \int_0^T \dual{
      \bar p, \bar U - \bar u
    } \mathrm{d}t +
    \sum_{j=0}^{J-1} \int_{t_j}^{t_{j+1}} \dual{
      \bar P_{j+1}, \mathcal P_\tau \bar u - \bar U
    } \mathrm{d}t \\
    ={} &
    \sum_{j=0}^{J-1} \int_{t_j}^{t_{j+1}}
    \dual{
      \bar p - \mathbb E_{t_j} P_{j+1}, \bar U - \bar u
    } \, \mathrm{d}t + \sum_{j=0}^{J-1}
    \int_{t_j}^{t_{j+1}} \dual{
      \mathbb E_{t_j} P_{j+1}, \bar U - \bar u
    } \, \mathrm{d}t \\
    & \qquad \qquad {} +
    \sum_{j=0}^{J-1} \int_{t_j}^{t_{j+1}}
    \dual{\bar P_{j+1}, \mathcal P_\tau \bar u - \bar U}
    \, \mathrm{d}t \\
    ={} &
    I_2 \!+\! \sum_{j=0}^{J-1} \! \int_{t_j}^{t_{j+1}} \!
    \dual{\mathbb E_{t_j} P_{j+1}, \bar U \!-\! \bar u} \mathrm{d}t +
    \sum_{j=0}^{J-1} \! \int_{t_j}^{t_{j+1}} \!
    \dual{\bar P_{j+1}, \mathcal P_\tau \bar u \!-\! \bar U}
    \mathrm{d}t \quad\text{(by \cref{eq:I2})} \\
    ={} &
    I_2 + \sum_{j=0}^{J-1}
    \int_{t_j}^{t_{j+1}} \dual{
      P_{j+1} - \bar P_{j+1},
      \bar U - \mathcal P_\tau \bar u
    } \, \mathrm{d}t,
  \end{align*}
  \end{small}
  by the definition of $ \mathcal P_\tau $.  Since
  \begin{small}
  \begin{align*} 
    & \sum_{j=0}^{J-1} \int_{t_j}^{t_{j+1}}
    \dual{
      P_{j+1} - \bar P_{j+1}, \bar U - \mathcal P_\tau \bar u
    } \, \mathrm{d}t \notag \\
    ={} &
    \sum_{j=0}^{J-1} \int_{t_j}^{t_{j+1}}
    \dual{
      \big(
        S_1^{h,\tau}(
        S_0\mathcal R\bar u -
        S_0^{h,\tau}\mathcal R \bar U + G - G_{h,\tau}
        )
      \big)_{j+1}, \bar U - \mathcal P_\tau\bar u
    } \, \mathrm{d}t
    \quad\text{(by \cref{eq:barP-def,eq:P-def})} \notag \\
    ={} &
    \int_0^T [
      S_0\mathcal R\bar u - S_0^{h,\tau}\mathcal R\bar U +
      G - G_{h,\tau},
      S_0^{h,\tau} \mathcal R(\bar U - \mathcal P_\tau\bar u)
    ] \, \mathrm{d}t
    \quad\text{(by \cref{eq:S1htau-S2htau})} \\
    ={} &
    I_1 + \int_0^T [
      S_0\mathcal R\bar u - S_0^{h,\tau}\mathcal R\bar U,
      S_0^{h,\tau} \mathcal R(\bar U - \mathcal P_\tau\bar u)
    ] \, \mathrm{d}t \quad\text{(by \cref{eq:I1}),}
  \end{align*}
  \end{small}
  it follows that
  \begin{equation} 
    \label{eq:120}
    \nu \ssnm{\bar u - \bar U}_{L^2(0,T;L^2(\Gamma))}^2
    \leqslant
    I_1 + I_2 + \int_0^T [
      S_0 \mathcal R\bar u - S_0^{h,\tau} \mathcal R\bar U,
      S_0^{h,\tau} \mathcal R(\bar U - \mathcal P_\tau \bar u)
    ] \, \mathrm{d}t.
  \end{equation}
  We also have
  \begin{align*} 
    & \int_0^T [
      S_0 \mathcal R\bar u - S_0^{h,\tau} \mathcal R\bar U,
      S_0^{h,\tau} \mathcal R(\bar U - \mathcal P_\tau \bar u)
    ] \, \mathrm{d}t \\
    ={} &
    -\ssnm{S_0 \mathcal R\bar u - S_0^{h,\tau} \mathcal R\bar U}_{
      L^2(0,T;\dot H^0)
    }^2 + \int_0^T [
      S_0\mathcal R\bar u - S_0^{h,\tau}\mathcal R \bar U,
      S_0 \mathcal R\bar u - S_0^{h,\tau} \mathcal R \mathcal P_\tau \bar u
    ] \, \mathrm{d}t \\
    \leqslant{} &
    -\frac12 \ssnm{S_0\mathcal R\bar u - S_0^{h,\tau}\mathcal R\bar U}_{
      L^2(0,T;\dot H^0)
    }^2 + \frac12 \ssnm{
      S_0\mathcal R\bar u - S_0^{h,\tau}\mathcal R\mathcal P_\tau \bar u
    }_{
      L^2(0,T;\dot H^0)
    }^2 \\
    ={} &
    -\frac12 \ssnm{S_0\mathcal R\bar u - S_0^{h,\tau}\mathcal R\bar U}_{
      L^2(0,T;\dot H^0)
    }^2 + I_0 \quad\text{(by \cref{eq:I0}).}
  \end{align*}
  Inserting the above inequality into \cref{eq:120} yields
  \[ 
    \nu \ssnm{\bar u - \bar U}_{L^2(0,T;\dot H^0)}^2
    \leqslant I_1 + I_2 - \frac12
    \ssnm{S_0\mathcal R\bar u - S_0^{h,\tau}\mathcal R\bar U}_{
      L^2(0,T;\dot H^0)
    }^2 + I_0,
  \]
  and then a simple calculation proves \cref{eq:700}.

  {\it Step 2}. Let us estimate $ I_0 $, $ I_1 $ and $ I_2 $.  For $ I_0 $, we
  have
  \begin{align*} 
    I_0 & = \frac12 \ssnm{
      (S_0 - S_0^{h,\tau}) \mathcal R\bar u +
      S_0^{h,\tau}\mathcal R(I-\mathcal P_\tau)\bar u
    }_{L^2(0,T;\dot H^0)}^2 \quad\text{(by \cref{eq:I0})} \\
    & \leqslant \ssnm{(S_0 - S_0^{h,\tau})\mathcal R\bar u}_{
      L^2(0,T;\dot H^0)
    }^2 + \ssnm{S_0^{h,\tau}\mathcal R(I-\mathcal P_\tau)\bar u}_{
      L^2(0,T;\dot H^0)
    }^2 \\
    & \leqslant
    \ssnm{(S_0 - S_0^{h,\tau})\mathcal R\bar u}_{
      L^2(0,T;\dot H^0)
    }^2 + C \ssnm{(I-\mathcal P_\tau)\bar u}_{
      L^2(0,T;L^2(\Gamma))
    }^2 \quad\text{(by \cref{eq:S0-stab})},
  \end{align*}
  so that \cref{eq:S0-calS0-u} implies
  \begin{equation}
    \label{eq:I0-esti}
    I_0 \leqslant C(\tau^{3/4-\epsilon} + h^{3/2-\epsilon})^2 +
    C \ssnm{(I-\mathcal P_\tau)\bar u}_{
      L^2(0,T;L^2(\Gamma))
    }^2.
  \end{equation}
  For $ I_1 $, we have
  \begin{align*} 
    I_1 & \leqslant
    \ssnm{G - G_{h,\tau}}_{
      L^2(0,T;\dot H^0)
    } \ssnm{S_0^{h,\tau} \mathcal R(\bar U - \mathcal P_\tau\bar u)}_{
      L^2(0,T;\dot H^0)
    } \quad\text{(by \cref{eq:I1})} \\
    & \leqslant
    C\ssnm{G - G_{h,\tau}}_{
      L^2(0,T;\dot H^0)
    } \ssnm{\bar U - \mathcal P_\tau\bar u}_{
      L^2(0,T;L^2(\Gamma))
    } \quad\text{(by \cref{eq:S0-stab})} \\
    & \leqslant C
    \ssnm{G - G_{h,\tau}}_{
      L^2(0,T;\dot H^0)
    } \ssnm{\bar u - \bar U}_{
      L^2(0,T;L^2(\Gamma))
    } \quad\text{(by \cref{eq:u-U-ptauu})},
  \end{align*}
  so that by \cref{eq:G-Ghtau} we obtain
  \begin{equation}
    \label{eq:I1-esti}
    I_1 \leqslant C (\tau^{1/4-\epsilon} + h^{1/2-\epsilon})
    \ssnm{\bar u - \bar U}_{L^2(0,T;L^2(\Gamma))}.
  \end{equation}
  Now let us estimate $ I_2 $. By definition, we have
  \[
    (\mathcal P_\tau \bar p)(t_j) =
    (\mathcal P_\tau \mathcal E_\mathbb F \bar p)(t_j),
    \quad \forall 0 \leqslant j < J,
  \]
  and so
  \begin{equation}
    \label{eq:I2-1}
    \sum_{j=0}^{J-1}
    \ssnm{
      \mathcal E_\mathbb F\bar p - (\mathcal P_\tau \bar p)(t_j)
    }_{L^2(t_j,t_{j+1};L^2(\Gamma))}^2 =
    \ssnm{(I-\mathcal P_\tau)\mathcal E_\mathbb F \bar p}_{
      L^2(0,T;L^2(\Gamma))
    }^2.
  \end{equation}
  We also have
  \begin{align*}
    & \sum_{j=0}^{J-1} \ssnm{
      (\mathcal P_\tau \bar p)(t_j) -
      \mathbb E_{t_j} P_{j+1}
    }_{L^2(t_j,t_{j+1};L^2(\Gamma))}^2 \\
    ={} &
    \sum_{j=0}^{J-1} \tau \ssnm{
      (\mathcal P_\tau \bar p)(t_j) -
      \mathbb E_{t_j} P_{j+1}
    }_{L^2(\Gamma)}^2 \\
    ={} &
    \sum_{j=0}^{J-1} \tau^{-1} \ssnm{
      \mathbb E_{t_j} \int_{t_j}^{t_{j+1}}
      \bar p(t) - P_{j+1} \, \mathrm{d}t
    }_{L^2(\Gamma)}^2 \\
    \leqslant{} & \sum_{j=0}^{J-1} \tau^{-1} \ssnm{
      \int_{t_j}^{t_{j+1}}
      \bar p(t) - P_{j+1} \, \mathrm{d}t
    }_{L^2(\Gamma)}^2 \\
    \leqslant{} &
    \sum_{j=0}^{J-1} \ssnm{
      \bar p - P_{j+1}
    }_{L^2(t_j,t_{j+1};L^2(\Gamma))}^2.
  \end{align*}
  Combing the above estimate with \cref{eq:I2-1} yields
  \begin{small}
  \begin{align*}
    & \Big(
      \sum_{j=0}^{J-1} \ssnm{
        \mathcal E_\mathbb F\bar p - \mathbb E_{t_j} P_{j+1}
      }_{L^2(t_j,t_{j+1};L^2(\Gamma))}^2
    \Big)^{1/2} \\
    \leqslant{} &
    \Big(
      \sum_{j=0}^{J-1} \ssnm{
        \mathcal E_\mathbb F\bar p - (\mathcal P_\tau\bar p)(t_j)
      }_{L^2(t_j,t_{j+1};L^2(\Gamma))}^2
    \Big)^{1/2} + {} \\
    & \quad \Big(
      \sum_{j=0}^{J-1} \ssnm{
        (\mathcal P_\tau \bar p)(t_j) -
        \mathbb E_{t_j} P_{j+1}
      }_{
        L^2(t_j,t_{j+1};L^2(\Gamma))
      }^2
    \Big)^{1/2} \\
    \leqslant{} &
    \ssnm{(I-\mathcal P_\tau)\mathcal E_\mathbb F \bar p}_{L^2(0,T;L^2(\Gamma))} +
    \Big(
      \sum_{j=0}^{J-1} \ssnm{\bar p - P_{j+1}}_{
        L^2(t_j,t_{j+1};L^2(\Gamma))
      }^2
    \Big)^{1/2}.
  \end{align*}
  \end{small}
  Hence,
  \begin{align*}
    I_2 &= \sum_{j=0}^{J-1}
    \int_{t_j}^{t_{j+1}} \dual{
      \mathcal E_\mathbb F \operatorname{tr} \bar p -
      \mathbb E_{t_j} P_{j+1},
      \bar U - \bar u
    } \, \mathrm{d}t \quad\text{(by \cref{eq:I2})} \\
    & = \sum_{j=0}^{J-1}
    \int_{t_j}^{t_{j+1}} \dual{
      \mathcal E_\mathbb F \bar p -
      \mathbb E_{t_j} P_{j+1},
      \bar U - \bar u
    } \, \mathrm{d}t \quad\text{(by \cref{eq:trE})} \\
    &\leqslant
    \Big(
      \sum_{j=0}^{J-1} \ssnm{
        \mathcal E_\mathbb F\bar p - \mathbb E_{t_j} P_{j+1}
      }_{
        L^2(t_j,t_{j+1};L^2(\Gamma))
      }^2
    \Big)^{1/2} \ssnm{\bar u - \bar U}_{L^2(0,T;L^2(\Gamma))} \\
    & \leqslant
    \Big(
      \ssnm{(I-\mathcal P_\tau)\mathcal E_\mathbb F\bar p}_{
        L^2(0,T;L^2(\Gamma))
      } + \Big(
        \sum_{j=0}^{J-1} \ssnm{\bar p - P_{j+1}}_{
          L^2(t_j,t_{j+1};L^2(\Gamma))
        }^2
      \Big)^{1/2}
    \Big) \\
    & \qquad\qquad {} \times \ssnm{\bar u - \bar U}_{L^2(0,T;L^2(\Gamma))},
  \end{align*}
  which, together with \cref{eq:barp-P}, implies
  \begin{small}
  \begin{equation}
    \label{eq:I2-esti}
    I_2 \leqslant
    \big(
      C(\tau^{3/4-\epsilon} + h^{3/2-\epsilon}) +
      \ssnm{(I-\mathcal P_\tau)\mathcal E_\mathbb F \bar p}_{
        L^2(0,T;L^2(\Gamma))
      }
    \big) \ssnm{\bar u - \bar U}_{L^2(0,T;L^2(\Gamma))}.
  \end{equation}
  \end{small}

  {\it Step 3.} Combining \cref{eq:700,eq:I0-esti,eq:I1-esti,eq:I2-esti}, we
  conclude that
  \begin{small} 
  \begin{align*}
    & \nu\ssnm{\bar u - \bar U}_{L^2(0,T;L^2(\Gamma))}^2 +
    \ssnm{S_0\mathcal R\bar u - S_0^{h,\tau}\mathcal R\bar U}_{
      L^2(0,T;\dot H^0)
    }^2 \\
    \leqslant{} &
    C \Big( (\tau^{3/4-\epsilon} + h^{3/2-\epsilon})^2 +
    \ssnm{(I-\mathcal P_\tau)\bar u}_{L^2(0,T;L^2(\Gamma))}^2 \\
    & \qquad {} +
    (\tau^{1/4-\epsilon} + h^{1/2-\epsilon})
    \ssnm{\bar u - \bar U}_{L^2(0,T;L^2(\Gamma))} \\
    & \qquad {} +
    \ssnm{(I-\mathcal P_\tau)\mathcal E_\mathbb F\bar p}_{L^2(0,T;L^2(\Gamma))}
    \ssnm{\bar u-\bar U}_{L^2(0,T;L^2(\Gamma))} \Big).
  \end{align*}
  \end{small}
  Using the Young's inequality with $ \epsilon $, we then obtain
  \begin{small}
  \begin{align*}
    & \ssnm{\bar u - \bar U}_{L^2(0,T;L^2(\Gamma))} +
    \ssnm{
      S_0\mathcal R \bar u - S_0^{h,\tau} \mathcal R\bar U
    }_{L^2(0,T;\dot H^0)} \\
    \leqslant{} &
    C\Big(
      \tau^{1/4-\epsilon} + h^{1/2-\epsilon} +
      \ssnm{(I-\mathcal P_\tau)\bar u}_{L^2(0,T;L^2(\Gamma))} +
      \ssnm{(I-\mathcal P_\tau)\mathcal E_\mathbb F\bar p}_{L^2(0,T;L^2(\Gamma))}
    \Big).
  \end{align*}
  \end{small}
  namely inequality \cref{eq:conv-1}. This completes the proof.
\hfill\ensuremath{\blacksquare}

\section{Conclusion}
\label{sec:conclusion}
In this paper we analyzed the discretization of a Neumann boundary control
problem, where the state equation is a linear stochastic parabolic equation with
boundary noise. With rough data, we have established the convergence for general
filtration, and we have derived the convergence rate $ O(\tau^{1/4-\epsilon} +
h^{1/2-\epsilon}) $ for the natural filtration of the $ Q $-Wiener process,
where $ 0 < \epsilon < 1/4 $ can be arbitrarily small.

The numerical analysis in this paper can be extended to the distributed optimal
control problems governed by linear stochastic parabolic equations with additive
noise and general filtration. For the case that the noise is multiplicative or
the coefficient before the $ Q $-Wiener process contains the control variable,
there will be essential difficulty in the numerical analysis, especially for
general filtration. To solve these issues is our ongoing work.

\Acknowledgements{This work was supported by National Natural Science Foundation
of China (Grant No. 11901410) and the Fundamental Research Funds for the Central
Universities in China (Grant No. 2020SCU12063).}




\begin{appendix}
\section{An interpolation space result}
Throughout this section, let $ (\cdot,\cdot)_{1-\theta,2} $, $ 0 < \theta < 1 $,
denote the interpolation space defined by the $ K $-method
(cf.~\cite[Chapter~1]{Lunardi2018}).
\begin{lemma}
  \label{lem:auxi}
  The space
  \[
    L_\mathbb F^2\Big(
      \Omega;L^2\big(
        0,T;\mathcal L_2\big(L_\lambda^2, \dot H^{-\theta}\big)
      \big)
    \Big)
  \]
  is continuously embedded into
  \[
    \Big(
      L_\mathbb F^2\big(
        \Omega;L^2\big(0,T;\mathcal L_2(L_\lambda^2,\dot H^{-1})\big)
      \big), \,
      L_\mathbb F^2\big(
        \Omega;L^2\big(0,T;\mathcal L_2(L_\lambda^2, \dot H^0)\big)
      \big)
    \Big)_{1-\theta,2},
  \]
  where $ 0 < \theta < 1 $.
\end{lemma}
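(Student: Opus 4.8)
The plan is to establish the identity one layer at a time, starting from the purely spatial scale and then lifting it successively through the Hilbert--Schmidt, time and probability factors, and to handle the $\mathbb F$-adaptedness last by a projection argument; throughout one only uses standard properties of the real interpolation method (cf.~\cite[Chapter~1]{Lunardi2018}). The first ingredient is the spatial identity
\[
  \dot H^{-\theta} = \big( \dot H^{-1}, \dot H^0 \big)_{1-\theta,2}
  \quad\text{with equivalent norms.}
\]
Since $-A = I - \Delta$ (with homogeneous Neumann boundary condition) is a strictly positive self-adjoint operator on $\dot H^0$, and $\dot H^\gamma$ coincides, for every $\gamma \in \mathbb R$, with the corresponding space of the fractional power scale of $-A$ (extrapolated to negative exponents), so that $\dot H^0 \hookrightarrow \dot H^{-\theta} \hookrightarrow \dot H^{-1}$ densely and continuously, this is the interpolation property of fractional powers of a positive self-adjoint operator; it may also be read off directly from the diagonalization of $-A$ in its eigenbasis together with the explicit form of the $K$-functional on the resulting sequence space.

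Next I would lift this identity simultaneously through $\mathcal L_2(L_\lambda^2,\cdot)$, $L^2(0,T;\cdot)$ and $L^2(\Omega;\cdot)$. The orthonormal basis $\{\sqrt{\lambda_n}\,\phi_n\}_{n\geqslant 0}$ of $L_\lambda^2$ identifies $\mathcal L_2(L_\lambda^2, X)$ isometrically with $\ell^2(\mathbb N;X)$, so that by Fubini's theorem for Bochner spaces
\[
  L^2\big( \Omega; L^2(0,T;\mathcal L_2(L_\lambda^2, X)) \big)
  \cong L^2(\mathcal M; X)
\]
uniformly in the Hilbert space $X$, where $\mathcal M$ denotes the product of $(\Omega,\mathcal F,\mathbb P)$, of $(0,T)$ with Lebesgue measure, and of $\mathbb N$ with counting measure. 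Since the real interpolation method with second exponent $2$ commutes with $L^2$ of a vector-valued function space,
\[
  \big( L^2(\mathcal M;X_0), L^2(\mathcal M;X_1) \big)_{1-\theta,2}
  = L^2\big( \mathcal M; (X_0,X_1)_{1-\theta,2} \big),
\]
applying the spatial identity with $(X_0,X_1) = (\dot H^{-1},\dot H^0)$ shows that the interpolation space of the two \emph{non-adapted} endpoint spaces coincides, up to norm equivalence, with $L^2(\Omega;L^2(0,T;\mathcal L_2(L_\lambda^2,\dot H^{-\theta})))$.

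It remains to restore the filtration, which I would do with the orthogonal projection $\mathcal E_\mathbb F$. Progressive measurability is a condition on the $(\omega,t)$-variables alone, hence $\mathcal E_\mathbb F$ acts coordinatewise with respect to any spatial orthonormal basis; consequently it is, on each of the two non-adapted endpoint spaces, the orthogonal projection onto the corresponding $\mathbb F$-subspace, it has norm at most $1$ there, and it is given by one and the same formula on their intersection. Given $f$ in $L_\mathbb F^2(\Omega;L^2(0,T;\mathcal L_2(L_\lambda^2,\dot H^{-\theta})))$ and any decomposition $f = a + b$ in the non-adapted couple, the decomposition $f = \mathcal E_\mathbb F a + \mathcal E_\mathbb F b$ lies in the adapted couple with no increase of either endpoint norm; passing to the infimum shows that the $K$-functional of $f$ relative to the adapted couple is dominated by that relative to the non-adapted couple, which is exactly the asserted continuous embedding (and, combined with the trivial embedding of the adapted sub-couple into the non-adapted one, in fact a norm equivalence). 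I expect the only genuinely delicate point --- and the natural place for a slip --- to be this last one: verifying that $\mathcal E_\mathbb F$ really is a bounded operator on each of $L^2(\Omega;L^2(0,T;\mathcal L_2(L_\lambda^2,\dot H^{-1})))$ and $L^2(\Omega;L^2(0,T;\mathcal L_2(L_\lambda^2,\dot H^0)))$ that agrees on their intersection, so that the $K$-functional comparison above is legitimate; once this structural observation is in place, the remaining steps are routine.
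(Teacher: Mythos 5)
Your proposal is correct, but it reaches the embedding by a genuinely different route than the paper. The paper works directly with the $K$-functional at the Hilbert--Schmidt level: it proves by an explicit computation that $\mathcal L_2(L_\lambda^2,(\dot H^{-1},\dot H^0)_{1-\theta,2})$ embeds into $\big(\mathcal L_2(L_\lambda^2,\dot H^{-1}),\mathcal L_2(L_\lambda^2,\dot H^0)\big)_{1-\theta,2}$, which requires the auxiliary \cref{lem:tmp} to interchange the infimum over decompositions $B=B_0+B_1$ with the sum over the basis $\{\phi_n\}$; it then invokes the vector-valued $L^2$ interpolation identity (Lunardi, p.~38) \emph{directly on the adapted spaces} $L_\mathbb F^2(\Omega;L^2(0,T;\cdot))$ --- which is legitimate precisely because $L_\mathbb F^2(\Omega;L^2(0,T;X))$ is itself an honest $L^2(\mathcal M;X)$ space over $\Omega\times(0,T)$ equipped with the progressive $\sigma$-algebra, so no separate treatment of adaptedness is needed. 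You instead absorb the Hilbert--Schmidt layer into the measure space via the isometric, natural identification $\mathcal L_2(L_\lambda^2,X)\cong\ell^2(\mathbb N;X)$ and apply the same vector-valued $L^2$ interpolation theorem once to the non-adapted couple; this is cleaner and even yields equality (not just an embedding) at that stage, dispensing with the paper's $K$-functional computation and with \cref{lem:tmp} entirely. The price is that you must then restore the filtration by hand, which you do with the retraction argument via $\mathcal E_\mathbb F$; that step is sound --- the point you flag as delicate is genuinely the crux, and the cleanest justification that $\mathcal E_\mathbb F$ is a norm-one projection on both endpoint spaces agreeing on their intersection is to recognize it as the conditional expectation with respect to the progressive $\sigma$-algebra (tensored with the identity on the value space), which is consistent across the scale $\dot H^{-1}\supset\dot H^{-\theta}\supset\dot H^0$. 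Had you made the same observation that $L_\mathbb F^2$ is an $L^2$ space over the progressive $\sigma$-algebra, you could have skipped the projection step altogether, as the paper implicitly does.
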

\begin{proof}
  For any $ B \in \mathcal L_2\big( L_\lambda^2, \, (\dot H^{-1}, \dot
  H^0)_{1-\theta,2} \big) $, by definition we have
  \begin{align*} 
  & \nm{B}_{
    \big(
      \mathcal L_2(L_\lambda^2, \dot H^{-1}), \,
      \mathcal L_2(L_\lambda^2, \dot H^0)
    \big)_{1-\theta,2}
  }^2 \\
  ={} &
  \int_0^\infty t^{2\theta-3} \left(
    \inf_{
      \substack{
        B = B_0 + B_1 \\
        B_0 \in \mathcal L_2(L_\lambda^2, \dot H^{-1}) \\
        B_1 \in \mathcal L_2(L_\lambda^2, \dot H^0)
      }
      } \Big(
      \nm{B_0}_{
        \mathcal L_2(L_\lambda^2, \dot H^{-1})
        } + t \nm{B_1}_{
        \mathcal L_2(L_\lambda^2, \dot H^0)
      }
    \Big)
  \right)^2 \, \mathrm{d}t \\
  \leqslant{} &
  2\int_0^\infty t^{2\theta-3} \inf_{
    \substack{
      B = B_0 + B_1 \\
      B_0 \in \mathcal L_2(L_\lambda^2, \dot H^{-1}) \\
      B_1 \in \mathcal L_2(L_\lambda^2, \dot H^0)
    }
    } \Big(
    \nm{B_0}_{
      \mathcal L_2(L_\lambda^2, \dot H^{-1})
      }^2 + t^2 \nm{B_1}_{
      \mathcal L_2(L_\lambda^2, \dot H^0)
    }^2
  \Big) \, \mathrm{d}t \\
  ={} &
  2\int_0^\infty t^{2\theta-3} \inf_{
    \substack{
      B = B_0 + B_1 \\
      B_0 \in \mathcal L_2(L_\lambda^2, \dot H^{-1}) \\
      B_1 \in \mathcal L_2(L_\lambda^2, \dot H^0)
    }
    } \sum_{n=0}^\infty \lambda_n \Big(
    \nm{B_0\phi_n}_{ \dot H^{-1} }^2 +
    t^2 \nm{B_1\phi_n}_{ \dot H^0 }^2
  \Big) \, \mathrm{d}t \\
  ={} & 2 \int_0^\infty t^{2\theta-3}
  \sum_{n=0}^\infty \lambda_n \inf_{
    \substack{
      B\phi_n = v_0 + v_1 \\
      v_0 \in \dot H^{-1} \\
      v_1 \in \dot H^0
    }
  } \Big( \nm{v_0}_{\dot H^{-1}}^2 + t^2 \nm{v_1}_{\dot H^0}^2 \Big) \, \mathrm{d}t
  \quad\text{(by \cref{lem:tmp})} \\
  \leqslant{} &
  2\int_0^\infty t^{2\theta-3}
  \sum_{n=0}^\infty \lambda_n \left(
    \inf_{
      \substack{
        B\phi_n = v_0 + v_1 \\
        v_0 \in \dot H^{-1} \\
        v_1 \in \dot H^0
      }
    } \Big( \nm{v_0}_{\dot H^{-1}} + t \nm{v_1}_{\dot H^0} \Big)
  \right)^2 \, \mathrm{d}t \\
  ={} &
  2\sum_{n=0}^\infty \lambda_n \int_0^\infty t^{2\theta-3} \left(
    \inf_{
      \substack{
        B\phi_n = v_0 + v_1 \\
        v_0 \in \dot H^{-1} \\
        v_1 \in \dot H^0
      }
    } \Big( \nm{v_0}_{\dot H^{-1}} + t \nm{v_1}_{\dot H^0} \Big)
  \right)^2 \, \mathrm{d}t \\
  ={} &
  2\sum_{n=0}^\infty \lambda_n \nm{B\phi_n}_{(\dot H^{-1}, \dot H^0)_{1-\theta,2}}^2 \\
  ={} &
  2\nm{B}_{
    \mathcal L_2(L_\lambda^2, \, (\dot H^{-1}, \dot H^0)_{1-\theta, 2})
  }^2.
\end{align*}
This implies that $ \mathcal L_2(L_\lambda^2, (\dot H^{-1}, \dot H^0)
_{1-\theta,2}) $ is continuously embedded into
\[
  \big(
    \mathcal L_2(L_\lambda^2, \dot H^{-1}), \,
    \mathcal L_2(L_\lambda^2, \dot H^0)
  \big)_{1-\theta,2}.
\]
Since $ \dot H^{-\theta} = (\dot H^{-1}, \dot H^0)_{1-\theta,2} $ with
equivalent norms, we readily conclude that $ \mathcal L_2(L_\lambda^2,
\dot H^{-\theta}) $ is continuously embedded into
\[
  \big(
    \mathcal L_2(L_\lambda^2, \dot H^{-1}), \,
    \mathcal L_2(L_\lambda^2, \dot H^0)
  \big)_{1-\theta,2}.
\]
Therefore, the desired claim follows from the result that
(cf.~\cite[p.~38]{Lunardi2018})
\[
  L_\mathbb F^2\bigg(
    \Omega;L^2\Big(
      0,T; \big(
        \mathcal L_2(L_\lambda^2, \dot H^{-1}), \,
        \mathcal L_2(L_\lambda^2, \dot H^0)
      \big)_{1-\theta,2}
    \Big)
  \bigg)
\]
is identical to
\[
  \bigg(
    L_\mathbb F^2\Big(
      \Omega;L^2\big(0,T;\mathcal L_2(L_\lambda^2,\dot H^{-1})\big)
    \Big), \,
    L_\mathbb F^2\Big(
      \Omega;L^2\big(0,T;\mathcal L_2(L_\lambda^2, \dot H^0)\big)
    \Big)
  \bigg)_{1-\theta,2},
\]
with equivalent norms. This completes the proof.
\end{proof}

\begin{lemma}
  \label{lem:tmp}
  For any $ t > 0 $ and $ B \in \mathcal L_2(L_\lambda^2, (\dot H^{-1}, \dot H^0)_{1-\theta,2})
  $ with $ 0 < \theta < 1 $, we have
  \begin{equation}
    \begin{aligned}
      & \inf_{
        \substack{
          B = B_0 + B_1 \\
          B_0 \in \mathcal L_2(L_\lambda^2, \dot H^{-1}) \\
          B_1 \in \mathcal L_2(L_\lambda^2, \dot H^0)
        }
        } \sum_{n=0}^\infty \lambda_n \Big(
        \nm{B_0\phi_n}_{ \dot H^{-1} }^2 + t^2 \nm{B_1\phi_n}_{ \dot H^0 }^2
      \Big) \\
      ={} &
      \sum_{n=0}^\infty \lambda_n
      \inf_{\substack{B\phi_n = v_0 + v_1 \\ v_0 \in \dot H^{-1} \\ v_1 \in \dot
        H^0}} \Big(
        \nm{v_0}_{\dot H^{-1}}^2 + t^2 \nm{v_1}_{\dot H^0}^2
      \Big).
    \end{aligned}
  \end{equation}
\end{lemma}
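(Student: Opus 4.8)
The plan is to recognize that the right-hand side of the asserted identity is a squared Hilbert--Schmidt norm of $B$ computed through a single auxiliary quadratic norm on $\dot H^{-1}$, and then to exhibit, at the operator level, a decomposition $B = B_0 + B_1$ that is pointwise optimal. Concretely, for fixed $t > 0$ I would introduce
\[
  \kappa_t(x)^2 := \inf\big\{
    \nm{v_0}_{\dot H^{-1}}^2 + t^2 \nm{v_1}_{\dot H^0}^2 :
    x = v_0 + v_1,\ v_0 \in \dot H^{-1},\ v_1 \in \dot H^0
  \big\}
\]
for $x \in \dot H^{-1}$, so that the right-hand side of the claimed identity is exactly $\sum_{n} \lambda_n \kappa_t(B\phi_n)^2$.

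Using the eigenpairs $\{(\varphi_m, r_m)\}$ of $-A$ (as in the proof of \cref{thm:conv}) and writing $x = \sum_m \hat x_m \varphi_m$, the minimization over $v_1$ decouples across modes: for each $m$ the scalar map $s \mapsto r_m^{-1}(\hat x_m - s)^2 + t^2 s^2$ is minimized at $s = (1 + r_m t^2)^{-1}\hat x_m$ with value $t^2(1 + r_m t^2)^{-1}\hat x_m^2$. Hence the infimum defining $\kappa_t$ is attained, $\kappa_t(x)^2 = \sum_m \frac{t^2}{1 + r_m t^2}\hat x_m^2$, and the minimizing pair is $v_1 = Mx$, $v_0 = (I - M)x$, where $M$ is the operator acting in the spectral representation by the multiplier $m \mapsto (1 + r_m t^2)^{-1}$. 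Since $r_m \to \infty$, the estimate $(1 + r_m t^2)^{-2} \leqslant C r_m^{-1}$ holds, which gives $M \in \mathcal L(\dot H^{-1}, \dot H^0)$; and $I - M \in \mathcal L(\dot H^{-1}, \dot H^{-1})$ because its multiplier $\frac{r_m t^2}{1 + r_m t^2}$ is bounded by $1$.

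With this in hand the proof reduces to two inequalities. The inequality ``$\geqslant$'' is immediate: for any admissible decomposition $B = B_0 + B_1$ and every $n$, the splitting $B\phi_n = B_0\phi_n + B_1\phi_n$ competes in the definition of $\kappa_t(B\phi_n)$, so $\nm{B_0\phi_n}_{\dot H^{-1}}^2 + t^2\nm{B_1\phi_n}_{\dot H^0}^2 \geqslant \kappa_t(B\phi_n)^2$; multiplying by $\lambda_n$, summing over $n$, and taking the infimum over decompositions yields the left-hand side $\geqslant \sum_n \lambda_n \kappa_t(B\phi_n)^2$. For ``$\leqslant$'', I would take the explicit competitor $B_1 := MB$ and $B_0 := (I - M)B$. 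Since $\dot H^0$ embeds continuously into $\dot H^{-1}$, the sum space $\dot H^{-1} + \dot H^0$ coincides with $\dot H^{-1}$, and the interpolation space $(\dot H^{-1}, \dot H^0)_{1-\theta,2}$ embeds continuously into it, whence $B \in \mathcal L_2(L_\lambda^2, \dot H^{-1})$; composing the Hilbert--Schmidt operator $B$ with the bounded operators $M$ and $I - M$ preserves the Hilbert--Schmidt property, so $B_1 \in \mathcal L_2(L_\lambda^2, \dot H^0)$ and $B_0 \in \mathcal L_2(L_\lambda^2, \dot H^{-1})$, i.e.\ the pair $(B_0, B_1)$ is admissible. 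For each $n$ the pair $(B_0\phi_n, B_1\phi_n) = ((I-M)B\phi_n,\, MB\phi_n)$ is precisely the minimizing pair for $B\phi_n$, so $\nm{B_0\phi_n}_{\dot H^{-1}}^2 + t^2\nm{B_1\phi_n}_{\dot H^0}^2 = \kappa_t(B\phi_n)^2$; summing against $\lambda_n$ shows the left-hand side $\leqslant \sum_n \lambda_n \kappa_t(B\phi_n)^2$, and the two inequalities combine to give the claim.

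The only genuinely delicate point, which I would spell out with care, is that the pointwise-optimal selection assembles into bona fide Hilbert--Schmidt operators: the minimizer map $x \mapsto v_1$ must be recognized as the \emph{bounded linear} operator $M \colon \dot H^{-1} \to \dot H^0$, so that $MB$ and $(I-M)B$ inherit the Hilbert--Schmidt property of $B$. The scalar minimization, the attainment of the infimum, and the ``$\geqslant$'' direction are routine; it is also worth noting at the outset that $B \in \mathcal L_2(L_\lambda^2, \dot H^{-1})$ has finite norm, so all series involved converge and no ambiguity of the form $\infty = \infty$ can occur.
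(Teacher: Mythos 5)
Your proof is correct, but it takes a genuinely different route from the paper's. The paper never computes the pointwise infimum: it takes \emph{arbitrary} admissible decompositions $B\phi_n = v_{0,n} + v_{1,n}$ subject only to the finiteness of $\sum_n \lambda_n\big(\nm{v_{0,n}}_{\dot H^{-1}}^2 + t^2\nm{v_{1,n}}_{\dot H^0}^2\big)$, observes that this finiteness is exactly what is needed for the assignments $B_0\phi_n := v_{0,n}$ and $B_1\phi_n := v_{1,n}$ to define bona fide Hilbert--Schmidt operators with $B = B_0 + B_1$, and then passes to the infimum over all such sequences. You instead identify the pointwise minimizer explicitly through the spectral decomposition of $-A$ --- the multiplier $M$ with symbol $(1+r_m t^2)^{-1}$ --- and take $B_1 = MB$, $B_0 = (I-M)B$, using the boundedness of $M$ and $I-M$ to transfer the Hilbert--Schmidt property from $B$. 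Both arguments are sound. The paper's is more elementary and would work verbatim for any couple of Hilbert spaces (it does not use that the two norms are simultaneously diagonalized by a common eigenbasis); yours buys more --- attainment of the infimum, the explicit formula $\kappa_t(x)^2 = \sum_m t^2(1+r_m t^2)^{-1}\hat x_m^2$ for the $K$-functional, and the fact that the optimal decomposition is realized by a bounded linear selection --- at the cost of invoking the spectral theory of $A$. Two cosmetic remarks: the bound $(1+r_m t^2)^{-2} \leqslant \tfrac{1}{4t^2}\, r_m^{-1}$ is just the elementary inequality $(1+a)^2 \geqslant 4a$ and does not need $r_m \to \infty$; and the operator norm of $M$ depends on $t$, which is harmless here since $t$ is fixed throughout.
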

\begin{proof}
  By the evident inequality
  \begin{equation*}
    \begin{aligned}
      & \inf_{
        \substack{
          B = B_0 + B_1 \\
          B_0 \in \mathcal L_2(L_\lambda^2, \dot H^{-1}) \\
          B_1 \in \mathcal L_2(L_\lambda^2, \dot H^0)
        }
        } \sum_{n=0}^\infty \lambda_n \Big(
        \nm{B_0\phi_n}_{ \dot H^{-1} }^2 + t^2 \nm{B_1\phi_n}_{ \dot H^0 }^2
      \Big) \\
      \geqslant{} &
      \sum_{n=0}^\infty \lambda_n
      \inf_{\substack{B\phi_n = v_0 + v_1 \\ v_0 \in \dot H^{-1} \\ v_1 \in \dot
        H^0}} \Big(
        \nm{v_0}_{\dot H^{-1}}^2 + t^2 \nm{v_1}_{\dot H^0}^2
      \Big),
    \end{aligned}
  \end{equation*}
  it remains only to prove
  \begin{equation}
    \label{eq:tmp}
    \begin{aligned}
      & \inf_{
        \substack{
          B = B_0 + B_1 \\
          B_0 \in \mathcal L_2(L_\lambda^2, \dot H^{-1}) \\
          B_1 \in \mathcal L_2(L_\lambda^2, \dot H^0)
        }
        } \sum_{n=0}^\infty \lambda_n \Big(
        \nm{B_0\phi_n}_{ \dot H^{-1} }^2 + t^2 \nm{B_1\phi_n}_{ \dot H^0 }^2
      \Big) \\
      \leqslant{} &
      \sum_{n=0}^\infty \lambda_n
      \inf_{\substack{B\phi_n = v_0 + v_1 \\ v_0 \in \dot H^{-1} \\ v_1 \in \dot
        H^0}} \Big(
        \nm{v_0}_{\dot H^{-1}}^2 + t^2 \nm{v_1}_{\dot H^0}^2
      \Big).
    \end{aligned}
  \end{equation}
  To this end, we proceed as follows. Let $ \{v_{0,n} \mid n \in \mathbb N \}
  \subset \dot H^{-1} $ and $ \{v_{1,n}
    \mid n \in
  \mathbb N \} \subset \dot H^0 $ be arbitrary such that
  \[
    B\phi_n = v_{0,n} + v_{1,n}  \quad \forall n \in \mathbb N
  \]
  and that
  \begin{equation}
    \label{eq:love}
    \sum_{n=0}^\infty \lambda_n \big(
      \nm{v_{0,n}}_{\dot H^{-1}}^2 +
      t^2 \nm{v_{1,n}}_{\dot H^0}^2
    \big) < \infty.
  \end{equation}
  Define $ B_0 \in \mathcal L_2(L_\lambda^2, \dot H^{-1}) $ by
  \[
    B_0 \phi_n := v_{0,n} \quad \forall n \in \mathbb N,
  \]
  and define $ B_1 \in \mathcal L_2(L_\lambda^2, \dot H^0) $ by
  \[
    B_1 \phi_n := v_{1,n} \quad \forall n \in \mathbb N.
  \]
  By \cref{eq:love}, it is easily verified that the above $ B_0 $ and $ B_1 $
  are both well-defined. Clearly, we have
  \[
    B = B_0 + B_1
  \]
  and
  \[
    \sum_{n=0}^\infty \lambda_n \Big(
      \nm{B_0 \phi_n}_{\dot H^{-1}}^2 +
      t^2 \nm{B_1 \phi_n}_{\dot H^0}^2
    \Big) = \sum_{n=0}^\infty \lambda_n \Big(
    \nm{v_{0,n}}_{\dot H^{-1}}^2 + t^2 \nm{v_{1,n}}_{\dot H^0}^2
  \Big).
  \]
  Because of the arbitrary choices of $ \{v_{0,n}\mid n \in \mathbb N\} $ and $
  \{v_{1,n}\mid n \in \mathbb N\} $, we readily conclude \cref{eq:tmp}. This
  completes the proof.
\end{proof}

\end{appendix}

\end{document}